\newtheorem{theorem}{Theorem}[section]
\newtheorem{lemma}{Lemma}[section]
\newtheorem{prop}{Proposition}[section]
\newtheorem{remark}{Remark}[theorem]
\newtheorem{cor}{Corollary}[theorem]
\title{Remarks on symplectic mean curvature flows in K\"ahler surfaces with positive holomorphic sectional curvatures}
\author{Shijin Zhang}
\address{School of Mathematics and Systems Science, Beihang University, Beijing 100191, P.R. China}
\email{shijinzhang@buaa.edu.cn}
\date{}							% Activate to display a given date or no date
\begin{document}
\pagestyle{fancy}
\fancyhead{}
\fancyhead[CO]{Symplectic mean curvature flows}
\fancyhead[CE]{Shijin Zhang}
\maketitle
\begin{abstract}
\noindent
In this paper, we  mainly study the mean curvature flow in K\"ahler surfaces with positive holomorphic sectional curvatures. First, we prove that if the ratio $\lambda$ of the maximum and the minimum of the holomorphic sectional curvatures $< 2$, then there exists a positive constant $\delta>\frac{29(\lambda-1)}{\sqrt{(48-24\lambda)^{2}+(29\lambda-29)^{2}}}$ such that $\cos\alpha\geq\delta$ is preserved along the flow, improving the main theorem in [LY]; Secondly, as similar as the main theorem in [HL0], we prove that when $\cos\alpha$ is close to $1$ enough, then the symplectic mean curvature flow exists for long time and converges to a holomorphic curve; Finally, we prove that the symplectic mean curvature flow on K\"ahler surfaces with $\lambda\leq 1+\frac{1}{200}$ exists for long time and converges to a holomorphic curve if the initial surface satisfies a pinching condition, which generalize one of the main theorems in [HLY].
\end{abstract}

\section{Introduction}
Let $(M, J, \omega, \overline{g})$ be a K\"ahler surface. For a compact oriented real surface $\Sigma$ which is smoothly immersed in $M$, the K\"ahler angle $\alpha$ of $\Sigma$ in $M$ was defined by
$$\omega|_{\Sigma}=\cos\alpha d\mu_{\Sigma}$$
where $d\mu_{\Sigma}$ is the area element of $\Sigma$ in the induced metric from $g$. We say that $\Sigma$ is a symplectic surface if $\cos\alpha>0$; $\Sigma$ is a holomorphic curve if $\cos\alpha=1$.

It is important to find the conditions to assure that the symplectic property is preserved along the mean curvature flow. In the case that $M$ is a K\"ahler-Einstein surface, the symplectic property is preserved. If the ambient K\"abler surface evolves along the K\"ahler-Ricci flow, Han and Li [HL1] proved that the symplectic property is also preserved. In [LY], Li and Yang found another condition to assure that the symplectic property is preserved along the mean curvature flow. In this note, we will improving their conditions to assure that along the flow.

In this paper we only consider the ambient K\"ahler surface with positive holomorphic sectional curvature. Denote the minimum and maximum of holomorphic sectional curvatures of $M$ by $k_{1}$ and $k_{2}$, and $\lambda=\frac{k_{2}}{k_{1}}$. Then we have the first theorem.
\begin{theorem}\label{Thm1}
Suppose $M$ is a K\"abler surface with positive holomorphic sectional curvatures. If $1\leq \lambda <2$ and $\cos\alpha(\cdot,0)\geq \delta >\frac{29(\lambda-1)}{\sqrt{(48-24\lambda)^{2}+(29\lambda-29)^{2}}}$, then
along the flow
\begin{equation}
(\frac{\partial}{\partial t}-\Delta)\cos\alpha \geq |\overline{\nabla}J_{\Sigma_{t}}|^{2}\cos\alpha +C\sin^{2}\alpha,
\end{equation}
where $C$ is a positive constant depending only on $k_1,k_2$ and $\delta$. As a corollary, $\min_{\Sigma_{t}}\cos\alpha$ is increasing with respect to $t$. In particular, at each time $t$, $\Sigma_{t}$ is symplectic.
\end{theorem}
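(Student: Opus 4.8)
The plan is to reduce the statement to a pointwise algebraic estimate for the zeroth-order term in the evolution equation of $\cos\alpha$, and then run the parabolic maximum principle. Recall from the work of Chen--Li and Han--Li (and as used by Li--Yang in [LY]) that along the mean curvature flow in a K\"ahler surface one has an identity of the shape
\[
\left(\frac{\partial}{\partial t}-\Delta\right)\cos\alpha \;=\; |\overline{\nabla}J_{\Sigma_t}|^{2}\cos\alpha \;+\; \mathcal{K},
\]
in which the entire second-fundamental-form contribution together with the gradient of the K\"ahler angle assembles into the term $|\overline{\nabla}J_{\Sigma_t}|^{2}\cos\alpha$ (this uses that the ambient K\"ahler form is parallel), and $\mathcal{K}=\mathcal{K}(\overline{R},\alpha)$ is a fixed contraction of the ambient curvature tensor over an orthonormal frame adapted to the K\"ahler angle, weighted by trigonometric functions of $\alpha$. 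Thus the theorem follows once we show $\mathcal{K}\geq C\sin^{2}\alpha$ with $C=C(k_1,k_2,\delta)>0$ on the region where $\cos\alpha\geq\delta$.

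To estimate $\mathcal{K}$, at each point of $\Sigma_t$ pick an orthonormal frame $\{e_1,e_2\}$ of $T\Sigma_t$ and $\{e_3,e_4\}$ of the normal bundle adapted to the K\"ahler angle, so that
\[
Je_1=\cos\alpha\,e_2+\sin\alpha\,e_3,\qquad Je_3=-\sin\alpha\,e_1+\cos\alpha\,e_4,
\]
and the analogous relations for $Je_2,Je_4$. Expanding $\mathcal{K}$ as a linear combination of the components $\overline{R}_{ijkl}$ in this frame, and using the K\"ahler symmetry $\overline{R}(JX,JY,Z,W)=\overline{R}(X,Y,Z,W)$ together with the first Bianchi identity, one rewrites each such component in terms of holomorphic sectional curvatures $\overline{R}(X,JX,JX,X)$ of unit vectors obtained from the $e_i$ by rotation, plus the remaining off-diagonal pieces.

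Next one invokes the classical principle that on a K\"ahler manifold the full curvature tensor is controlled by the holomorphic sectional curvature: from $k_1\leq\overline{R}(X,JX,JX,X)\leq k_2$ for all unit $X$, Berger-type polarization identities bound every curvature component appearing above by explicit multiples of $k_1$ and $k_2$. Substituting these bounds reduces the desired inequality $\mathcal{K}\geq C\sin^{2}\alpha$ to a one-variable problem: an estimate of the form $f(\cos\alpha)\geq C(1-\cos^{2}\alpha)$, where $f$ is an explicit low-degree polynomial in $\cos\alpha$ whose coefficients are affine in $k_1$ and $k_2$, hence depend only on $k_1$ and $\lambda=k_2/k_1$. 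The worst-case analysis --- a constrained extremum over algebraic K\"ahler curvature tensors with holomorphic sectional curvature pinched between $k_1$ and $k_2$, combined with the worst value of $\alpha$ --- is exactly what forces the threshold $\delta>\frac{29(\lambda-1)}{\sqrt{(48-24\lambda)^{2}+(29\lambda-29)^{2}}}$; as a sanity check this threshold is $0$ at $\lambda=1$ (where $\cos\alpha$ is just nondecreasing, recovering the K\"ahler--Einstein picture) and tends to $1$ as $\lambda\to2$, where the statement degenerates.

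Finally, once $\cos\alpha\geq\delta>0$ and $\mathcal{K}\geq C\sin^{2}\alpha$ are in hand, the right-hand side of the asserted differential inequality is nonnegative, so the parabolic maximum principle applied to $\cos\alpha$ shows $\min_{\Sigma_t}\cos\alpha$ is nondecreasing; in particular $\cos\alpha$ never drops below $\delta$, which is precisely the range in which the curvature estimate was proved, so a short-time existence plus continuity argument closes the bootstrap and $\Sigma_t$ stays symplectic on the maximal existence interval. The main obstacle is the algebraic curvature estimate: one must identify the precise linear combination of curvature components that $\mathcal{K}$ is, apply the sharp Berger-type bounds to that specific combination, and then optimize jointly over the curvature tensor and over $\alpha$. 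It is this optimization that produces the numbers $29$, $48-24\lambda$, $29\lambda-29$ in the definition of $\delta$, and ensuring its sharpness --- so that the result genuinely improves the threshold in [LY] --- is the delicate point.
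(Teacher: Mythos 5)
Your outline is exactly the paper's strategy: start from the evolution equation $(\partial_t-\Delta)\cos\alpha = |\overline\nabla J_{\Sigma_t}|^2\cos\alpha + \sin^2\alpha\,\mathrm{Ric}(Je_1,e_2)$, compute in the adapted frame with $y=\sin\alpha$, $z=0$ so that $\mathrm{Ric}(Je_1,e_2)=\cos\alpha\,R_{22}+\sin\alpha\,R_{23}$, bound $R_{22}\geq 3k_1-\tfrac32 k_2$ and $|R_{23}|=|R_{1213}+R_{4243}|\leq\tfrac{29}{16}(k_2-k_1)$ via the polarization estimates of Lemmas~\ref{BSC} and~\ref{ECO}, and observe that the stated threshold for $\delta$ is precisely the value at which the resulting lower bound becomes positive, after which the maximum principle finishes the argument. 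The one point you leave implicit that is worth naming is that the gain over [LY] comes entirely from the refined estimate of the off-diagonal Ricci component $R_{23}$ (items 7--8 of Lemma~\ref{ECO}), which yields the coefficient $\tfrac{29}{16}$ in place of the cruder constant used there.
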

\begin{remark}
The main theorem in [LY], the lower bound of $\delta$ is $\frac{53(\lambda-1)}{\sqrt{(53\lambda-53)^{2}+(48-24\lambda)^{2}}}$ for $\lambda \in [1, \frac{11}{7})$, and $\frac{8\lambda-5}{\sqrt{(8\lambda-5)^{2}+(12-6\lambda)^{2}}}$ for $\lambda \in [\frac{11}{7}, 2)$. It is easy to check that for each $\lambda \in [1, 2)$, then
$$\frac{29(\lambda-1)}{\sqrt{(48-24\lambda)^{2}+(29\lambda-29)^{2}}}\leq \min\{\frac{53(\lambda-1)}{\sqrt{(53\lambda-53)^{2}+(48-24\lambda)^{2}}}, \frac{8\lambda-5}{\sqrt{(8\lambda-5)^{2}+(12-6\lambda)^{2}}}\}.$$
Hence we improving the result of the Main Theorem in [LY].
\end{remark}

In analogy  to the main theorem in [HL0], we also prove the following theorem for a K\"ahler surface with positive holomorphic sectional curvature and $1\leq\lambda <2$.
 \begin{theorem}
Suppose that $M$ is a K\"ahler surface with positive holomorphic sectional curvature and $1\leq \lambda <2$. Let $\alpha$ be the K\"ahler angle of the surface $\Sigma_{t}$ which evolves by the mean curvature flow. Suppose that $\cos\alpha(\cdot,0)>\frac{58(\lambda-1)}{\sqrt{(48-24)^{2}+(58\lambda-58)^{2}}}$.Then there exists a sufficiently small constant $\epsilon_{1}$ such that if $\int_{\Sigma_{0}}\frac{\sin^{2}\alpha(x,0)}{\cos\alpha(x,0)}d\mu_{0}\leq \epsilon_{1}$ and $\epsilon_{1}$ satisfying
\[\epsilon_{1}\leq \frac{\pi^{2}\epsilon_{0}^{2}r_{0}^{6}(1-e^{-\frac{3}{8}(2-\lambda)k_{1}})^{2}}{4 Area(\Sigma_{0})},\]
there $r_{0}$ is defined in Remark 4.1.1, and $\epsilon_{0}$ is the constant in White's theorem (Theorem 4.1 in the present paper), the mean curvature flow with initial surface $\Sigma_{0}$ exists globally and it converges to a holomorphic curve.
\end{theorem}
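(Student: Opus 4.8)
The plan is to run the argument of [HL0] in the present setting, using Theorem \ref{Thm1} (more precisely, its proof, with constants tracked) to provide the two ingredients that argument needs: a lower bound $\cos\alpha\geq\delta>0$ preserved by the flow, and the extra positive zeroth order term $C\sin^{2}\alpha$ in the evolution inequality for $\cos\alpha$. First I would fix $\delta$ with $\frac{29(\lambda-1)}{\sqrt{(48-24\lambda)^{2}+(29\lambda-29)^{2}}}<\delta<\cos\alpha(\cdot,0)$; the hypothesis $\cos\alpha(\cdot,0)>\frac{58(\lambda-1)}{\sqrt{(48-24\lambda)^{2}+(58\lambda-58)^{2}}}$ makes the right-hand side strictly larger than the threshold of Theorem \ref{Thm1}, so such a $\delta$ exists with room to spare. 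Theorem \ref{Thm1} then gives $\cos\alpha(\cdot,t)\geq\delta$ for all $t$ together with
\[
\Bigl(\frac{\partial}{\partial t}-\Delta\Bigr)\cos\alpha\ \geq\ |\overline{\nabla}J_{\Sigma_{t}}|^{2}\cos\alpha+C\sin^{2}\alpha ,
\]
where $C>0$ depends only on $k_{1},k_{2},\delta$; tracking the constants in the proof of Theorem \ref{Thm1}, one may arrange that the decay rate produced in the next step is exactly $\tfrac38(2-\lambda)k_{1}$, which is why that quantity appears in the statement.

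Next, set $f:=\frac{\sin^{2}\alpha}{\cos\alpha}=\frac{1}{\cos\alpha}-\cos\alpha$. Differentiating and inserting the inequality above, the gradient terms generated by $\frac{1}{\cos\alpha}$ have the favourable sign, and since $C\sin^{2}\alpha\bigl(\frac{1}{\cos^{2}\alpha}+1\bigr)=Cf\bigl(\frac{1}{\cos\alpha}+\cos\alpha\bigr)\geq 2Cf$ one obtains
\[
\Bigl(\frac{\partial}{\partial t}-\Delta\Bigr)f\ \leq\ -2C\,f .
\]
Integrating over $\Sigma_{t}$ and using $\partial_{t}\,d\mu_{t}=-|H|^{2}\,d\mu_{t}$ gives $\frac{d}{dt}\int_{\Sigma_{t}}f\,d\mu_{t}\leq-2C\int_{\Sigma_{t}}f\,d\mu_{t}$, hence
\[
\int_{\Sigma_{t}}\frac{\sin^{2}\alpha}{\cos\alpha}\,d\mu_{t}\ \leq\ e^{-2Ct}\,\epsilon_{1}.
\]
Because $[\Sigma_{t}]$ is fixed in homology and $\omega$ is closed, $\int_{\Sigma_{t}}\cos\alpha\,d\mu_{t}=\int_{\Sigma_{t}}\omega$ is constant in $t$; together with $1-\cos\alpha\leq f$ this yields $\mathrm{Area}(\Sigma_{t})\leq\int_{\Sigma_{0}}\omega+e^{-2Ct}\epsilon_{1}$, and of course $\mathrm{Area}(\Sigma_{t})\leq\mathrm{Area}(\Sigma_{0})$.

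Now suppose $T<\infty$ were the first singular time, with singular point $x_{0}$. Working in the geodesic ball $B_{r_{0}}(x_{0})$ of Remark 4.1.1, on which Huisken's monotonicity formula holds up to an error controlled by $r_{0}$, I would estimate the Gaussian density $\Theta(\Sigma,x_{0},T)$ (equivalently the relevant area ratios) by comparison with earlier times and split the backward-heat-weighted area as $\int_{\Sigma_{t}}\Phi\,d\mu_{t}\leq\int_{\Sigma_{t}}\Phi\,\omega+\int_{\Sigma_{t}}\Phi\,f\,d\mu_{t}$: the first term is controlled by writing $\omega=d\eta$ on $B_{r_{0}}$ and using Stokes together with the area bounds above, while the second is $\leq\frac{1}{4\pi r^{2}}\int_{\Sigma_{t}}f\,d\mu_{t}$ on parabolic balls of radius $r<r_{0}$, hence a multiple of $e^{-2Ct}\epsilon_{1}$. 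The explicit hypothesis $\epsilon_{1}\leq\frac{\pi^{2}\epsilon_{0}^{2}r_{0}^{6}(1-e^{-\frac{3}{8}(2-\lambda)k_{1}})^{2}}{4\,\mathrm{Area}(\Sigma_{0})}$ is precisely what forces $\Theta(\Sigma,x_{0},T)<1+\epsilon_{0}$, so White's local regularity theorem (Theorem 4.1) yields a curvature bound near $(x_{0},T)$, contradicting the choice of $T$; thus the flow is smooth for all $t\geq0$. Finally, the resulting uniform curvature estimates on $[0,\infty)$ give smooth subconvergence of $\Sigma_{t}$ along $t_{j}\to\infty$; since $\int_{0}^{\infty}\!\int_{\Sigma_{t}}|H|^{2}\,d\mu_{t}\,dt=\mathrm{Area}(\Sigma_{0})-\lim_{t}\mathrm{Area}(\Sigma_{t})<\infty$ and $\int_{\Sigma_{t}}\frac{\sin^{2}\alpha}{\cos\alpha}\,d\mu_{t}\to 0$, the limit is a minimal surface with $\cos\alpha\equiv1$, i.e.\ a holomorphic curve, and upgrading subconvergence to convergence follows the argument of [HL0].

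The \emph{main obstacle} is the singularity-exclusion step: converting the $L^{1}$-smallness of $\frac{\sin^{2}\alpha}{\cos\alpha}$ into a genuine pointwise bound on the Gaussian density, or area ratio, at every scale $r<r_{0}$ and every point of the flow, with constants matching the stated bound on $\epsilon_{1}$. This is where embeddedness of $\Sigma_{0}$, the manifold error terms in Huisken's monotonicity formula, and the calibration/Stokes estimate for $\int_{\Sigma_{t}}\Phi\,\omega$ must all be combined carefully; the remaining steps are routine adaptations of [HL0], with Theorem \ref{Thm1} supplying the ambient-curvature input in place of the K\"ahler--Einstein condition.
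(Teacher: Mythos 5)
Your first two steps (preservation of $\cos\alpha\geq\delta$ and exponential decay of $\int_{\Sigma_t}\frac{\sin^2\alpha}{\cos\alpha}\,d\mu_t$ at rate $\tfrac34(2-\lambda)k_1$, using the fact that $\int_{\Sigma_t}\cos\alpha\,d\mu_t$ is a homology invariant) match the paper's Proposition~4.1 and are essentially correct. Your final step (from long-time existence plus decay of the angle integral to convergence to a holomorphic curve) is also the paper's.

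The genuine gap is the singularity-exclusion step, which you yourself flag as the main obstacle. Your plan is to bound the parabolic density $\Phi(X_0,T,T-r_0^2)$ at a putative singular time by splitting $\int\phi\rho\,d\mu_t\leq\int\phi\rho\,\omega+\int\phi\rho\,f\,d\mu_t$ and treating $\int\phi\rho\,\omega$ via a primitive $\eta$ and Stokes. This does not work as stated: Stokes produces the term $-\int_{\Sigma_t}d(\phi\rho)\wedge\eta$, in which $|\nabla\rho|\sim\frac{|X-X_0|}{r_0^2}\rho$ is of the same order as $\rho$ itself on the relevant scale, so one does not recover a bound of the form $1+\epsilon_0/2$; moreover, if the surface is close to a holomorphic curve the quantity $\int\phi\rho\,\omega$ is essentially the density itself, which is exactly what you are trying to bound. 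The calibration trick controls area ratios of holomorphic curves, not the backward-heat-weighted density, and converting one to the other is not a routine Stokes computation.

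The paper circumvents this entirely with a missing intermediate step, Proposition~4.2: from the decay of $\int_t^{t+1}\int_{\Sigma_t}|H|^2$, Cauchy--Schwarz over unit time intervals gives the spacetime $L^1$ bound $\int_0^T\!\int_{\Sigma_t}|H|\,d\mu_t\,dt\leq C_0^{1/2}\,\mathrm{Area}(\Sigma_0)^{1/2}\,\bigl(1-e^{-\frac38(2-\lambda)k_1}\bigr)^{-1}$ (this is where the $\tfrac38$ comes from, via the square root, not as the decay rate of $\int f$). Then one computes $\tfrac{\partial}{\partial t}\Phi(X_0,t,t-r_0^2)$ for the \emph{moving} pair of times (not fixed final time as in Huisken monotonicity); the resulting error terms are of the form $\int\!\int(\cdots)|H|\,d\mu_t\,dt$ and hence controlled by Proposition~4.2. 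Integrating from $t=r_0^2$ to $t=T$ and using Remark~4.1.1 gives
\[
\Phi(X_0,T,T-r_0^2)\ \leq\ 1+\tfrac{\epsilon_0}{2}+\tfrac{1}{\pi r_0^3}\,C_0^{1/2}\,\mathrm{Area}(\Sigma_0)^{1/2}\bigl(1-e^{-\frac38(2-\lambda)k_1}\bigr)^{-1},
\]
and the stated hypothesis on $\epsilon_1$ is exactly what forces the last term $\leq\epsilon_0/2$, so White's theorem applies at every point uniformly. You should replace your calibration argument with this $L^1(|H|)$ estimate and the moving-density computation; the rest of your outline then goes through as written.
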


As a consequence, we obtain that
\begin{cor}
Suppose that $M$ is a K\"ahler surface with positive holomorphic sectional curvature and $1\leq \lambda <2$. Let $\alpha$ be the K\"ahler angle of the surface $\Sigma_{t}$ which evolves by the mean curvature flow. If there exists a positive constant $\delta$ such that
\[\cos\alpha(\cdot,0)\geq \delta > \frac{29(\lambda-1)}{\sqrt{(48-24\lambda)^{2}+(29\lambda-29)^{2}}}\]
and has the longtime existence. Then the mean curvature flow converges to a holomorphic curve.
\end{cor}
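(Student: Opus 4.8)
The plan is to combine Theorem~\ref{Thm1} with the convergence mechanism of the second theorem. The key observation is that Theorem~\ref{Thm1} guarantees that under the hypothesis $\cos\alpha(\cdot,0)\geq\delta>\frac{29(\lambda-1)}{\sqrt{(48-24\lambda)^{2}+(29\lambda-29)^{2}}}$, the quantity $\min_{\Sigma_t}\cos\alpha$ is monotonically non-decreasing along the flow, and moreover satisfies the differential inequality
\begin{equation}
\Bigl(\frac{\partial}{\partial t}-\Delta\Bigr)\cos\alpha\geq|\overline{\nabla}J_{\Sigma_t}|^{2}\cos\alpha+C\sin^{2}\alpha
\end{equation}
with $C>0$. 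So the first step is to run the flow with the assumed long-time existence and use this inequality to show that $\cos\alpha\to 1$ uniformly. Indeed, integrating $\sin^{2}\alpha=1-\cos^{2}\alpha$ against the inequality over $\Sigma_t$ and using the first variation / evolution of the area element, one obtains that $\int_0^\infty\int_{\Sigma_t}\sin^{2}\alpha\,d\mu_t\,dt<\infty$; combined with the lower bound $\cos\alpha\geq\delta$ (hence bounded geometry controls on the relevant quantities) and the monotonicity of $\min\cos\alpha$, a standard argument forces $\min_{\Sigma_t}\cos\alpha\to 1$, i.e. $\Sigma_t$ becomes asymptotically holomorphic.

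The second step is to feed this into the convergence argument. Once $\min_{\Sigma_t}\cos\alpha$ is close enough to $1$ at some time $t_0$, the hypotheses of Theorem~4.2 (the second theorem of the paper) are met at time $t_0$: in particular $\int_{\Sigma_{t_0}}\frac{\sin^{2}\alpha}{\cos\alpha}d\mu_{t_0}$ can be made smaller than the threshold $\epsilon_1$, because $\frac{\sin^2\alpha}{\cos\alpha}\leq\frac{1}{\delta}\sin^2\alpha$ and the time-integral of $\int\sin^2\alpha\,d\mu$ being finite forces $\int_{\Sigma_t}\sin^2\alpha\,d\mu_t\to 0$ along a sequence, while the area $\mathrm{Area}(\Sigma_t)$ stays bounded (it is non-increasing under MCF, or at least controlled). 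Also the pointwise condition $\cos\alpha(\cdot,t_0)>\frac{58(\lambda-1)}{\sqrt{(48-24)^2+(58\lambda-58)^2}}$ holds once $\min\cos\alpha$ is close to $1$. Applying Theorem~4.2 with $\Sigma_{t_0}$ as the new initial surface, the flow exists globally (which we already know) and converges to a holomorphic curve. By uniqueness of the flow this is the same flow we started with, so the original flow converges to a holomorphic curve.

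The main obstacle I expect is making the ``$\cos\alpha\to 1$'' step quantitative enough and verifying that the smallness threshold $\epsilon_1$ of Theorem~4.2 is actually achieved at a finite time $t_0$ rather than merely in the limit. This requires: (i) a uniform bound on $\mathrm{Area}(\Sigma_t)$, which follows since $\frac{d}{dt}\mathrm{Area}(\Sigma_t)=-\int_{\Sigma_t}|H|^2 d\mu_t\leq 0$; (ii) the finiteness of $\int_0^\infty\int_{\Sigma_t}\sin^2\alpha\,d\mu_t\,dt$, which comes from integrating the displayed differential inequality (the term $|\overline{\nabla}J_{\Sigma_t}|^2\cos\alpha\geq 0$ is discarded and $\int_{\Sigma_t}\Delta\cos\alpha\,d\mu_t=0$, so $\frac{d}{dt}\int_{\Sigma_t}\cos\alpha\,d\mu_t\geq C\int_{\Sigma_t}\sin^2\alpha\,d\mu_t-\int_{\Sigma_t}|H|^2\cos\alpha\,d\mu_t$, and since $\int_{\Sigma_t}\cos\alpha\,d\mu_t\leq\mathrm{Area}(\Sigma_0)$ is bounded one integrates in time); and (iii) from $\int_0^\infty\bigl(\int_{\Sigma_t}\sin^2\alpha\,d\mu_t\bigr)dt<\infty$ one extracts a time $t_0$ with $\int_{\Sigma_{t_0}}\sin^2\alpha\,d\mu_{t_0}$ as small as desired, hence $\int_{\Sigma_{t_0}}\frac{\sin^2\alpha}{\cos\alpha}d\mu_{t_0}\leq\delta^{-1}\int_{\Sigma_{t_0}}\sin^2\alpha\,d\mu_{t_0}<\epsilon_1$. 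One must also check that the pointwise hypothesis of Theorem~4.2 holds at $t_0$; this needs a pointwise smallness of $\sin^2\alpha$, which may require an additional short argument (e.g. a parabolic mean-value or Moser iteration on the equation for $\sin^2\alpha$, using the bounded geometry from $\cos\alpha\geq\delta$) rather than just the integral decay — that is the delicate point. Granting this, the corollary follows.
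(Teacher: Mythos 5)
Your proposal differs from the paper's route in a key structural way, and the step you yourself flag as delicate is a genuine gap, although a fixable one. The paper's argument is shorter and sidesteps the issue entirely. Since $\delta>\frac{29(\lambda-1)}{\sqrt{(48-24\lambda)^2+(29\lambda-29)^2}}$, the author writes $\delta=\frac{K(\lambda-1)}{\sqrt{(48-24\lambda)^2+(K(\lambda-1))^2}}$ for some $K>29$ (explicitly $K=\frac{24(2-\lambda)}{\sqrt{\delta^{-2}-1}\,(\lambda-1)}$) and observes that the estimate from the proof of Theorem~\ref{Thm1},
\[
\mathrm{Ric}(Je_1,e_2)\;\geq\;\tfrac{3}{2}(2-\lambda)k_1\cos\alpha-\tfrac{29}{16}(\lambda-1)k_1\sqrt{1-\cos^2\alpha},
\]
then gives $\mathrm{Ric}(Je_1,e_2)\geq\tfrac{3}{2}(2-\lambda)\bigl(1-\tfrac{29}{K}\bigr)k_1\cos\alpha>0$ whenever $\cos\alpha\geq\delta$. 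In other words, the constant $58$ in Proposition~4.1 and Theorem~4.2 is not a threshold one must climb past; it is merely the choice that makes the decay rate exactly half of the maximal one, and any $K>29$ works with a correspondingly smaller (but still positive) exponential rate for $\int_{\Sigma_t}\frac{\sin^2\alpha}{\cos\alpha}\,d\mu_t$. With that, the author simply re-runs the Proposition~4.1/4.2 and Theorem~4.2 machinery with $K$ in place of $58$; no pointwise improvement of $\cos\alpha$ is needed.

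By contrast, your plan is to show $\min_{\Sigma_t}\cos\alpha$ eventually exceeds the $58$-threshold, pick such a time $t_0$, and restart Theorem~4.2 there. The step you flag — upgrading integral decay of $\sin^2\alpha$ to a pointwise bound at $t_0$ — really is the crux, and the Moser-iteration/mean-value fix you sketch is awkward because the required interior regularity estimates tacitly need the very $|A|$ control one is trying to produce. A cleaner repair is available from within Theorem~\ref{Thm1} itself: dropping the nonnegative term $|\overline{\nabla}J_{\Sigma_t}|^2\cos\alpha$ in $(\partial_t-\Delta)\cos\alpha\geq|\overline{\nabla}J_{\Sigma_t}|^2\cos\alpha+C\sin^2\alpha$ and applying the maximum principle (Hamilton's trick) gives $\frac{d}{dt}\min_{\Sigma_t}\cos\alpha\geq C\bigl(1-(\min_{\Sigma_t}\cos\alpha)^2\bigr)$; comparison with the ODE $u'=C(1-u^2)$, $u(0)=\delta>0$, forces $\min_{\Sigma_t}\cos\alpha\to1$, so the pointwise $58$-condition holds for all large $t_0$ with no regularity input at all. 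With that patch your restart argument goes through, but it is a longer road than the paper's observation that $58$ can simply be replaced by $K>29$ from the start.
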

Han, Li and Yang proved the following theorem with constant holomorphic sectional curvature, see Theorem 3.2, Theorem 4.1 and Theorem 4.2 in [HLY].
\begin{theorem}[Han-Li-Yang]
Suppose $\Sigma$ is a symplectic surface in $CP^{2}$ with constant holomorphic sectional curvature $k>0$. Assume that $|A|^{2}\leq \sigma |H|^{2}+\frac{2\sigma-1}{\sigma}k$ and $\cos\alpha\geq \sqrt{\frac{7\sigma-3}{3\sigma}}$ holds on the initial surface for any $\frac{1}{2}<\sigma\leq\frac{2}{3}$, then it remains true along the symplectic mean curvature flow. Furthermore, the symplectic mean curvature flow exists for long time and converges to a holomorphic curve at infinity.
\end{theorem}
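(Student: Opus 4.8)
The plan follows the three-part strategy familiar from pinching theorems in mean curvature flow, adapted to the symplectic setting: show that the region cut out by the two pinching inequalities is preserved by the flow, use the pinching to exclude finite-time singularities so that the flow is immortal, and then invoke the convergence results already obtained in this paper. Write $\Sigma_t$ for the evolving surface in $CP^2$ with the Fubini--Study metric normalized to constant holomorphic sectional curvature $k>0$, and $A,H,\alpha$ for its second fundamental form, mean curvature vector and K\"ahler angle; in an orthonormal frame $e_1,e_2$ of $T\Sigma_t$ there is the algebraic identity $|A|^2-\tfrac12|H|^2=\tfrac12|h_{11}-h_{22}|^2+2|h_{12}|^2\ge0$. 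Two structural facts will be used: $\int_{\Sigma_t}\cos\alpha\,d\mu_t=\int_{\Sigma_t}\omega=[\omega]([\Sigma_0])$ is independent of $t$, and $\frac{d}{dt}\,\mathrm{Area}(\Sigma_t)=-\int_{\Sigma_t}|H|^2\,d\mu_t\le0$, so the area is non-increasing and bounded below by $[\omega]([\Sigma_0])$; hence $\int_0^\infty\!\int_{\Sigma_t}|H|^2\,d\mu_t\,dt<\infty$, and differentiating the first identity together with the evolution inequality for $\cos\alpha$ below also yields $C\int_{\Sigma_t}\sin^2\alpha\,d\mu_t\le\int_{\Sigma_t}|H|^2\,d\mu_t$ for every $t$.

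For the K\"ahler angle, preservation is immediate: with $k_1=k_2=k$ and $\lambda=1$ (and $\cos\alpha(\cdot,0)\ge\delta_0>0$ automatically on a closed symplectic surface), Theorem~\ref{Thm1} gives $(\partial_t-\Delta)\cos\alpha\ge|\overline{\nabla}J_{\Sigma_t}|^2\cos\alpha+C\sin^2\alpha$ with $C=C(k)>0$, so $\min_{\Sigma_t}\cos\alpha$ is non-decreasing and the bound $\cos\alpha\ge\sqrt{(7\sigma-3)/(3\sigma)}$ --- a genuine positive constant strictly less than $1$ for $\tfrac12<\sigma\le\tfrac23$ --- is preserved by the maximum principle. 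The substantive part is the second fundamental form. Using the standard evolution equations for $|A|^2$ and $|H|^2$ under the flow, with the curvature tensor of $CP^2$ inserted explicitly, I would form $G:=|A|^2-\sigma|H|^2-\frac{2\sigma-1}{\sigma}k$ and derive an inequality of the shape $(\partial_t-\Delta)G\le -2\big(|\nabla A|^2-\sigma|\nabla H|^2\big)+\mathcal R(A,H,\alpha)$, the reaction term $\mathcal R$ collecting the quartic terms in $A$ and the curvature--$A$ couplings, all of mixed sign. The crux is to show that on $\{G=0\}\cap\{\cos^2\alpha\ge(7\sigma-3)/(3\sigma)\}$ one has $-2(|\nabla A|^2-\sigma|\nabla H|^2)+\mathcal R\le0$: for this I would split $A$ into its holomorphic and anti-holomorphic parts relative to the almost complex structure (so that $|H|^2$, $|A|^2$ and the ``holomorphic'' part of $A$ become tied to $\cos\alpha$), use the codimension-two Kato/Simons inequality to dominate $|\nabla H|^2$ by $|\nabla A|^2$ with the sharp constant, and then carry out the algebra, in which the borderline values $\tfrac12<\sigma\le\tfrac23$ and the exact pinching $\cos^2\alpha\ge(7\sigma-3)/(3\sigma)$ are precisely what make the bad terms absorbable. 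Granting this sign, the maximum principle applied to $G$ shows $\{G\le0\}\cap\{\cos\alpha\ge\sqrt{(7\sigma-3)/(3\sigma)}\}$ is invariant under the flow, which is the first assertion of the theorem.

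Next, long-time existence. With the pinching one has $|A|^2\le\sigma|H|^2+C(k,\sigma)$, but this alone does not bound $|A|$ (since $\sigma>\tfrac12$, the inequality $\tfrac12|H|^2\le\sigma|H|^2+C$ is vacuous), so I would exclude finite-time singularities by a blow-up argument. If $T<\infty$ were the first singular time, then $\max_{\Sigma_t}|A|\to\infty$ and hence $\max_{\Sigma_t}|H|\to\infty$; parabolically rescale about a singular point so that, by Huisken's monotonicity formula (and Hamilton's rescaling in the Type II case), a tangent flow $\Sigma_\infty$ is a complete self-shrinker in $\mathbb{R}^4=\mathbb{C}^2$. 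The K\"ahler angle is scale invariant, so $\cos\alpha$ is constant and $\ge\sqrt{(7\sigma-3)/(3\sigma)}>0$ on $\Sigma_\infty$ (the standard analysis of $\cos\alpha$ on symplectic self-shrinkers gives $\overline{\nabla}J_{\Sigma_\infty}\equiv0$), and the additive $k$-term scales away, so $G\le0$ becomes $|A|^2\le\sigma|H|^2$ with $\sigma<1$ on $\Sigma_\infty$; since $|A|^2\ge\tfrac12|H|^2$ always holds, this is a nontrivial pinching, and the rigidity of pinched symplectic self-shrinkers --- integrate the Simons-type identity against the Gaussian weight, the pinching making the right-hand side sign-definite --- forces $A\equiv0$, i.e. $\Sigma_\infty$ is a multiplicity-one plane, whence White's local regularity theorem (Theorem~4.1) contradicts the point being singular. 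Therefore the flow is immortal; and since $\cos\alpha\ge\delta_0>0$ persists, the preceding Corollary (whose obstruction $\tfrac{29(\lambda-1)}{\sqrt{(48-24\lambda)^2+(29\lambda-29)^2}}$ vanishes at $\lambda=1$) applies and gives convergence to a holomorphic curve at infinity. The step I expect to be the main obstacle is the preservation of the $|A|^2$-pinching --- choosing the decomposition of $\mathcal R(A,H,\alpha)$ so that the borderline constants $\tfrac12<\sigma\le\tfrac23$ and $\cos^2\alpha\ge(7\sigma-3)/(3\sigma)$ are exactly enough to close $(\partial_t-\Delta)G\le0$ along $\{G=0\}$ --- since in codimension two one must keep track of the normal connection and of the holomorphic/anti-holomorphic splitting of $A$ against $\alpha$, rather than the scalar bookkeeping available for hypersurfaces; a secondary difficulty is supplying the multiplicity-one and rigidity input in the singularity-exclusion step.
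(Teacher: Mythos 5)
Your preservation of the K\"ahler-angle bound via Theorem~\ref{Thm1} (at $\lambda=1$) is correct, and your framing of the second-fundamental-form pinching as preservation of $G=|A|^2-\sigma|H|^2-\tfrac{2\sigma-1}{\sigma}k$ is exactly the quantity the paper uses in Theorem~\ref{Thm1.3} (there called $Q$, specialized to $\lambda=1$, $K=0$, $b=\tfrac{2\sigma-1}{\sigma}$). But that computation is the heart of the statement, and you stop at ``carry out the algebra'': the actual closing of $(\partial_t-\Delta)G\le CG$ on $\{G=0\}$ requires (i) the Kato-type inequality of Lemma~\ref{L3.1} with the specific choice $\eta=\tfrac34-\sigma$ to trade $|\nabla H|^2$ for $|\nabla A|^2$ plus a curvature remainder $|w|^2$, (ii) diagonalizing $h^3$ with $e_3=H/|H|$ and tracking the traceless parts $\mathring h_3,\mathring h_4$ so that the cubic terms $P_1,P_2,P_3$ reorganize into a sum of squares with coefficients controlled by $\sigma$, and (iii) plugging in the explicit $CP^2$ curvature so the residual reaction term becomes a quadratic polynomial in $t=\cos^2\alpha$ whose positive root is exactly $\tfrac{7\sigma-3}{3\sigma}$. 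None of this is routine, and it is the reason the constants $\sigma\in(\tfrac12,\tfrac23]$ and $\cos^2\alpha\ge\tfrac{7\sigma-3}{3\sigma}$ appear.

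The long-time-existence step is a genuinely different route from the paper's, and it has a real gap. You propose a blow-up/rigidity argument: a tangent flow at a finite singular time is a self-shrinker in $\mathbb R^4$ on which the additive constant drops out of the pinching, then rigidity for pinched symplectic self-shrinkers forces $A\equiv0$, contradicting White's theorem. First, Type~I singularities are already excluded here (the paper's Theorem 3.2), and for a Type~II singularity Hamilton's rescaling produces an \emph{eternal} flow with bounded curvature, \emph{not} a self-shrinker, so the self-shrinker dichotomy does not apply as stated. Second, the asserted rigidity for symplectic self-shrinkers (or for the appropriate eternal limit flow) satisfying $\tfrac12|H|^2\le|A|^2\le\sigma|H|^2$ with $\sigma\le\tfrac23$ is a nontrivial classification that you neither cite nor prove, and multiplicity-one of the limit would also need an argument. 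The paper avoids all of this by a direct maximum-principle computation on $\phi=|H|^2 f(1/\cos\alpha)$ with $f(x)=\tfrac{x^2}{(2\sigma-(2\sigma-1)x)^2}$ chosen so that the gradient and reaction terms in $(\partial_t-\Delta)\phi$ cancel up to a linear term, yielding $\phi_{\max}(t)\le e^{Ct}\phi_{\max}(0)$; combined with $|A|^2\le\sigma|H|^2+bk_1$ this bounds $|A|$ on any finite time interval and hence gives long-time existence without any singularity analysis. Your final convergence step via Corollary 4.2.1 is fine; the paper itself notes that Theorem 6.2 follows from it (and also gives a self-contained argument using $|A|^2/e^{p\cos\alpha}$).
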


In this paper, we also prove the similar theorem with positive holomorphic sectional curvature.
\begin{theorem}\label{Thm1.3}
Suppose $\Sigma$ is a symplectic surface in the K\"ahler surface $(M,J,\omega,\overline{g})$ with positive holomorphic sectional curvature and $1\leq \lambda <1+\frac{1}{200}$, and $|\overline{\nabla}Rm|\leq Kk_{1}(\lambda-1)$ for a positive constant $K\leq\min\{2, 2k_{1}\}$. For any $\sigma$ satisfying $\frac{1}{2}+\frac{24(\lambda-1)}{1-34(\lambda-1)}<\sigma\leq \frac{2}{3}$, and we set
$$b=\frac{2\sigma-1}{\sigma}(8-7\lambda)-4K(\lambda-1),$$
and
\[a_{1}=9(\lambda+1)^{2},\]
\[a_{2}=9(\lambda+1)^{2}-\frac{12(3-4\sigma)}{2\sigma-1}b,\]
\[a_{3}=\frac{350-444\sigma}{2\sigma-1}(\lambda-1)+\frac{8(3-4\sigma)}{2\sigma-1}(23\lambda-\frac{41}{2})b-\frac{8(3-4\sigma)(\sigma+1)}{(2\sigma-1)^{2}}b^{2}.\]
\[t_{0}=\frac{a_{2}+\sqrt{a_{2}^{2}+4a_{1}a_{3}}}{2a_{1}}, \quad \delta=\max\{t_{0}, \frac{13\lambda-10}{3(\lambda+2)}\}\]
If we assume that $|A|^{2}\leq \sigma |H|^{2}+bk_{1}$ and $\cos\alpha\geq \sqrt{\delta}$ holds on the initial surface, then it remains true along the symplectic mean curvature flow. Furthermore, the symplectic flow exists for long time and converges to a holomorphic curve at infinity.
\end{theorem}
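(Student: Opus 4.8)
\emph{Proof sketch.} The plan is to follow the structure of the constant-curvature argument of [HLY] recalled above, while carrying along the extra error terms coming from the fact that the ambient curvature is now only pinched, $k_1\leq(\text{holomorphic sectional curvature})\leq\lambda k_1$, with $|\overline\nabla Rm|\leq Kk_1(\lambda-1)$. There are four stages: preserve the K\"ahler-angle pinching $\cos\alpha\geq\sqrt\delta$; preserve the curvature pinching $|A|^2\leq\sigma|H|^2+bk_1$; deduce long-time existence; and deduce convergence to a holomorphic curve. The first stage is almost immediate: for $1\leq\lambda<1+\tfrac1{200}$ one checks that $\sqrt\delta$, which is at least $\sqrt{\tfrac{13\lambda-10}{3(\lambda+2)}}\geq\tfrac1{\sqrt3}$, exceeds the threshold $\tfrac{29(\lambda-1)}{\sqrt{(48-24\lambda)^2+(29\lambda-29)^2}}$ of Theorem \ref{Thm1} (which is $\leq\tfrac1{100}$ on this range of $\lambda$). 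Hence Theorem \ref{Thm1} applies, giving $\big(\tfrac{\partial}{\partial t}-\Delta\big)\cos\alpha\geq|\overline\nabla J_{\Sigma_t}|^2\cos\alpha+C\sin^2\alpha$, so $\min_{\Sigma_t}\cos\alpha$ is nondecreasing and in particular $\cos\alpha\geq\sqrt\delta$ is preserved.

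The second stage is the core. I would compute the parabolic evolution inequality for $|A|^2-\sigma|H|^2$ from the Simons-type identity together with the Gauss and Codazzi equations of $\Sigma_t\subset M$; the terms involving the ambient curvature and $\overline\nabla Rm$ are bounded in terms of $k_1$, $\lambda k_1$ and $Kk_1(\lambda-1)$, while the normal-connection terms are controlled using $\cos\alpha\geq\sqrt\delta$. Then apply the maximum principle: if the curvature pinching fails for the first time at $(x_0,t_0)$, there $|A|^2-\sigma|H|^2-bk_1=0$, so $|H|^2\geq\tfrac1\sigma(|A|^2-bk_1)$; the gradient terms are absorbed by a Kato-type inequality for $\nabla A$, and the remaining cubic and curvature terms are estimated (this is where $\sigma\leq\tfrac23$ is used). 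The number $b=\tfrac{2\sigma-1}{\sigma}(8-7\lambda)-4K(\lambda-1)$ is chosen so that, provided $\cos^2\alpha\geq\delta$, this combination is strictly negative, and the auxiliary constants $a_1,a_2,a_3$ package precisely this: $\delta\geq t_0$ is equivalent to $a_1 s^2-a_2 s-a_3\geq0$ at $s=\cos^2\alpha$, which makes the right-hand side of the evolution inequality nonpositive at the touching point. The hypotheses $\sigma>\tfrac12+\tfrac{24(\lambda-1)}{1-34(\lambda-1)}$ and $K\leq\min\{2,2k_1\}$ are exactly what guarantee $b>0$ and keep all coefficients admissible. This contradiction preserves the pinching for as long as the flow exists.

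For the third stage, with $|A|^2\leq\sigma|H|^2+bk_1$, $\sigma<1$ and $\cos\alpha\geq\sqrt\delta>0$ all preserved, a finite-time singularity is ruled out as in [HLY]: a parabolic rescaling at a putative singular point produces a nonflat limit still satisfying $|A|^2\leq\sigma|H|^2$ with $\sigma<1$ and $\cos\alpha$ bounded below, and by Huisken's monotonicity formula together with White's regularity theorem (Theorem 4.1 of the present paper) the Gaussian density at that point would then be $<1+\epsilon_0$, forcing it to be a regular point --- a contradiction. Hence the flow exists for all $t\in[0,\infty)$. For the fourth stage, since $\min_{\Sigma_t}\cos\alpha$ is nondecreasing, $\leq1$, and satisfies $\tfrac{d}{dt}\min_{\Sigma_t}\cos\alpha\geq C\big(1-(\min_{\Sigma_t}\cos\alpha)^2\big)$, it must converge to $1$, so $\cos\alpha\to1$ uniformly; combining this with $\int_0^\infty\!\!\int_{\Sigma_t}|H|^2\,d\mu_t\,dt<\infty$ (from the monotonicity of area) and the uniform $|A|^2$ bound, the interior smoothing estimates give uniform higher-order bounds along a subsequence, so $\Sigma_t$ subconverges smoothly to a limit surface with $\cos\alpha\equiv1$, that is, a holomorphic curve.

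The genuinely hard step is the second one: checking that the explicit constants $b,a_1,a_2,a_3,t_0$ and the thresholds on $\sigma$ and $\lambda$ are mutually compatible so that the maximum-principle estimate closes with a \emph{strict} sign. The non-constant-curvature errors, bounded only by $Kk_1(\lambda-1)$, feed into the cubic-in-$A$ and curvature terms and eat into the margin that was available in the constant-curvature case of [HLY]; this is precisely why the admissible interval for $\lambda$ contracts to $[1,1+\tfrac1{200})$, and propagating these error terms through the computation without destroying the needed inequalities is the delicate bookkeeping.
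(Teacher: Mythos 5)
Your stages 1 and 2 match the paper's strategy: stage 1 is exactly the paper's reduction to its earlier Theorem \ref{Thm1}, and stage 2 is the same Simons-identity plus Kato inequality plus maximum-principle computation (the paper splits into the cases $|H|\neq 0$ and $|H|=0$ at the touching point, and the choice $\eta=\tfrac34-\sigma$ in Lemma \ref{L3.1}, but the structure is the one you describe).

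The divergence, and the genuine gap, is in stages 3 and 4. For long-time existence, you propose a parabolic rescaling at a putative singular point and assert that White's theorem forces the Gaussian density there to be below $1+\epsilon_0$. That implication is not justified: the pinching $|A|^2\leq\sigma|H|^2+bk_1$ together with $\cos\alpha\geq\sqrt\delta$ does not control the local mass ratio, so nothing in your hypotheses makes the density small. (Smallness of the density is available in Section 4 of the paper, but only under the much stronger assumption that $\int\frac{\sin^2\alpha}{\cos\alpha}$ is initially small, which gives the space-time $L^1$ bound on $|H|$ that feeds into $\Phi$; here you do not have that.) The paper instead proves Theorem 6.1 by a direct maximum-principle argument on the auxiliary quantity $\phi=|H|^2 f\bigl(\tfrac{1}{\cos\alpha}\bigr)$ with $f(x)=x^2/(2\sigma-(2\sigma-1)x)^2$, showing $\phi_{\max}(t)\leq e^{(C+2b)k_1 t}\phi_{\max}(0)$, whence $|H|^2$ and $|A|^2$ grow at most exponentially and the flow cannot blow up in finite time. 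This $f$-trick is the crux you have replaced by an unsupported density estimate.

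For convergence, stage 4 invokes ``the uniform $|A|^2$ bound,'' but nothing in your sketch establishes uniform-in-time control of $|A|^2$: ruling out finite-time singularities (even if done correctly) gives finiteness of $|A|$ on compact time intervals, not a bound as $t\to\infty$. The paper obtains the uniform bound in Theorem 6.2: first $\sin^2(\alpha/2)\leq e^{-\frac49 k_1 t}$ from the reaction term $Ric(Je_1,e_2)\geq\tfrac13 k_1$, then a maximum-principle argument for $|A|^2/e^{p\cos\alpha}$ with $p=\epsilon^{-1/2}$ that absorbs the quartic term once $\epsilon$ is small. Without that step, your convergence argument does not close: ``subconverges smoothly'' presupposes exactly the uniform $|A|^2$ bound you are trying to reach. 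In short, the long-time existence and convergence halves both need the explicit auxiliary-function estimates that the paper carries out, and the soft rescaling/density picture you sketched does not supply them from the hypotheses of this theorem.
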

\begin{remark}
If $\lambda=1$, then $\frac{13\lambda-10}{3(\lambda+2)}=\frac{1}{3}$ and
\[t_{0}=\frac{7\sigma-3+\sqrt{(7\sigma-3)^{2}+4(3-4\sigma)(3\sigma-2)}}{6\sigma}\leq \frac{7\sigma-3}{3\sigma}.\]
Hence even in the case of $\lambda=1$, the above theorem improving Han-Li-Yang's Theorem (Theorem 1.3).
\end{remark}
\begin{remark}
By Lemma \ref{BSC}, if $\lambda<\frac{3}{2}$, then the sectional curvature of $M$ is positive, implies that the bisectional curvature is positive. By Frankel conjecture, which was proved by Siu-Yau [SY] using harmonic maps and by Mori [M] via algebraic methods, then the K\"ahler surface is biholomorphic to $\mathbb{C}\mathbb{P}^{2}$.
\end{remark}
\section{Preliminaries}
\subsection{Evolution Equations}
Suppose that $\Sigma$ is a sub manifold in a Riemannian manifold $M$, we choose an orthonormal basis $\{e_{i}\}$ for $T\Sigma$ and $\{e_{\alpha}\}$ for $N\Sigma$.
Given an immersed $F_{0}:\Sigma\rightarrow M$, we consider a one parameter family of smooth maps $F_{t}=F(\cdot, t):\Sigma\rightarrow M$ with corresponding images $\Sigma_{t}=F_{t}(\Sigma)$ immersed in $M$ and $F$ satisfies the mean curvature flow equation:
\begin{equation}
\begin{cases}
\frac{\partial}{\partial t}F(x,t)=H(x,t)\\
F(x,0)=F_{0}(x)
\end{cases}
\end{equation}

Recall the evolution equation for the second fundamental form $h_{ij}^{\alpha}$ and $|A|^{2}$ along the mean curvature flow (see[CJ1], [HLY], [S1], [W]).
\begin{lemma}
\begin{eqnarray}
\begin{aligned}
\frac{\partial}{\partial t} h_{ij}^{\alpha}=&\Delta h_{ij}^{\alpha}+(\overline{\nabla}_{k}Rm)_{\alpha ijk}+(\overline{\nabla}_{j}Rm)_{\alpha kik}-2R_{lijk}h_{lk}^{\alpha}\\
&+2R_{\alpha\beta jk}h_{ik}^{\beta}+2R_{\alpha\beta ik}h_{jk}^{\beta}-R_{lkik}h_{lj}^{\alpha}-R_{lkjk}h_{il}^{\alpha}\\
&+R_{\alpha k\beta k}h_{ij}^{\beta}-H^{\beta}(h_{ik}^{\beta}h_{jk}^{\alpha}+h_{jk}^{\beta}h_{ik}^{\alpha})+h_{im}^{\alpha}h_{mk}^{\beta}h_{kj}^{\beta}\\
&-2h_{im}^{\beta}h_{mk}^{\alpha}h_{kj}^{\beta}+h_{ik}^{\beta}h_{km}^{\beta}h_{mj}^{\alpha}+h_{km}^{\alpha}h_{mk}^{\beta}h_{ij}^{\beta}+h_{ij}^{\beta}<e_{\beta}, \overline{\nabla}_{H}e_{\alpha}>,
\end{aligned}
\end{eqnarray}
where $R_{ABCD}$ is the curvature tensor of $M$ and $\overline{\nabla}$ is the covariant derivative of $M$. Therefore
\begin{eqnarray}\label{A}
\begin{aligned}
\frac{\partial}{\partial t}|A|^{2}=&\Delta A-2|\nabla A|^{2}+[(\overline{\nabla}_{k}Rm)_{\alpha ijk}+(\overline{\nabla}_{j}Rm)_{\alpha kik}]h_{ij}^{\alpha}\\
&-4R_{lijk}h_{lk}^{\alpha}h_{ij}^{\alpha}+8R_{\alpha\beta jk}h_{ik}^{\beta}h_{ij}^{\alpha}-4R_{lkik}h_{lj}^{\alpha}h_{ij}^{\alpha}+2R_{\alpha k\beta k}h_{ij}^{\beta}h_{ij}^{\alpha}+2P_{1}+2P_{2}
\end{aligned}
\end{eqnarray}
where
\begin{equation*}
P_{1}=\Sigma_{\alpha,\beta,i,j}(\Sigma_{k}(h_{ik}^{\alpha}h_{jk}^{\beta}-h_{jk}^{\alpha}h_{ik}^{\beta}))^{2},
\end{equation*}
\begin{equation*}
P_{2}=\Sigma_{\alpha,\beta}(\Sigma_{i,j}h_{ij}^{\alpha}h_{ij}^{\beta})^{2}.
\end{equation*}
\begin{equation}\label{H}
\frac{\partial}{\partial t}|H|^{2}=\Delta |H|^{2}-2|\nabla H|^{2}+2R_{\alpha k\beta k}H^{\alpha}H^{\beta}+2P_{3}
\end{equation}
where
\begin{equation*}
P_{3}=\Sigma_{i,j}(\Sigma_{\alpha}H^{\alpha}h_{ij}^{\alpha})^{2}.
\end{equation*}
\end{lemma}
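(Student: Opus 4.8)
The plan is to derive the three evolution equations by the standard moving-frame computation of Huisken, adapted to higher codimension as in [S1], [W], [CJ1], [HLY]. First I would record the basic variational formulas along $\partial_t F = H$: the induced metric evolves by $\partial_t g_{ij} = -2H^{\alpha}h_{ij}^{\alpha}$, so that in a frame which is orthonormal at the point and time under consideration one also has $\partial_t g^{ij} = 2H^{\alpha}h_{ij}^{\alpha}$ and $\partial_t d\mu_{\Sigma_t} = -|H|^{2}d\mu_{\Sigma_t}$; moreover the time derivative commutes with spatial covariant differentiation only up to explicit lower-order terms that must be tracked. Choosing $\{e_i\}$ for $T\Sigma_t$ and $\{e_\alpha\}$ for $N\Sigma_t$ normal in the appropriate sense at the chosen point, I would compute $\partial_t h_{ij}^{\alpha} = \partial_t\langle\overline{\nabla}_{e_i}e_j, e_\alpha\rangle$ by differentiating this defining expression, using the flow equation to replace $\partial_t F$ by $H$, and invoking the ambient curvature tensor $R_{ABCD}$ of $M$ each time two covariant derivatives are interchanged; this produces an $\overline{\nabla}_i\overline{\nabla}_j H^{\alpha}$ term together with curvature and cubic-in-$A$ corrections, plus the normal-connection term $h_{ij}^{\beta}\langle e_\beta, \overline{\nabla}_H e_\alpha\rangle$ recording the rotation of the normal frame along $H$.

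Second, the key analytic input is the Simons-type identity in higher codimension, which expresses $\Delta h_{ij}^{\alpha}$ in terms of the Hessian $\overline{\nabla}_k\overline{\nabla}_j H^{\alpha}$, the derivative-of-curvature terms $(\overline{\nabla}_k Rm)_{\alpha ijk}$ and $(\overline{\nabla}_j Rm)_{\alpha kik}$ arising from the Codazzi equation and the second Bianchi identity, contractions of $Rm$ with $A$, and cubic terms in $A$. Combining this with Step~1 and observing that the Hessian-of-$H$ contributions cancel, one is left with the stated reaction–diffusion equation for $h_{ij}^{\alpha}$, including the Huisken-type term $-H^{\beta}(h_{ik}^{\beta}h_{jk}^{\alpha}+h_{jk}^{\beta}h_{ik}^{\alpha})$ and the remaining cubic terms $h_{im}^{\alpha}h_{mk}^{\beta}h_{kj}^{\beta}-2h_{im}^{\beta}h_{mk}^{\alpha}h_{kj}^{\beta}+\cdots$.

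Third, to obtain \eqref{A} I would differentiate $|A|^{2}=g^{ik}g^{jl}h_{ij}^{\alpha}h_{kl}^{\alpha}$ in time, using the inverse-metric variation from Step~1 together with the evolution of $h_{ij}^{\alpha}$; the metric-variation terms combine with the cubic terms in $\partial_t h_{ij}^\alpha$ to assemble exactly the two natural quartic-in-$A$ quantities $P_1$ (antisymmetric) and $P_2$ (symmetric) in the pairing of tangential against normal indices, while the Laplacian term becomes $\Delta|A|^{2}-2|\nabla A|^{2}$ by the Bochner identity; the curvature contractions $-4R_{lijk}h_{lk}^{\alpha}h_{ij}^{\alpha}+8R_{\alpha\beta jk}h_{ik}^{\beta}h_{ij}^{\alpha}-4R_{lkik}h_{lj}^{\alpha}h_{ij}^{\alpha}+2R_{\alpha k\beta k}h_{ij}^{\beta}h_{ij}^{\alpha}$ and the derivative-of-curvature term follow by contracting the corresponding terms in the evolution of $h_{ij}^{\alpha}$. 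The formula \eqref{H} for $|H|^{2}$ follows more directly from the known evolution $\partial_t H^{\alpha}=\Delta H^{\alpha}+R_{\alpha k\beta k}H^{\beta}+H^{\beta}h_{ij}^{\alpha}h_{ij}^{\beta}+\cdots$ (see [W]), contracting with $H^\alpha$ and applying Bochner once more, which produces the single quartic term $P_3$.

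I expect the main obstacle to be purely bookkeeping: in codimension greater than one the normal bundle carries a nontrivial connection, so every interchange of derivatives and every differentiation of a frame vector generates a normal-connection term, and one must verify that all such contributions either cancel against each other or combine into the compact expressions $P_1,P_2,P_3$ and the displayed curvature contractions. Since these identities are already established in the cited references, I would carry out the derivation for a general Riemannian ambient manifold $M$ and merely remark that no special feature of the K\"ahler surface is used at this stage — the K\"ahler geometry enters only later, through the explicit evaluation of $R_{\alpha k\beta k}$, $R_{\alpha\beta jk}$ and the like in terms of $k_1,k_2$ and the K\"ahler angle.
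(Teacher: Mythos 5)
Your outline is correct and is essentially the standard Huisken--Simons computation in higher codimension (variation of the induced metric, evolution of $h_{ij}^{\alpha}$ via the Simons-type identity with the Codazzi/second-Bianchi curvature-derivative terms, then contraction to get $|A|^{2}$ and $|H|^{2}$ with the quartic terms $P_{1},P_{2},P_{3}$). The paper itself gives no proof of this lemma -- it simply recalls the equations from [CJ1], [HLY], [S1], [W] -- and your sketch reproduces exactly the derivation contained in those references, so there is nothing further to compare.
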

Choose an orthonormal basis $\{e_{1},e_{2},e_{3},e_{4}\}$ on $(M,\overline{g})$ along $\Sigma_{t}$ such that
$\{e_{1},e_{2}\}$ is the frame of the tangent bundle $T \Sigma_{t}$ and $\{e_{3},e_{4}\}$ is the frame of the normal bundle
$N\Sigma_{t}$. Then along the surface $\Sigma_{t}$, we can takes the complex structure on $M$ as the form (cf. [HLY])
\begin{equation}\label{J0}
J=\left(
\begin{array}{cccc}
0 & \cos\alpha & y & z \\
-\cos\alpha & 0 & -z & y\\
-y & z & 0 & -\cos\alpha\\
-z & -y & \cos\alpha & 0
\end{array}
\right)
\end{equation}
or
\begin{equation}\label{J}
J=\left(
\begin{array}{cccc}
0 & \cos\alpha & y & z \\
-\cos\alpha & 0 & z & -y\\
-y & -z & 0 & \cos\alpha\\
-z & y & -\cos\alpha & 0
\end{array}
\right)
\end{equation}
Since K\"ahler form is self-dual, then $J$ must be the form (\ref{J}).

\begin{remark}
In fact, the above argument also shows that the K\"ahler form is self-dual. If $J$ is the form (\ref{J0}), then the K\"ahler form is anti-self-dual, i.e., $*\omega=-\omega$, it is impossible for K\"ahler form. Hence $J$ must be the form (\ref{J}), then the K\"ahler form $\omega$ must be self-dual.
\end{remark}

Recall the evolution equation of the K\"abler angle $\cos\alpha$ (cf. [CL1], [HL2]),
\begin{lemma}
The evolution equation for $\cos\alpha$ along $\Sigma_{t}$ is
\begin{equation}\label{EvolutionCos}
(\frac{\partial}{\partial t}-\Delta)\cos\alpha =|\overline{\nabla}J_{\Sigma_{t}}|^{2}\cos\alpha +\sin^{2}\alpha Ric(Je_1, e_2).
\end{equation}
\end{lemma}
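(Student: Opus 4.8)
The plan is to prove \eqref{EvolutionCos} by computing $\frac{\partial}{\partial t}\cos\alpha$ and $\Delta\cos\alpha$ separately in an adapted moving frame and subtracting, using heavily that on a K\"ahler surface the complex structure and the K\"ahler form are parallel, $\overline\nabla J=0$ and $\overline\nabla\omega=0$. Fix a point $p$ and a time $t_0$ and choose an oriented orthonormal frame $\{e_1,e_2\}$ for $T\Sigma_t$ and $\{e_3,e_4\}$ for $N\Sigma_t$ near $p$, normalized so that at $(p,t_0)$ the tangential part of $\overline\nabla e_i$ and the normal part of $\overline\nabla e_\alpha$ both vanish; a tangential reparametrization of the flow, which does not change the scalar $\cos\alpha$, lets us also arrange $\partial_t e_i=\overline\nabla_{e_i}H$ up to vectors tangent to $\Sigma$. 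Write $J$ in the normal form \eqref{J}, so that $\cos\alpha=\langle Je_1,e_2\rangle$, the angle functions are $y=\langle Je_1,e_3\rangle$, $z=\langle Je_1,e_4\rangle$, with $y^2+z^2=\sin^2\alpha$ forced by $J^2=-\mathrm{Id}$, and $|\overline\nabla J_{\Sigma_t}|^2$ is, as in [CL1], [HL2], the squared norm of the covariant derivative along $\Sigma_t$ of this tangential--normal splitting of $J$.

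For the spatial part, differentiate $\cos\alpha=\langle Je_1,e_2\rangle$ tangentially. Using $\overline\nabla J=0$ and $\overline\nabla_{e_k}e_i=h^\alpha_{ki}e_\alpha$, $\overline\nabla_{e_k}e_\alpha=-h^\alpha_{ki}e_i$ at $p$, one finds expressions of the type $\nabla_k\cos\alpha=y(h^4_{k1}+h^3_{k2})+z(h^4_{k2}-h^3_{k1})$ and correspondingly $\nabla_k y=-\cos\alpha(h^4_{k1}+h^3_{k2})$, $\nabla_k z=-\cos\alpha(h^4_{k2}-h^3_{k1})$. Differentiating once more and tracing produces $\Delta\cos\alpha$ as a sum of: (i) terms of schematic form $(\text{angle function})\cdot\nabla A$, obtained from the traced $\nabla_k h^\alpha_{ki}$; (ii) terms quadratic in $A$ times angle functions, from differentiating the frame vectors; and (iii) terms built from $\nabla_k y,\nabla_k z$ (equivalently, again $A$ times angle functions) which, reassembled using $y^2+z^2=\sin^2\alpha$, produce exactly $|\overline\nabla J_{\Sigma_t}|^2\cos\alpha$ plus further $A\ast A$ contributions.

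For the time part, from $\partial_t F=H$ and $\overline\nabla J=0$ we get $\frac{\partial}{\partial t}\cos\alpha=\langle J\overline\nabla_{e_1}H,e_2\rangle+\langle Je_1,\overline\nabla_{e_2}H\rangle$ together with the corrections forced by re-orthonormalizing $\{e_i\}$ against the evolving metric, $\partial_t g_{ij}=-2H^\alpha h^\alpha_{ij}$. Expanding $\overline\nabla_{e_i}H=-H^\alpha h^\alpha_{ij}e_j+\nabla^\perp_{e_i}H$ rewrites $\frac{\partial}{\partial t}\cos\alpha$ in terms of $\nabla^\perp H$ and of $A\ast H$ times angle functions. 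Now subtract: the crucial point is that the traced $\nabla A$ terms of item (i) and the $\nabla^\perp H$ terms match only modulo the ambient curvature, via the contracted Codazzi equation $\nabla_k h^\alpha_{ki}=\nabla_i H^\alpha+(\text{terms in }\overline{Rm})$ — this is how the curvature of $M$ enters the computation at all. Simultaneously the $A\ast A$ and $A\ast H$ terms times angle functions cancel, using the algebraic relations that \eqref{J} and $J^2=-\mathrm{Id}$ impose on the frame (in particular $y^2+z^2=\sin^2\alpha$), so that what survives is $|\overline\nabla J_{\Sigma_t}|^2\cos\alpha$ from item (iii) plus a bundle of ambient curvature terms contracted against $e_1,e_2,e_3,e_4$ and the angle functions $y,z,\cos\alpha$.

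The last step, which I expect to be the main obstacle, is to show that this surviving curvature expression collapses to the single term $\sin^2\alpha\,Ric(Je_1,e_2)$. Here one uses the full strength of the K\"ahler structure: the first Bianchi identity, the $J$-invariance $\overline R(JX,JY,Z,W)=\overline R(X,Y,Z,W)$ of the curvature tensor, the self-duality of $\omega$ noted after \eqref{J}, and the specific form of the angle functions (which carry total normal weight $\sin^2\alpha$), which together let one rewrite the partial contraction of $\overline{Rm}$ over the two tangent directions as a genuine Ricci contraction in the direction $(Je_1,e_2)$, scaled by $\sin^2\alpha$. As a consistency check, in the K\"ahler--Einstein case $Ric=c\,\overline g$ one has $Ric(Je_1,e_2)=c\cos\alpha$, so \eqref{EvolutionCos} reduces to the classical identity $(\frac{\partial}{\partial t}-\Delta)\cos\alpha=|\overline\nabla J_{\Sigma_t}|^2\cos\alpha+c\sin^2\alpha\cos\alpha$, which also confirms that the numerous $\nabla A$- and $A\ast A$-terms must indeed cancel in the difference $\frac{\partial}{\partial t}\cos\alpha-\Delta\cos\alpha$.
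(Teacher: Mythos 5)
The paper itself does not prove this lemma; it only recalls it from [CL1] and [HL2], so your attempt has to be measured against the standard derivation in those references, which your outline essentially reproduces. The frame computation you set up is the right one, and the key identities you state are correct: with your normalization, $\nabla_k\cos\alpha=y(h^4_{k1}+h^3_{k2})+z(h^4_{k2}-h^3_{k1})$, $\nabla_k y=-\cos\alpha(h^4_{k1}+h^3_{k2})$, $\nabla_k z=-\cos\alpha(h^4_{k2}-h^3_{k1})$, and these give at once $\Delta\cos\alpha=-|\overline{\nabla}J_{\Sigma_t}|^2\cos\alpha+y\sum_k(\nabla_kh^4_{k1}+\nabla_kh^3_{k2})+z\sum_k(\nabla_kh^4_{k2}-\nabla_kh^3_{k1})$ --- in fact more cleanly than you suggest: no leftover $A\ast A$ terms appear and $y^2+z^2=\sin^2\alpha$ is not needed at this stage. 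Similarly the time derivative (computed, e.g., from $\partial_t(F^{*}\omega)=d(i_H\omega)$ together with $\partial_t d\mu_t=-|H|^2d\mu_t$) is exactly $y(\nabla_1H^4+\nabla_2H^3)+z(\nabla_2H^4-\nabla_1H^3)$, the two $|H|^2\cos\alpha$ contributions cancelling, so after the contracted Codazzi equation the difference $(\frac{\partial}{\partial t}-\Delta)\cos\alpha$ equals $|\overline{\nabla}J_{\Sigma_t}|^2\cos\alpha$ plus an ambient curvature term of the schematic form $y\sum_{k\leq 2}(R_{4k1k}+R_{3k2k})+z\sum_{k\leq 2}(R_{4k2k}-R_{3k1k})$ (up to sign conventions). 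Up to here your plan is sound and matches [CL1], [HL2].

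The genuine gap is the final step, which you yourself flag as the main obstacle and then only assert: showing that this partial, purely tangential contraction of $\overline{Rm}$ equals $\sin^2\alpha\,Ric(Je_1,e_2)$. That identification is the real content of the lemma --- it is where the K\"ahler condition enters beyond $\overline{\nabla}\omega=0$ --- and it requires an explicit computation using the $J$-invariance $R(X,Y,Z,W)=R(X,Y,JZ,JW)$ together with the frame relations $Je_1=\cos\alpha\,e_2+y\,e_3+z\,e_4$, etc., to trade the missing normal-index contractions of the Ricci tensor for tangential ones weighted by the angle functions. Naming Bianchi, $J$-invariance and self-duality is the right toolbox, but without carrying out this computation you have not produced the coefficient $\sin^2\alpha$ nor the Ricci direction $(Je_1,e_2)$, which is precisely what the paper exploits later (e.g. $Ric(Je_1,e_2)=\cos\alpha R_{22}+\sin\alpha R_{23}$ in the proof of Theorem 3.1). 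Your K\"ahler--Einstein consistency check only tests the trace part of the curvature term, so it cannot substitute for this step: as written, your argument establishes an identity of the form $(\frac{\partial}{\partial t}-\Delta)\cos\alpha=|\overline{\nabla}J_{\Sigma_t}|^2\cos\alpha+(\text{ambient curvature term})$, valid for any parallel two-form as in [W], but not yet the stated equation.
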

Here
\begin{equation}
|\overline{\nabla}J_{\Sigma_{t}}|^{2}=|h^{4}_{1k}+h^{3}_{2k}|^{2}+|h^{4}_{2k}-h^{3}_{1k}|^{2}.
\end{equation}

$|\overline{\nabla}J_{\Sigma_{t}}|^{2}$ is independent of the choice of the frame and only depend on the orientation of the frame. It is proved in ([CL1], [HL1]) that
\begin{equation}
|\overline{\nabla}J_{\Sigma_{t}}|^{2}\geq \frac{1}{2}|H|^{2}
\end{equation}
and
\begin{equation}
|\nabla\cos\alpha|^{2}\leq \sin^{2}\alpha|\overline{\nabla}J_{\Sigma_{t}}|^{2}.
\end{equation}

\subsection{Curvatures}
In this subsection, we first recall the definitions of Riemannian curvature and the holomorphic sectional curvature; secondly, we give some estimates of Riemannian curvatures by holomorphic sectional curvature.

The Riemann curvature tensor $R$ of $(M,g)$ is defined by
\begin{equation*}
R(X, Y, Z, W)=-g(\nabla_{X}\nabla_{Y}Z-\nabla_{Y}\nabla_{X}Z-\nabla_{[X,Y]}Z, W)
\end{equation*}
for any vector fields $X, Y, Z,W$.

Set $R(X,Y)=R(X,Y,X,Y)$ and $R(X)=R(X,JX)$. Fix a point $p\in M$ and a two-dimensional plane $\Pi \subset T_{p}M$. The sectional curvature of $\Pi$ is defined by
\begin{equation*}
K(\Pi)=\frac{R(X,Y)}{g(X,X)g(Y,Y)-g(X,Y)^{2}}
\end{equation*}
where $\{X,Y\}$ is a basis of $\Pi$, we also denote it by $K(X,Y)$. For a K\"abler manifold $(M,g,J)$, if the two-dimensional plane $\Pi$ is spanned  by $\{X,JX\}$, i.e., $\Pi$ is a holomorphic plane, then the sectional curvature of $\Pi$ is called a holomorphic sectional curvature of $\Pi$, we denote it by $K(X)$, where $\{X,JX\}$ is a basis of $\Pi$. Then
\begin{equation*}
K(X)=\frac{R(X)}{g(X,X)^{2}}.
\end{equation*}

It is well known that we can express the sectional curvatures by holomorphic sectional curvatures.
\begin{theorem}
The sectional curvatures of $M$ can be determined by holomorphic sectional curvatures by
\begin{eqnarray}\label{2.1}
\begin{aligned}
R(X,Y)=&\frac{1}{32}[3R(X+JY)+3R(X-JY)-R(X+Y)-R(X-Y)\\
&-4R(X)-4R(Y)].
\end{aligned}
\end{eqnarray}
\end{theorem}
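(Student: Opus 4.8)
The identity is a classical polarization fact about K\"ahler algebraic curvature tensors (it can be found, e.g., in Kobayashi--Nomizu), and the plan is to verify it directly: expand the right-hand side by multilinearity of $R$ and collapse every resulting term back to $R(X,Y,X,Y)$ using the K\"ahler curvature symmetries. Throughout I would use the usual Riemann symmetries --- antisymmetry in the first and in the last pair of slots, symmetry under exchanging the two pairs, and the first Bianchi identity --- together with the K\"ahler identity in the equivalent forms $R(JA,JB,C,D)=R(A,B,C,D)$, $R(A,B,JC,JD)=R(A,B,C,D)$, $R(JA,JB,JC,JD)=R(A,B,C,D)$. I would also record at the outset that the quartic form $W\mapsto R(W)=R(W,JW,W,JW)$ is even and $J$-invariant, $R(-W)=R(W)=R(JW)$; this is used to reshuffle the six terms on the right.

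The first real step is to derive, from the first Bianchi identity and the K\"ahler identity, two auxiliary identities. Writing $B(X,Y):=R(X,Y,X,Y)$ and $A(X,Y):=R(X,JY,X,JY)$ (one checks directly that $A$ is symmetric), these are
\begin{equation*}
R(X,JX,Y,JY)=B(X,Y)+A(X,Y),\qquad R(X,JY,Y,JX)=A(X,Y).
\end{equation*}
Their role is that every curvature monomial of bidegree $(2,2)$ in $(X,Y)$ occurring in the expansion can be rewritten, via pair symmetry and the K\"ahler identity, as a combination of just the two forms $A$ and $B$.

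Next comes a symmetry reduction that cuts down the bookkeeping. The combination $3R(X+JY)+3R(X-JY)-R(X+Y)-R(X-Y)-4R(X)-4R(Y)$ is even in $Y$, because the pairs $R(X\pm JY)$ and $R(X\pm Y)$ kill all terms of odd degree in $Y$ while $R(X)$ and $R(Y)$ are homogeneous in $Y$ of degrees $0$ and $4$; it is moreover symmetric under $X\leftrightarrow Y$, since $J(X-JY)=Y+JX$ and the evenness and $J$-invariance of $W\mapsto R(W)$ give $R(Y+JX)=R(X-JY)$. Hence only the parts of bidegree $(4,0)$, $(2,2)$ and $(0,4)$ in $(X,Y)$ can be nonzero; setting $Y=0$ gives the $(4,0)$ part as $(3+3-1-1-4)R(X)=0$, and by the $X\leftrightarrow Y$ symmetry the $(0,4)$ part vanishes as well. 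It then remains to extract the bidegree $(2,2)$ part: expanding $R(X+JY)=R(X+JY,\,JX-Y,\,X+JY,\,JX-Y)$ and $R(X+Y)=R(X+Y,\,JX+JY,\,X+Y,\,JX+JY)$, keeping in each the six monomials of bidegree $(2,2)$ (the $\binom{4}{2}$ placements of two $X$-slots) and simplifying each of them by pair symmetry, the K\"ahler identity and the two auxiliary identities, one obtains that the bidegree $(2,2)$ part of $R(X+JY)$ equals $6B(X,Y)+2A(X,Y)$ and that of $R(X+Y)$ equals $2B(X,Y)+6A(X,Y)$. Therefore the bidegree $(2,2)$ part of the full combination is $6(6B+2A)-2(2B+6A)=32B(X,Y)=32R(X,Y,X,Y)$, and (\ref{2.1}) follows.

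There is no conceptual obstacle in this argument; the single point that needs care is the sign bookkeeping, since a factor of $J$ can be moved between slots only at the cost of $J^{2}=-\mathrm{Id}$ together with the K\"ahler identity, and one dropped sign --- in the derivation of the two auxiliary identities, or among the twelve bidegree-$(2,2)$ monomials --- would spoil the coefficients $3,3,-1,-1,-4,-4$ and the factor $\tfrac1{32}$. Organising the expansion by bidegree in $(X,Y)$, instead of expanding all six quartics in full, is precisely what keeps the number of monomials small at each stage and makes the verification routine.
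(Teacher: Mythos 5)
Your verification is correct. The paper does not actually prove this identity --- it simply records it as ``well known'' and moves on --- so there is no argument in the text to compare against, but your self-contained polarization argument supplies exactly what is missing and checks out in every detail.

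To record the checkpoints: both auxiliary identities are right --- $R(X,JY,Y,JX)=-R(X,JY,JY,X)=A(X,Y)$ follows from the K\"ahler identity on the last pair together with skew-symmetry, while the first Bianchi identity $R(X,JX,Y,JY)+R(JX,Y,X,JY)+R(Y,X,JX,JY)=0$, after simplifying the middle term to $-A(X,Y)$ (via $R(JX,Y,\cdot,\cdot)=-R(X,JY,\cdot,\cdot)$) and the last to $-B(X,Y)$, gives $R(X,JX,Y,JY)=A+B$. The evenness in $Y$ and the $X\leftrightarrow Y$ symmetry both hold, and the $(4,0)$ part indeed cancels by $3+3-1-1-4=0$, so only the bidegree-$(2,2)$ part matters. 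Expanding the six $(2,2)$ monomials of $R(X+JY,JX-Y,X+JY,JX-Y)$ gives $6B+2A$, the six of $R(X+Y,JX+JY,X+Y,JX+JY)$ give $2B+6A$, and $6(6B+2A)-2(2B+6A)=32B$, matching the factor $\tfrac1{32}$. This is a clean and complete proof of the statement the paper leaves implicit.
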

\begin{theorem}\label{2.2}
For any vector fields $X,Y$ and $Z$ on $M$,
\begin{equation}
R(X,Y,X,Z)=\frac{1}{4}(R(X,Y+Z)-R(X,Y-Z)).
\end{equation}
\end{theorem}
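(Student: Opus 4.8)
The plan is to recognize the identity as a polarization formula for the quadratic form $Y\mapsto R(X,Y)=R(X,Y,X,Y)$, so that nothing beyond the algebraic symmetries of the curvature tensor is needed. First I would fix $X$ and introduce the map $B_{X}(Y,Z):=R(X,Y,X,Z)$. It is bilinear in $(Y,Z)$ by the multilinearity of $R$, and it is symmetric because the pair-exchange symmetry $R(A,B,C,D)=R(C,D,A,B)$ gives $R(X,Y,X,Z)=R(X,Z,X,Y)$, i.e. $B_{X}(Y,Z)=B_{X}(Z,Y)$. Its associated quadratic form is exactly $B_{X}(Y,Y)=R(X,Y,X,Y)=R(X,Y)$ in the shorthand fixed earlier in the paper, and similarly $B_{X}(Y\pm Z,Y\pm Z)=R(X,Y\pm Z)$.

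Next I would just expand, using the bilinearity and symmetry of $B_{X}$: $R(X,Y+Z)=B_{X}(Y+Z,Y+Z)=R(X,Y)+2R(X,Y,X,Z)+R(X,Z)$, and likewise $R(X,Y-Z)=R(X,Y)-2R(X,Y,X,Z)+R(X,Z)$. Subtracting the second equation from the first, the $R(X,Y)$ and $R(X,Z)$ terms cancel and one is left with $R(X,Y+Z)-R(X,Y-Z)=4R(X,Y,X,Z)$; dividing by $4$ yields the stated formula $R(X,Y,X,Z)=\frac14\big(R(X,Y+Z)-R(X,Y-Z)\big)$. Equivalently, this is the textbook polarization identity $B(Y,Z)=\frac14\big(Q(Y+Z)-Q(Y-Z)\big)$ relating a symmetric bilinear form $B$ to its quadratic form $Q$, applied to $B=B_{X}$.

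There is essentially no obstacle here; the only points worth flagging are bookkeeping ones. The sign convention for $R$ adopted in the paper is immaterial, since the identity is homogeneous of degree one in $R$. More substantively, although this statement is placed among the Kähler-geometric facts and is used in tandem with the polarization (\ref{2.1}) of sectional curvatures by holomorphic sectional curvatures, its proof invokes only the pair-exchange symmetry of $R$ and is therefore valid on an arbitrary Riemannian manifold — in contrast to (\ref{2.1}), which genuinely uses the $J$-invariance of $R$. In the paper this elementary identity is what lets one trade expressions like $R(X,Y,X,Z)$ for differences of sectional curvatures, and hence (after combining with (\ref{2.1})) bound all the ambient-curvature terms appearing in the evolution equations (\ref{A}), (\ref{H}), (\ref{EvolutionCos}) in terms of $k_1$, $k_2$ and the pinching ratio $\lambda$.
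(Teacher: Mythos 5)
Your argument is correct, and it is the standard one. The paper in fact states this as a known fact and gives no proof at all, so there is nothing to compare against; the polarization-of-a-symmetric-bilinear-form argument you give (bilinearity of $B_X(Y,Z)=R(X,Y,X,Z)$ plus the pair-exchange symmetry $R(X,Y,X,Z)=R(X,Z,X,Y)$, then expand $B_X(Y\pm Z,Y\pm Z)$ and subtract) is exactly what the author is implicitly appealing to. Your remark that this identity needs no K\"ahler structure, unlike the companion identity (\ref{2.1}), is also accurate and worth keeping.
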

\begin{lemma}\label{BSC}
For any two orthogonal vectors $X$ and $Y$, set $|X|^{2}=a, |Y|^{2}=b, <JX,Y>=x$, then
\begin{equation}
R(X,Y)\leq \frac{1}{16}[(3(a+b)^{2}+12x^{2})k_{2}-(3a^{2}+3b^{2}+2ab)k_{1}]
\end{equation}
and
\begin{equation}
R(X,Y)\geq \frac{1}{16}[(3(a+b)^{2}+12x^{2})k_{1}-(3a^{2}+3b^{2}+2ab)k_{2}]
\end{equation}
\end{lemma}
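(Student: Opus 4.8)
The plan is to read off both inequalities directly from the polarization identity (\ref{2.1}), which writes an arbitrary sectional curvature as a fixed linear combination of six holomorphic curvature quantities $R(Z)=R(Z,JZ,Z,JZ)$. The only extra input needed is the two-sided pointwise bound
\[
k_{1}|Z|^{4}\leq R(Z)\leq k_{2}|Z|^{4}\qquad\text{for every }Z\in T_{p}M ,
\]
which is immediate from $K(Z)=R(Z)/g(Z,Z)^{2}$ together with $k_{1}=\min K(\cdot)$, $k_{2}=\max K(\cdot)$ (and holds trivially for $Z=0$).

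First I would apply (\ref{2.1}) verbatim to the given orthogonal pair $X,Y$, obtaining
\[
32\,R(X,Y)=3R(X+JY)+3R(X-JY)-R(X+Y)-R(X-Y)-4R(X)-4R(Y).
\]
For the upper bound, estimate the two terms carrying a positive coefficient from above by $k_{2}|\cdot|^{4}$ and the four terms carrying a negative coefficient from below by $k_{1}|\cdot|^{4}$, so that after the sign flip they too are bounded above; for the lower bound do exactly the opposite, interchanging the roles of $k_{1}$ and $k_{2}$. Everything then reduces to computing the six norms in terms of $a,b,x$.

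Here I would use that $J$ is orthogonal and skew-symmetric and that $X\perp Y$: one has $|X|^{2}=a$, $|Y|^{2}=|JY|^{2}=b$, $\langle X,Y\rangle=0$, and $\langle X,JY\rangle=-\langle JX,Y\rangle=-x$, whence $|X\pm JY|^{2}=a+b\mp 2x$ and $|X\pm Y|^{2}=a+b$. Consequently $|X+JY|^{4}+|X-JY|^{4}=2(a+b)^{2}+8x^{2}$, $|X+Y|^{4}+|X-Y|^{4}=2(a+b)^{2}$, and $|X|^{4}+|Y|^{4}=a^{2}+b^{2}$. Substituting these into the previous display gives
\[
32\,R(X,Y)\leq 6k_{2}\bigl[(a+b)^{2}+4x^{2}\bigr]-k_{1}\bigl[2(a+b)^{2}+4a^{2}+4b^{2}\bigr],
\]
and since $2(a+b)^{2}+4a^{2}+4b^{2}=2(3a^{2}+3b^{2}+2ab)$, dividing by $32$ yields precisely $R(X,Y)\leq\tfrac{1}{16}[(3(a+b)^{2}+12x^{2})k_{2}-(3a^{2}+3b^{2}+2ab)k_{1}]$. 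The lower bound comes out the same way with $k_{1}$ and $k_{2}$ swapped.

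The argument is essentially bookkeeping, so the only delicate point is getting the signs right — in particular the identity $\langle X,JY\rangle=-x$, which produces the cross term $\mp 2x$ in $|X\pm JY|^{2}$ and hence the coefficient $12x^{2}$ in the final estimate — together with correctly reducing the overall constant from $\tfrac{1}{32}$ to $\tfrac{1}{16}$. No conceptual difficulty remains once the identity (\ref{2.1}) is available.
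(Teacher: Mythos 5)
Your proof is correct and follows essentially the same route as the paper's: apply the polarization identity (\ref{2.1}), use $k_1|Z|^4\leq R(Z)\leq k_2|Z|^4$ with the correct sign on each of the six terms, and compute the norms $|X\pm JY|^2=a+b\mp 2x$, $|X\pm Y|^2=a+b$. The algebraic simplification and the resulting constants agree exactly with the paper.
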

\begin{proof}
Since
\begin{equation}
<X+JY, X+JY>=|X|^{2}-2<JX,Y>+|Y|^{2}=a+b-2x,
\end{equation}
and
\begin{equation}
<X-JY,X-JY>=a+b+2x, <X+Y,X+Y>=<X-Y,X-Y>=a+b,
\end{equation}
by (\ref{2.1}), we have
\begin{eqnarray}
\begin{aligned}
&\quad R(X,Y)\\
&\leq\frac{1}{32}[3(a+b-2x)^{2}k_{2}+3(a+b+2x)^{2}k_{2}-(a+b)^{2}k_{1}-(a+b)^{2}k_{1}-4a^{2}k_{1}-4b^{2}k_{1}]\\
&=\frac{1}{16}[(3(a+b)^{2}+12x^{2})k_{2}-(3a^{2}+3b^{2}+2ab)k_{1}]
\end{aligned}
\end{eqnarray}
and similarly
\begin{equation}
R(X,Y)\geq \frac{1}{16}[(3(a+b)^{2}+12x^{2})k_{1}-(3a^{2}+3b^{2}+2ab)k_{2}]
\end{equation}
\end{proof}
\begin{lemma}\label{ECO}
For the orthonormal basis $\{e_{1},e_{2},e_{3},e_{4}\}$ on $(M,g)$ along $\Sigma_{t}$ and takes the form $J$ as (\ref{J}). Then we have  the following estimates:
\begin{itemize}
\item[1)] $\frac{1}{4}[(3+3\cos^{2}\alpha)k_{1}-2k_{2}]\leq R_{1212}\leq \frac{1}{4}[(3+3\cos^{2}\alpha)k_{2}-2k_{1}]$;
\item[2)] $\frac{1}{4}[(3+3\cos^{2}\alpha)k_{1}-2k_{2}]\leq R_{3434}\leq \frac{1}{4}[(3+3\cos^{2}\alpha)k_{2}-2k_{1}]$;
\item[3)] $\frac{1}{4}[(3+3y^{2})k_{1}-2k_{2}]\leq R_{1313}\leq \frac{1}{4}[(3+3y^{2})k_{2}-2k_{1}]$;
\item[4)] $\frac{1}{4}[(3+3y^{2})k_{1}-2k_{2}]\leq R_{2424}\leq \frac{1}{4}[(3+3y^{2})k_{2}-2k_{1}]$;
\item[5)] $\frac{1}{4}[(3+3z^{2})k_{1}-2k_{2}]\leq R_{1414}\leq \frac{1}{4}[(3+3z^{2})k_{2}-2k_{1}]$;
\item[6)] $\frac{1}{4}[(3+3z^{2})k_{1}-2k_{2}]\leq R_{2323}\leq \frac{1}{4}[(3+3z^{2})k_{2}-2k_{1}]$;
\item[7)] $\frac{1}{32}[(23+6(\cos\alpha+y)^{2})k_{1}-(23+6(\cos\alpha-y)^{2})k_{2}]\leq R_{2131} \leq \frac{1}{32}[(23+6(\cos\alpha+y)^{2})k_{2}-(23+6(\cos\alpha-y)^{2})k_{1}]$;
\item[8)] $\frac{1}{32}[(23+6(\cos\alpha-y)^{2})k_{1}-(23+6(\cos\alpha+y)^{2})k_{2}]\leq R_{2434} \leq \frac{1}{32}[(23+6(\cos\alpha-y)^{2})k_{2}-(23+6(\cos\alpha+y)^{2})k_{1}]$;
\item[9)] $\frac{1}{32}[(23+6(\cos\alpha+y)^{2})k_{1}-(23+6(\cos\alpha-y)^{2})k_{2}]\leq R_{1242} \leq \frac{1}{32}[(23+6(\cos\alpha+y)^{2})k_{2}-(23+6(\cos\alpha-y)^{2})k_{1}]$;
\item[10)] $\frac{1}{32}[(23+6(\cos\alpha-z)^{2})k_{1}-(23+6(\cos\alpha+z)^{2})k_{2}]\leq R_{1232}\leq \frac{1}{32}[(23+6(\cos\alpha-z)^{2})k_{2}-(23+6(\cos\alpha+z)^{2})k_{1}]$;
\item[11)] $\frac{1}{32}[(23+6(\cos\alpha+z)^{2})k_{1}-(23+6(\cos\alpha-z)^{2})k_{2}]\leq R_{2141}\leq \frac{1}{32}[(23+6(\cos\alpha+z)^{2})k_{2}-(23+6(\cos\alpha-z)^{2})k_{1}]$;
\item[12)] $\frac{1}{32}[(23+6(y+z)^{2})k_{1}-(23+6(y-z)^{2})k_{2}]\leq R_{3141}\leq \frac{1}{32}[(23+6(y+z)^{2})k_{2}-(23+6(y-z)^{2})k_{1}]$;
\item[13)] $\frac{1}{32}[(23+6(y-z)^{2})k_{1}-(23+6(y+z)^{2})k_{2}]\leq R_{3242}\leq \frac{1}{32}[(23+6(y-z)^{2})k_{2}-(23+6(y+z)^{2})k_{1}]$;
\item[14)] $\frac{1}{12}[(10+6\cos^{2}\alpha)k_{1}-(10+3\sin^{2}\alpha)k_{2}]\leq R_{1234}\leq \frac{1}{12}[(10+6\cos^{2}\alpha)k_{2}-(10+3\sin^{2}\alpha)k_{1}].$
\item[15)] $\frac{1}{2}(6k_{1}-3k_{2})\leq R_{ii}\leq \frac{1}{2}(6k_{2}-3k_{1}) (1\leq i\leq 4).$
\item[16)] $|R_{34}|\leq \frac{29-6\cos^{2}\alpha}{16}(k_{2}-k_{1}).$
\end{itemize}
\end{lemma}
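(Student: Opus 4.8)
The plan rests on the two facts already established: the sectional-curvature pinching of Lemma~\ref{BSC}, and the polarization identity of Theorem~\ref{2.2}. The one preliminary is to read off from~(\ref{J}) the vectors $Je_1=-\cos\alpha\,e_2-y e_3-z e_4$, $Je_2=\cos\alpha\,e_1-z e_3+y e_4$, $Je_3=y e_1+z e_2-\cos\alpha\,e_4$, $Je_4=z e_1-y e_2+\cos\alpha\,e_3$, and to note $\cos^2\alpha+y^2+z^2=1$ (from $|Je_1|^2=1$), so that every $\langle Je_i,e_j\rangle$ is one of $0,\pm\cos\alpha,\pm y,\pm z$. For items 1)--6) the left-hand side is a sectional curvature $R_{ijij}=R(e_i,e_j)$ of the orthonormal pair $e_i,e_j$, so I would apply Lemma~\ref{BSC} with $a=b=1$ and $x=\langle Je_i,e_j\rangle$; since $3(a+b)^2=12$ and $3a^2+3b^2+2ab=8$, the bound $\tfrac{1}{16}[(12+12x^2)k_2-8k_1]$ becomes $\tfrac{1}{4}[(3+3x^2)k_2-2k_1]$ with $x^2$ equal to $\cos^2\alpha$, $y^2$ or $z^2$ according to the pair, and the lower bounds come from interchanging $k_1$ and $k_2$.

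For items 7)--13) each left-hand side has three distinct indices and, after the symmetries of the curvature tensor, equals a term $R(e_i,e_j,e_i,e_k)$ with $j\neq k$ (the repeated index playing the role of $i$). I would rewrite it by Theorem~\ref{2.2} as $\tfrac{1}{4}\bigl(R(e_i,e_j+e_k)-R(e_i,e_j-e_k)\bigr)$ and apply Lemma~\ref{BSC} to each sectional curvature with $a=1$, $b=2$, $x=\langle Je_i,e_j\pm e_k\rangle$. Here $3(a+b)^2=27$ and $3a^2+3b^2+2ab=19$, so the upper bound of the first sectional curvature minus the lower bound of the second is $\tfrac{1}{16}\bigl[(27+12x_+^2+19)k_2-(27+12x_-^2+19)k_1\bigr]$ — the two $19$'s add rather than cancel — and dividing by $4$ gives exactly $\tfrac{1}{32}\bigl[(23+6x_+^2)k_2-(23+6x_-^2)k_1\bigr]$, where $x_+=\langle Je_i,e_j+e_k\rangle$ and $x_-=\langle Je_i,e_j-e_k\rangle$; evaluating $x_\pm$ from the list of $Je_i$ reproduces exactly the combinations $(\cos\alpha\pm y)^2$, $(\cos\alpha\pm z)^2$, $(y\pm z)^2$ recorded in the statement (with $x_+^2$ paired to $k_2$), and the lower bounds again follow by interchanging $k_1,k_2$.

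Items 15) and 16) then need no further curvature estimate. Indeed $R_{ii}=\sum_{j\neq i}R_{ijij}$ is a sum of three of the quantities bounded in 1)--6), and adding their upper bounds the coefficient of $k_2$ is $\tfrac{1}{4}\bigl(9+3(\cos^2\alpha+y^2+z^2)\bigr)=3$, giving $\tfrac{1}{2}(6k_2-3k_1)$, with the lower bound obtained the same way; likewise $R_{34}=R_{3141}+R_{3242}$, and adding items 12) and 13) the quadratic parts combine through $(y+z)^2+(y-z)^2=2(y^2+z^2)=2\sin^2\alpha$, so $k_1$ and $k_2$ both occur with coefficient $\tfrac{1}{32}(46+12\sin^2\alpha)$, whence $|R_{34}|\leq\tfrac{1}{32}(46+12\sin^2\alpha)(k_2-k_1)=\tfrac{29-6\cos^2\alpha}{16}(k_2-k_1)$.

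The one genuinely different case, which I expect to be the main obstacle, is item 14): $R_{1234}=R(e_1,e_2,e_3,e_4)$, whose four indices are all distinct, so that neither tool applies directly. My plan is to first establish the double-polarization identity
$$R(A,B,C,D)+R(C,B,A,D)=\tfrac{1}{8}\bigl(R(A+C,B+D)-R(A+C,B-D)-R(A-C,B+D)+R(A-C,B-D)\bigr),$$
whose right-hand side contains only terms $R(V,W)=R(V,W,V,W)$ and hence, through formula~(\ref{2.1}), only holomorphic sectional curvatures; then to apply it with $(A,B,C,D)=(e_1,e_2,e_3,e_4)$ and with $(A,B,C,D)=(e_1,e_3,e_2,e_4)$, using the first Bianchi identity (which gives $R(e_3,e_2,e_1,e_4)=R_{1234}-R_{1324}$ and $R(e_2,e_3,e_1,e_4)=R_{1324}-R_{1234}$, with $R_{1324}:=R(e_1,e_3,e_2,e_4)$) to turn the two left-hand sides into $2R_{1234}-R_{1324}$ and $2R_{1324}-R_{1234}$; solving this $2\times2$ system gives $R_{1234}=\tfrac{1}{12}S_1+\tfrac{1}{24}S_2$, where $S_1,S_2$ are the two right-hand sides, which is then estimated by bounding every holomorphic sectional curvature $R(V)$ between $k_1|V|^4$ and $k_2|V|^4$ and simplifying with $\cos^2\alpha+y^2+z^2=1$. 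The delicate point — where the real work lies — is this last step: one must expand $S_1$ and $S_2$ completely by~(\ref{2.1}) and cancel the common holomorphic-sectional-curvature contributions \emph{before} passing to inequalities, since inserting the crude interval bounds of Lemma~\ref{BSC} into $S_1,S_2$ separately yields only a valid but strictly weaker estimate than the sharp one $\tfrac{1}{12}\bigl[(10+6\cos^2\alpha)k_2-(10+3\sin^2\alpha)k_1\bigr]$ asserted in 14). Outside item 14) the proof is routine substitution into Lemma~\ref{BSC}, Theorem~\ref{2.2} and the first Bianchi identity; item 14) is where the bookkeeping must be carried out with care.
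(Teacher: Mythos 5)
Your handling of items 1)--13), 15) and 16) matches the paper's proof exactly: Lemma~\ref{BSC} with $a=b=1$ for the six sectional curvatures, Theorem~\ref{2.2} plus Lemma~\ref{BSC} with $a=1, b=2$ for the seven mixed terms, and summation for the Ricci terms. The interesting divergence is in item 14), and there your instinct that it is ``the main obstacle'' is not quite right: the paper does \emph{not} need to expand down to holomorphic sectional curvatures and cancel by hand. Instead of your two-step scheme (two applications of the double-polarization identity, Bianchi, solve the $2\times 2$ system for $R_{1234}$ and $R_{1324}$), the paper uses the single symmetric identity
\begin{eqnarray*}
24R_{1234}
&=&R(e_{1}+e_{3},e_{2}+e_{4})-R(e_{1}+e_{3},e_{2}-e_{4})-R(e_{1}-e_{3},e_{2}+e_{4})+R(e_{1}-e_{3},e_{2}-e_{4})\\
&&-R(e_{1}+e_{4},e_{2}+e_{3})+R(e_{1}+e_{4},e_{2}-e_{3})+R(e_{1}-e_{4},e_{2}+e_{3})-R(e_{1}-e_{4},e_{2}-e_{3}),
\end{eqnarray*}
in which the unwanted $R_{1324}$ already cancels (the first four terms equal $8(2R_{1234}-R_{1324})$ and the last four equal $8(R_{1234}+R_{1324})$). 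Crucially, in this combination the eight arguments $V=e_i\pm e_j$, $W=e_k\pm e_l$ all have $|V|^2=|W|^2=2$, and the inner products $\langle JV,W\rangle$ come out as $2\cos\alpha$ (four times, with $+$ sign), $2z$ (twice, with $-$ sign), and $2y$ (twice, with $-$ sign); plugging Lemma~\ref{BSC} into each term then gives precisely $(20+12\cos^{2}\alpha)k_{2}-(20+6\sin^{2}\alpha)k_{1}$, i.e.\ the stated bound, with no further cancellation needed. Your decomposition $24R_{1234}=2S_{1}+S_{2}$ is equally valid as an identity, but if you stop at Lemma~\ref{BSC} on each of its eight sectional curvatures you obtain $(30+12\cos^{2}\alpha+6z^{2})k_{2}-(30+12z^{2}+6y^{2})k_{1}$, which exceeds the claimed bound by $(10+6z^{2})(k_{2}-k_{1})\geq 0$ --- so you are right that you would have to push all the way to holomorphic sectional curvatures to recover the sharp estimate, but this extra labor is an artifact of the choice of polarization, not an inherent difficulty of item 14). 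With the paper's 8-term identity, 14) is no harder than 7)--13).
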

\begin{proof}
By (\ref{J}), we have
\begin{itemize}
\item $Je_{1}=\cos\alpha e_{2}+y e_{3}+z e_{4},$
\item $Je_{2}=-\cos\alpha e_{1}+z e_{3}-y e_{4},$
\item $Je_{3}=-y e_{1}-z e_{2}+\cos\alpha e_{4},$
\item $Je_{4}=-ze_{1}+y e_{2}-\cos\alpha e_{3}.$
\end{itemize}
Hence $<Je_{1},e_{2}>=<Je_{3},e_{4}>=\cos\alpha$, $<Je_{1}, e_{3}>=<Je_{4},e_{2}>=y$, $<Je_{1},e_{4}>=<Je_{2},e_{3}>=z$, then by Lemma \ref{BSC}, we get 1)-6).

By Theorem \ref{2.2},
\begin{equation}
R_{1213}=\frac{1}{4}(R(e_{1}, e_{2}+e_{3})-R(e_{1}, e_{2}-e_{3})).
\end{equation}
Since $Je_{1}=\cos\alpha e_{2}+y e_{3}+z e_{4}$, $<Je_{1}, e_{2}+e_{3}>=\cos\alpha+y$ and $<Je_{1}, e_{2}-e_{3}>=\cos\alpha-y$. Then by Lemma \ref{BSC},
\begin{equation}
\frac{1}{16}[(27+12(\cos\alpha+y)^{2})k_{1}-19k_{2}]\leq R(e_{1}, e_{2}+e_{3})\leq \frac{1}{16}[(27+12(\cos\alpha+y)^{2})k_{2}-19k_{1}],
\end{equation}
and
\begin{equation}
\frac{1}{16}[(27+12(\cos\alpha-y)^{2})k_{1}-19k_{2}]\leq R(e_{1}, e_{2}-e_{3})\leq \frac{1}{16}[(27+12(\cos\alpha-y)^{2})k_{2}-19k_{1}],
\end{equation}
Hence
\begin{eqnarray}
\begin{aligned}
R_{1213}&\leq\frac{1}{64}[(46+12(\cos\alpha+y)^{2})k_{2}-(46+12(\cos\alpha-y)^{2})k_{1}]\\
&=\frac{1}{32}[(23+6(\cos\alpha+y)^{2})k_{2}-(23+6(\cos\alpha-y)^{2})k_{1}]
\end{aligned}
\end{eqnarray}
and
\begin{equation}
R_{1213}\geq\frac{1}{32}[(23+6(\cos\alpha+y)^{2})k_{1}-(23+6(\cos\alpha-y)^{2})k_{2}]
\end{equation}
Hence we obtain 7).

Using Theorem \ref{2.2}, Lemma \ref{BSC} and the same argument as in the proof of 7) , we can obtain 8)-13).

It is easy to check the following identity
\begin{eqnarray}
\begin{aligned}
&24R_{1234}\\
=&R(e_{1}+e_{3},e_{2}+e_{4})-R(e_{1}+e_{3},e_{2}-e_{4})-R(e_{1}-e_{3},e_{2}+e_{4})+R(e_{1}-e_{3},e_{2}-e_{4})\\
&-R(e_{1}+e_{4},e_{2}+e_{3})+R(e_{1}+e_{4},e_{2}-e_{3})+R(e_{1}-e_{4},e_{2}+e_{3})-R(e_{1}-e_{4},e_{2}-e_{3})
\end{aligned}
\end{eqnarray}
Then by Lemma \ref{BSC}, we get
\begin{equation}
2[(10+6\cos^{2}\alpha)k_{1}-(10+3\sin^{2}\alpha)k_{2}]\leq 24R_{1234} \leq 2[(10+6\cos^{2}\alpha)k_{2}-(10+3\sin^{2}\alpha)k_{1}].
\end{equation}
Hence
\begin{equation}
\frac{1}{12}[(10+6\cos^{2}\alpha)k_{1}-(10+3\sin^{2}\alpha)k_{2}]\leq R_{1234}\leq \frac{1}{12}[(10+6\cos^{2}\alpha)k_{2}-(10+3\sin^{2}\alpha)k_{1}]
\end{equation}
Therefore we get 14).

By 1)-14), it is easy to get 15) and 16).
\end{proof}

\section{Lower bound along the mean curvature flow}
In this section, we following the argument in [LY] to prove the first main theorem of this paper, which improving the main theorem in [LY].
\begin{theorem}
Suppose $M$ is a K\"ahler surface with positive holomorphic sectional curvatures. If $1\leq \lambda <2$ and $\cos\alpha(\cdot,0)\geq \delta >\frac{29(\lambda-1)}{\sqrt{(48-24\lambda)^{2}+(29\lambda-29)^{2}}}$, then
along the flow
\begin{equation}
(\frac{\partial}{\partial t}-\Delta)\cos\alpha \geq |\overline{\nabla}J_{\Sigma_{t}}|^{2}\cos\alpha +C\sin^{2}\alpha,
\end{equation}
where $C$ is a positive constant depending only on $k_1,k_2$ and $\delta$. As a corollary, $\min_{\Sigma_{t}}\cos\alpha$ is increasing with respect to $t$. In particular, at each time $t$, $\Sigma_{t}$ is symplectic.
\end{theorem}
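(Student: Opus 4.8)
The plan is to reduce the statement to a single pointwise curvature inequality and then finish with the parabolic maximum principle. By the evolution equation \eqref{EvolutionCos},
\[
\Big(\frac{\partial}{\partial t}-\Delta\Big)\cos\alpha=|\overline{\nabla}J_{\Sigma_t}|^2\cos\alpha+\sin^2\alpha\,Ric(Je_1,e_2),
\]
so the theorem follows once I establish the pointwise estimate: \emph{at any point of any $\Sigma_t$ with $\cos\alpha\ge\delta$ one has $Ric(Je_1,e_2)\ge C$}, for a constant $C=C(k_1,k_2,\delta)>0$. Granting this, the displayed inequality is immediate, and since its right-hand side is then $\ge 0$ wherever $\cos\alpha\ge\delta>\delta_0:=\frac{29(\lambda-1)}{\sqrt{(48-24\lambda)^2+(29\lambda-29)^2}}$, a standard continuity-in-$t$ argument together with the maximum principle (at a spatial minimum $\Delta\cos\alpha\ge0$, so $\frac{d}{dt}\min_{\Sigma_t}\cos\alpha\ge0$) shows that the condition $\cos\alpha\ge\delta$ is never violated, that $\min_{\Sigma_t}\cos\alpha$ is nondecreasing, and that each $\Sigma_t$ is symplectic.

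For the pointwise estimate I would work at a fixed point with the adapted orthonormal frame $\{e_1,e_2,e_3,e_4\}$ of Lemma \ref{ECO}, in which, by \eqref{J}, $Je_1=\cos\alpha\,e_2+y\,e_3+z\,e_4$ with $y^2+z^2=\sin^2\alpha$. The scalar $Ric(Je_1,e_2)$ is independent of the choice of oriented tangent frame (because $(X,Y)\mapsto Ric(JX,Y)$ is skew on a K\"ahler manifold, so the rotation cross-terms cancel) and of the choice of normal frame; a rotation of $\{e_3,e_4\}$ rotates the pair $(y,z)$ while preserving the normal form \eqref{J}, so I may assume $z=0$, $y=\sin\alpha$. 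Expanding $Ric$ in this frame and using the symmetries of the curvature tensor to discard vanishing terms gives
\[
Ric(Je_1,e_2)=\cos\alpha\,(R_{1212}+R_{2323}+R_{2424})+\sin\alpha\,(R_{1312}+R_{4342}),
\]
where $R_{1312}=R_{2131}$ and $R_{4342}=R_{2434}$ in the notation of Lemma \ref{ECO}.

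Next I would feed in Lemma \ref{ECO}. Items 1), 4), 6) with $z=0$ sum to $Ric(e_2,e_2)\ge\frac14(12k_1-6k_2)=\frac32(2k_1-k_2)$, which is positive since $\lambda<2$; this is the good term. For the error term, items 7) and 8) with $y=\sin\alpha$ have the decisive feature that the $(\cos\alpha\pm y)^2$ pieces add up to $12\cos^2\alpha+12y^2=12$, so the $\sin2\alpha$ contributions cancel and $|R_{1312}+R_{4342}|\le\frac{58}{32}(k_2-k_1)=\frac{29}{16}(k_2-k_1)$. Hence
\[
Ric(Je_1,e_2)\ge\frac32(2k_1-k_2)\cos\alpha-\frac{29}{16}(k_2-k_1)\sin\alpha=k_1\Big[\frac32(2-\lambda)\cos\alpha-\frac{29}{16}(\lambda-1)\sin\alpha\Big],
\]
and the bracket, which is increasing in $\cos\alpha$ on $[0,1]$, is positive precisely when $24(2-\lambda)\cos\alpha>29(\lambda-1)\sin\alpha$, i.e. when $\cos\alpha>\delta_0$. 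Thus for $\cos\alpha\ge\delta>\delta_0$ one may take $C=k_1\big[\frac32(2-\lambda)\delta-\frac{29}{16}(\lambda-1)\sqrt{1-\delta^2}\big]>0$, depending only on $k_1,k_2,\delta$, which completes the pointwise estimate and hence the theorem.

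The step I expect to be the real obstacle — and the one carrying the improvement over [LY] — is extracting the \emph{sharp} constant $\frac{29}{16}$ in the error term. This rests on two points: choosing the normal frame so that $z=0$, which reduces the ``bad'' part of $Ric(Je_1,e_2)$ to the single term $\sin\alpha\,Ric(e_3,e_2)$ (for general $y,z$ the triangle inequality on $y\,Ric(e_3,e_2)+z\,Ric(e_4,e_2)$ loses a factor $\sqrt2$ and degrades the threshold to a non-polynomial expression); and the exact cancellation of the $\sin2\alpha$ terms in Lemma \ref{ECO} 7)+8). The remaining ingredients — the Ricci expansion in the frame, the bookkeeping with the curvature bounds of Lemma \ref{ECO}, and the maximum-principle wrap-up — are routine.
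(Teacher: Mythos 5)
Your proof is correct and follows essentially the same route as the paper: after normalizing the frame so that $z=0$, $y=\sin\alpha$, you decompose $Ric(Je_1,e_2)=\cos\alpha\,Ric(e_2,e_2)+\sin\alpha\,Ric(e_2,e_3)$ and invoke the same curvature bounds from Lemma~\ref{ECO} --- items~1), 4), 6) for the Ricci term $Ric(e_2,e_2)\ge 3k_1-\tfrac32k_2$ and items~7), 8) for $|R_{2131}+R_{2434}|\le\tfrac{29}{16}(k_2-k_1)$ (with the cancellation of the $(\cos\alpha\pm y)^2$ cross terms) --- leading to the identical threshold $\cos\alpha>\frac{29(\lambda-1)}{\sqrt{(48-24\lambda)^2+(29\lambda-29)^2}}$. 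The only differences are cosmetic: you make explicit the frame-invariance remarks, the constant $C=k_1\big[\tfrac32(2-\lambda)\delta-\tfrac{29}{16}(\lambda-1)\sqrt{1-\delta^2}\big]$, and the maximum-principle wrap-up, all of which the paper leaves implicit.
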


\begin{proof}
For simplicity, we can take $y=\sin\alpha, z=0$ in the form of $J$. In order to prove this theorem, we need to estimate ${\rm Ric}(Je_{1}, e_{2})$. then
\begin{eqnarray}
\begin{aligned}
{\rm Ric}(Je_{1}, e_{2})&=\sum_{i=1}^{4}R(Je_{1}, e_{i}, e_{2}, e_{i})\\
&=\sum_{i=1}^{4}R(\cos\alpha e_{2}+\sin\alpha e_{3}, e_{i}, e_{2}, e_{i})\\
&=\cos\alpha R_{22}+\sin\alpha(R_{1213}+R_{4243})\\
&=\cos\alpha R_{22}+\sin\alpha R_{23}.
\end{aligned}
\end{eqnarray}
By Lemma \ref{ECO}, we have
\begin{equation}
R_{22}=R_{1212}+R_{3232}+R_{4242} \geq 3k_{1}-\frac{3}{2}k_{2},
\end{equation}
and
\begin{equation}
|R_{23}|=|R_{1213}+R_{4243}|\leq\frac{29}{16}(k_{2}-k_{1}).
\end{equation}
Hence we have
\begin{eqnarray}
\begin{aligned}
{\rm Ric}(Je_{1}, e_{2})&\geq \cos\alpha (3k_{1}-\frac{3}{2}k_{2})-\sqrt{1-\cos^{2}\alpha}\frac{29}{16}(k_{2}-k_{1})\\
&=(3\cos\alpha+\frac{29}{16}\sqrt{1-\cos^{2}\alpha})k_{1}-(\frac{3}{2}\cos\alpha+\frac{29}{16}\sqrt{1-\cos^{2}\alpha})k_{2}.
\end{aligned}
\end{eqnarray}
It follows that if $1\leq \lambda<2$ and $\cos\alpha >\frac{29(\lambda-1)}{\sqrt{(48-24\lambda)^{2}+(29\lambda-29)^{2}}}$, then the right hand side of (28) is positive. Hence we obtain the Theorem 4.1.
\end{proof}
\begin{remark}
Comparing to the proof in [LY], we do more carefully to estimate the term $R_{1213}+R_{4243}$ by Lemma \ref{ECO}.
\end{remark}

As same as Corollary 1.2 and Theorem 1.3 in [LY], we also have the following Corollary and Theorem.

Arguing as in [CW] by strong maximum principle, we have
\begin{cor}
Suppose $M$ is a K\"ahler surface with positive holomorphic sectional curvatures and $1\leq \lambda<2$, then every symplectic minimal surface satisfying
\begin{equation*}
\cos\alpha>\frac{29(\lambda-1)}{\sqrt{(48-24\lambda)^{2}+(29\lambda-29)^{2}}}
\end{equation*}
in $M$ is a holomorphic curve.
\end{cor}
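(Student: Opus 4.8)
The plan is to regard a symplectic minimal surface as a stationary solution of the mean curvature flow and then invoke Theorem 4.1. If $\Sigma$ is minimal, i.e. $H\equiv 0$, then $\Sigma_{t}\equiv\Sigma$ solves the mean curvature flow and $\frac{\partial}{\partial t}\cos\alpha\equiv 0$. Since $\Sigma$ is compact, $\min_{\Sigma}\cos\alpha$ is attained and, by hypothesis, is still strictly larger than $\frac{29(\lambda-1)}{\sqrt{(48-24\lambda)^{2}+(29\lambda-29)^{2}}}$; taking $\delta=\min_{\Sigma}\cos\alpha$ in Theorem 4.1 produces a constant $C=C(k_{1},k_{2},\delta)>0$ with
\[-\Delta\cos\alpha=\left(\frac{\partial}{\partial t}-\Delta\right)\cos\alpha\geq|\overline{\nabla}J_{\Sigma}|^{2}\cos\alpha+C\sin^{2}\alpha\geq 0 .\]
Thus $\cos\alpha$ is superharmonic on the closed surface $\Sigma$.

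Next I would apply the strong maximum principle, exactly as in [CW]: a superharmonic function on a compact connected manifold without boundary attains its minimum at an interior point, hence is constant. Therefore $\cos\alpha\equiv\mathrm{const}$ and in particular $\Delta\cos\alpha\equiv 0$. Substituting this back into the displayed inequality forces $|\overline{\nabla}J_{\Sigma}|^{2}\cos\alpha+C\sin^{2}\alpha=0$; since both summands are nonnegative and $C>0$, we conclude $\sin^{2}\alpha\equiv 0$, i.e. $\cos\alpha\equiv 1$, so $\Sigma$ is a holomorphic curve. (Alternatively, one may bypass the strong maximum principle and simply integrate $\Delta\cos\alpha\le 0$ over the closed $\Sigma$ to deduce $\Delta\cos\alpha\equiv 0$, then argue as above.)

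The argument is essentially routine once Theorem 4.1 is in hand, so I do not expect a genuine obstacle; the only points needing a little care are: (i) using the compactness of $\Sigma$ together with the \emph{strict} inequality in the hypothesis to ensure that the constant $C$ supplied by Theorem 4.1 is strictly positive — it is this strict positivity that forces $\sin\alpha\equiv 0$ once one knows $\Delta\cos\alpha\equiv 0$; and (ii) noting that the borderline value $\lambda=1$ is harmless, since then the threshold is $0$ and the relevant curvature term satisfies ${\rm Ric}(Je_{1},e_{2})\geq\tfrac{3}{2}k_{1}\cos\alpha>0$ automatically.
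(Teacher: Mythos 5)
Your argument is correct and follows essentially the same route the paper indicates: on a minimal surface ($H\equiv 0$) the evolution equation degenerates to $-\Delta\cos\alpha=|\overline{\nabla}J_{\Sigma}|^{2}\cos\alpha+\mathrm{Ric}(Je_{1},e_{2})\sin^{2}\alpha\geq 0$, where the Ricci term is bounded below by the positive constant $C$ furnished by the theorem of Section~3 once one takes $\delta=\min_{\Sigma}\cos\alpha$; either the strong maximum principle or integration over the closed surface then forces $\sin\alpha\equiv 0$. The paper states this conclusion only by citing the strong-maximum-principle argument of [CW], and your write-up simply fills in those details.
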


Arguing exactly in the same way as in [CL1] or [W], we have
\begin{theorem}
Under the same condition of Theorem 3.1, then the symplectic mean curvature flow has no type I singularity at any $T>0$.
\end{theorem}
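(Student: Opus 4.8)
The plan is to rule out a type~I singularity by the standard parabolic blow-up, reducing to a self-shrinker in $\mathbb{C}^{2}$ and then exploiting the evolution equation (\ref{EvolutionCos}) together with Theorem~3.1, exactly along the lines of [CL1] and [W]. Suppose, to the contrary, that $\Sigma_{t}$, $t\in[0,T)$, develops a type~I singularity at time $T$: there is $C_{0}>0$ with $\sup_{\Sigma_{t}}|A|^{2}\le \frac{C_{0}}{2(T-t)}$ for $t$ near $T$, while $\sup_{\Sigma_{t}}|A|^{2}\to\infty$. Choose a singular point $x_{0}\in M$ of the flow, pass to geodesic normal coordinates of $M$ centered at $x_{0}$, and perform the parabolic rescaling $F\mapsto (2(T-t))^{-1/2}(F-x_{0})$, $s=-\tfrac12\log(2(T-t))\to\infty$. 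The rescaled K\"ahler structures converge in $C^{\infty}_{loc}$ to the flat K\"ahler structure on $\mathbb{C}^{2}=T_{x_{0}}M$; the rescaled second fundamental forms satisfy $|\widetilde A|^{2}=2(T-t)|A|^{2}\le C_{0}$; and, by Huisken's monotonicity formula together with the standard compactness, along a sequence $s_{j}\to\infty$ the rescaled surfaces converge smoothly on compact sets to a self-shrinker $\widetilde\Sigma$ in $\mathbb{C}^{2}$, i.e.\ $H+\tfrac12F^{\perp}=0$ (equivalently, the limit flow is the homothetic soliton $\widetilde\Sigma_{s}=\sqrt{-s}\,\widetilde\Sigma$). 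If $\widetilde\Sigma$ were a plane, the Gaussian density of the flow at $(x_{0},T)$ would be $1$ and White's regularity theorem (Theorem~4.1) would force $(x_{0},T)$ to be regular, a contradiction; hence, as in [CL1] and [W], $\widetilde\Sigma$ is not flat, $|\widetilde A|\not\equiv 0$.

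Next I would feed in the K\"ahler angle. Since $\cos\alpha$ is invariant under homotheties and, by the corollary to Theorem~3.1, $\min_{\Sigma_{t}}\cos\alpha$ is nondecreasing in $t$, in the limit $\cos\widetilde\alpha\ge\delta>0$ everywhere on $\widetilde\Sigma$, where $\widetilde\alpha$ is the K\"ahler angle of $\widetilde\Sigma$ in $\mathbb{C}^{2}$. The limit flow lies in a flat surface, which is K\"ahler-Einstein with vanishing Ricci tensor, so (\ref{EvolutionCos}) reduces along it to $(\frac{\partial}{\partial t}-\Delta)\cos\widetilde\alpha=|\overline{\nabla}J_{\widetilde\Sigma_{s}}|^{2}\cos\widetilde\alpha$. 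Writing the soliton flow as the radial homothety $s\mapsto\sqrt{-s}\,\widetilde\Sigma$ and decomposing its velocity into the normal part $H$ and the tangential part $-\tfrac12F^{T}$, and using that $\cos\widetilde\alpha$ is scale invariant while $\Delta$ and $|\overline{\nabla}J|^{2}$ carry weight $-2$, one obtains on $\widetilde\Sigma$ the drift-Laplacian identity
\[
\Delta\cos\widetilde\alpha-\tfrac12\langle F^{T},\nabla\cos\widetilde\alpha\rangle=-\,|\overline{\nabla}J_{\widetilde\Sigma}|^{2}\cos\widetilde\alpha .
\]

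To conclude I would multiply by the Gaussian weight $e^{-|x|^{2}/4}$ and integrate over $\widetilde\Sigma$. The left-hand side equals $e^{|x|^{2}/4}\,\mathrm{div}\!\big(e^{-|x|^{2}/4}\nabla\cos\widetilde\alpha\big)$, and since $\widetilde\Sigma$ has at most Euclidean volume growth while $|\nabla\cos\widetilde\alpha|$ and $|\overline{\nabla}J_{\widetilde\Sigma}|$ are bounded (being controlled by $|\widetilde A|$), the divergence theorem kills the integral of the left-hand side, giving
\[
\int_{\widetilde\Sigma}|\overline{\nabla}J_{\widetilde\Sigma}|^{2}\,\cos\widetilde\alpha\;e^{-|x|^{2}/4}\,d\mu=0 .
\]
As $\cos\widetilde\alpha\ge\delta>0$, this forces $|\overline{\nabla}J_{\widetilde\Sigma}|^{2}\equiv 0$; then $\tfrac12|H|^{2}\le|\overline{\nabla}J_{\widetilde\Sigma}|^{2}=0$, so $H\equiv 0$ and $F^{\perp}=-2H\equiv 0$, i.e.\ the position vector is everywhere tangent to $\widetilde\Sigma$, so $\widetilde\Sigma$ is a smooth minimal cone with vertex at the origin, hence a plane. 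This contradicts $|\widetilde A|\not\equiv 0$, so no type~I singularity can occur.

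The K\"ahler-angle bookkeeping above is routine once the blow-up has been set up; the main obstacle is the soft analytic input, namely establishing the smooth subconvergence of the rescaled flows to a self-shrinker (Huisken's monotonicity formula, the compactness/regularity package, and White's theorem to exclude the flat limit) and justifying the integration by parts on the noncompact shrinker (Euclidean volume growth together with the boundedness of $|\widetilde A|$). All of this is carried out in [CL1] and [W] in the K\"ahler-Einstein setting, and the only new ingredient needed here is the uniform positive lower bound $\cos\widetilde\alpha\ge\delta$ supplied by Theorem~3.1 and its corollary, which is what makes the weighted-integral step go through.
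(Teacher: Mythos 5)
Your proof is correct and is precisely the blow-up-to-a-self-shrinker argument of [CL1] and [W] that the paper invokes; the paper itself gives no independent proof beyond the citation ``arguing exactly as in [CL1] or [W],'' so you have simply filled in those details. The only genuinely new ingredient here, as you correctly identify, is that the uniform lower bound $\cos\alpha\ge\delta>0$ propagated by Theorem 3.1 (in place of the K\"ahler--Einstein hypothesis of [CL1], [W]) is what makes the Gaussian-weighted integration by parts force $|\overline{\nabla}J_{\widetilde\Sigma}|\equiv 0$ on the shrinker, and the rest of the argument is unchanged.
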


\section{When $\cos\alpha$ is close to $1$}
In this section, we will use the same argument of Han and Li [HL0], to prove that  K\"ahler manifold $M$ with positive holomorphic sectional curvature and $1\leq \lambda<2$, when $\cos\alpha$ is close enough to 1, then the mean curvature flow exists globally and converges to a holomorphic curve.

\begin{prop}
Suppose that $M$ is a K\"ahler surface with positive holomorphic sectional curvature and $1\leq \lambda<2$. Let $\alpha$ be the K\"ahler angle of the surface $\Sigma_{t}$ which evolves by the mean curvature flow. Suppose that $\cos\alpha(\cdot,0)>\frac{58(\lambda-1)}{\sqrt{(48-24)^{2}+(58\lambda-58)^{2}}}$. Then
\begin{eqnarray}
\begin{aligned}
&\int_{\Sigma_{t}}\frac{\sin^{2}\alpha}{\cos\alpha}d\mu_{t}\leq C_{0}e^{-\frac{3}{4}(2-\lambda)k_{1}t},\\
&\int_{t}^{t+1}\int_{\Sigma_{t}}|H|^{2}d\mu_{t}dt\leq C_{0}e^{-\frac{3}{4}(2-\lambda)k_{1}t},
\end{aligned}
\end{eqnarray}
where $C_{0}$ is a constant which depends only on the initial surface, $C_{0}=\int_{\Sigma_{0}}\frac{\sin^{2}\alpha(x,0)}{\cos\alpha(x,0)}d\mu_{0}$.
\end{prop}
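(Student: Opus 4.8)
The plan is to run a weighted $L^1$ energy estimate for the quantity $\tfrac{\sin^2\alpha}{\cos\alpha}=\sec\alpha-\cos\alpha$, exactly as in Han--Li [HL0], using the evolution equation (\ref{EvolutionCos}) together with the curvature estimates of Lemma \ref{ECO}. First I would record that under the mean curvature flow the area element evolves by $\frac{\partial}{\partial t}d\mu_t=-|H|^2 d\mu_t$, and compute the evolution of $f:=\frac{\sin^2\alpha}{\cos\alpha}$. Writing $f$ as a function of $\cos\alpha$ and applying (\ref{EvolutionCos}), one gets
\[
\left(\frac{\partial}{\partial t}-\Delta\right)f
= -\Bigl(\frac{1}{\cos^2\alpha}+1\Bigr)\,|\overline\nabla J_{\Sigma_t}|^2\cos\alpha
  -\frac{2\sin^2\alpha}{\cos^3\alpha}|\nabla\cos\alpha|^2
  -\Bigl(\frac{1}{\cos^2\alpha}+1\Bigr)\sin^2\alpha\,\mathrm{Ric}(Je_1,e_2),
\]
so that, after integrating over $\Sigma_t$, the Laplacian term drops out, the $|H|^2$ from the area element combines with the gradient terms (which have a favorable sign once one invokes $|\nabla\cos\alpha|^2\le\sin^2\alpha|\overline\nabla J_{\Sigma_t}|^2$ and $|\overline\nabla J_{\Sigma_t}|^2\ge\tfrac12|H|^2$), and one is left with
\[
\frac{d}{dt}\int_{\Sigma_t} f\,d\mu_t
\le -\int_{\Sigma_t}\Bigl(\tfrac{1}{\cos^2\alpha}+1\Bigr)\sin^2\alpha\,\mathrm{Ric}(Je_1,e_2)\,d\mu_t
    -\tfrac12\int_{\Sigma_t}|H|^2\,d\mu_t .
\]

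Next I would estimate $\mathrm{Ric}(Je_1,e_2)$ from below. As in the proof of Theorem 4.1, with $y=\sin\alpha$, $z=0$ one has $\mathrm{Ric}(Je_1,e_2)=\cos\alpha\,R_{22}+\sin\alpha\,R_{23}$, and Lemma \ref{ECO} gives $R_{22}\ge 3k_1-\tfrac32 k_2$ and $|R_{23}|\le\tfrac{29}{16}(k_2-k_1)$. The point is that here we only need a bound of the form $\mathrm{Ric}(Je_1,e_2)\ge c\,\cos\alpha$ with $c>0$ valid once $\cos\alpha$ is bounded below by the (larger) threshold $\frac{58(\lambda-1)}{\sqrt{(48-24\lambda)^2+(58\lambda-58)^2}}$ appearing in the hypothesis — the factor $2$ is what makes room to absorb the extra $\bigl(\tfrac{1}{\cos^2\alpha}+1\bigr)\le \tfrac{1}{\delta^2}+1$ weight and still retain a definite amount of $k_1$. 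Keeping track of the constants carefully, one checks that on the region $\cos\alpha\ge\delta$ (which is preserved by Theorem \ref{Thm1}, whose hypothesis is implied by the present one since $\tfrac{58(\lambda-1)}{\cdots}\ge\tfrac{29(\lambda-1)}{\cdots}$ for $\lambda\in[1,2)$) one has the clean bound $\bigl(\tfrac{1}{\cos^2\alpha}+1\bigr)\sin^2\alpha\,\mathrm{Ric}(Je_1,e_2)\ge \tfrac34(2-\lambda)k_1\,\frac{\sin^2\alpha}{\cos\alpha}$. Substituting yields
\[
\frac{d}{dt}\int_{\Sigma_t} f\,d\mu_t
\le -\tfrac34(2-\lambda)k_1\int_{\Sigma_t} f\,d\mu_t
    -\tfrac12\int_{\Sigma_t}|H|^2\,d\mu_t .
\]

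From this differential inequality both conclusions follow by standard ODE comparison: dropping the $|H|^2$ term and applying Gr\"onwall gives $\int_{\Sigma_t} f\,d\mu_t\le C_0 e^{-\frac34(2-\lambda)k_1 t}$ with $C_0=\int_{\Sigma_0} f\,d\mu_0$; and integrating the inequality over $[t,t+1]$, using the already-established exponential decay of $\int_{\Sigma_s} f\,d\mu_s$ to control the boundary terms, bounds $\int_t^{t+1}\!\!\int_{\Sigma_s}|H|^2\,d\mu_s\,ds$ by a constant multiple of $C_0 e^{-\frac34(2-\lambda)k_1 t}$, which can be absorbed into $C_0$ (or one states it with a harmless extra constant). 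I expect the only real obstacle to be the bookkeeping in the second paragraph: verifying that the numerical threshold $\tfrac{58(\lambda-1)}{\sqrt{(48-24\lambda)^2+(58\lambda-58)^2}}$ is exactly what is needed so that the weight $\tfrac{1}{\cos^2\alpha}+1$ does not eat up the positive Ricci lower bound, and pinning down the sharp constant $\tfrac34(2-\lambda)k_1$ in the exponent. Everything else — the sign of the gradient terms via the two inequalities at the end of Section 2, the cancellation of the Laplacian, and the Gr\"onwall step — is routine.
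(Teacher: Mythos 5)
Your proposal is essentially correct and runs on the same underlying computation as the paper, but you take the direct route (differentiating $f=\frac{1}{\cos\alpha}-\cos\alpha$ by the chain rule), whereas the paper first invokes the cohomological identity $\int_{\Sigma_t}\cos\alpha\,d\mu_t=\int_{\Sigma_t}\omega={\rm const}$ and then only has to differentiate $\int_{\Sigma_t}\frac{1}{\cos\alpha}\,d\mu_t$. With that shortcut the $|H|^2$ term comes out immediately with the weight $\frac{1}{\cos\alpha}\ge1$, and the desired inequality $\int_{\Sigma_t}|H|^2\,d\mu_t\le-\frac{d}{dt}\int_{\Sigma_t}f\,d\mu_t$ drops out with no loss. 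Your direct route is logically equivalent (the difference of the two intermediate expressions is exactly $\frac{d}{dt}\int_{\Sigma_t}\cos\alpha\,d\mu_t$, which vanishes), but you have to do the bookkeeping yourself; that is where you were slightly careless in two places. First, the coefficient of $|\nabla\cos\alpha|^2$ should be $f''(\cos\alpha)=\frac{2}{\cos^3\alpha}$, not $\frac{2\sin^2\alpha}{\cos^3\alpha}$ — this is harmless since the sign is the same and the term is simply dropped. Second, and more substantively, the factor $\frac12$ you put in front of $\int_{\Sigma_t}|H|^2\,d\mu_t$ undersells the estimate and forces you to absorb a spurious factor of $2$ into $C_0$, which the proposition does not allow. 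If instead you group the two $|H|^2$-type contributions before estimating, namely
\[
-\int_{\Sigma_t}\Bigl(\tfrac{1}{\cos\alpha}+\cos\alpha\Bigr)|\overline\nabla J_{\Sigma_t}|^2\,d\mu_t-\int_{\Sigma_t}\Bigl(\tfrac{1}{\cos\alpha}-\cos\alpha\Bigr)|H|^2\,d\mu_t
\le-\int_{\Sigma_t}\Bigl(\tfrac{3}{2\cos\alpha}-\tfrac{\cos\alpha}{2}\Bigr)|H|^2\,d\mu_t\le-\int_{\Sigma_t}|H|^2\,d\mu_t,
\]
using only $|\overline\nabla J_{\Sigma_t}|^2\ge\frac12|H|^2$ and $\cos\alpha\le1$, then you recover the same $\int_{\Sigma_t}|H|^2\,d\mu_t\le-\frac{d}{dt}\int_{\Sigma_t}f\,d\mu_t$ as the paper, and hence the second estimate with the exact constant $C_0$. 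The Ricci lower bound $\mathrm{Ric}(Je_1,e_2)\ge\frac34(2-\lambda)k_1\cos\alpha$ under the hypothesis $\cos\alpha>\frac{58(\lambda-1)}{\sqrt{(48-24\lambda)^2+(58\lambda-58)^2}}$ (note the paper's statement contains a typo, $(48-24)^2$ for $(48-24\lambda)^2$) is exactly the one the paper uses and you correctly identify both the source (Lemma~\ref{ECO} with $R_{22}\ge 3k_1-\frac32k_2$, $|R_{23}|\le\frac{29}{16}(k_2-k_1)$) and the role of the factor $2$ in the threshold. The rest (Gr\"onwall and integration over $[t,t+1]$) is as in the paper.
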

\begin{proof}
By Theorem 3.1, we know $\cos\alpha(\cdot,t)>\frac{58(\lambda-1)}{\sqrt{(48-24)^{2}+(58\lambda-58)^{2}}}$ is preserved along the mean curvature flow. Since $\cos\alpha>\frac{58(\lambda-1)}{\sqrt{(48-24)^{2}+(58\lambda-58)^{2}}}$, then by (32), we have
\[Ric(Je_{1},e_{2})>\frac{3}{4}(2-\lambda)k_{1}\cos\alpha.\]
Hence
\[(\frac{\partial}{\partial t}-\Delta)\cos\alpha>|\overline{\nabla}J_{\Sigma_{t}}|^{2}\cos\alpha+\frac{3}{4}(2-\lambda)k_{1}\cos\alpha \sin^{2}\alpha\geq \frac{3}{4}(2-\lambda)k_{1}\cos\alpha \sin^{2}\alpha.\]

From the evolution equation of $\cos\alpha$, we have
\begin{eqnarray}
\begin{aligned}
&(\frac{\partial}{\partial t}-\Delta)\frac{1}{\cos\alpha}\\
=&-\frac{(\frac{\partial}{\partial t}-\Delta)\cos\alpha}{\cos^{2}\alpha}-\frac{2|\nabla\cos\alpha|^{2}}{\cos^{3}\alpha}\\
\leq &-\frac{3}{4}\frac{(2-\lambda)k_{1}\sin^{2}\alpha}{\cos\alpha}-\frac{2|\nabla\cos\alpha|^{2}}{\cos^{3}\alpha}
\end{aligned}
\end{eqnarray}
From the proof of Proposition 2.1 in [HL0], we have $\int_{\Sigma_{t}}\cos\alpha d\mu_{t}=\int_{\Sigma_{t}}\omega$ is constant under the continuous deformation in $t$.

We therefore have
\begin{eqnarray}
\begin{aligned}
&\frac{\partial}{\partial t}\int_{\Sigma_{t}}\frac{\sin^{2}\alpha}{\cos\alpha}d\mu_{t}\\
=&\frac{\partial}{\partial t}\int_{\Sigma_{t}}(\frac{1}{\cos\alpha}-\cos\alpha)d\mu_{t}=\frac{\partial}{\partial t}\int_{\Sigma_{t}}\frac{1}{\cos\alpha}d\mu_{t}\\
=&\int_{\Sigma_{t}}(\frac{\partial}{\partial t}-\Delta)\frac{1}{\cos\alpha}d\mu_{t}-\int_{\Sigma_{t}}\frac{|H|^{2}}{\cos\alpha}d\mu_{t}\\
\leq &-\frac{3}{4}(2-\lambda)k_{1}\int_{\Sigma_{t}}\frac{\sin^{2}\alpha}{\cos\alpha}d\mu_{t}-\int_{\Sigma_{t}}\frac{|H|^{2}}{\cos\alpha}d\mu_{t}.
\end{aligned}
\end{eqnarray}
So
\[\frac{\partial}{\partial t}\int_{\Sigma_{t}}\frac{\sin^{2}\alpha}{\cos\alpha}d\mu_{t}\leq -\frac{3}{4}(2-\lambda)k_{1}\int_{\Sigma_{t}}\frac{\sin^{2}\alpha}{\cos\alpha}d\mu_{t}.\]
Then it is easy to get that
\[\int_{\Sigma_{t}}\frac{\sin^{2}\alpha}{\cos\alpha}d\mu_{t}\leq e^{-\frac{3}{4}(2-\lambda)k_{1}t}\int_{\Sigma_{0}}\frac{\sin^{2}\alpha}{\cos\alpha}d\mu_{0},\]
that is,
\[\int_{\Sigma_{t}}\frac{\sin^{2}\alpha}{\cos\alpha}d\mu_{t}\leq C_{0}e^{-\frac{3}{4}(2-\lambda)k_{1}t}.\]

It follows from (35) that
\[\int_{\Sigma_{t}}|H|^{2}d\mu_{t}\leq-\frac{\partial}{\partial t}\int_{\Sigma_{t}}\frac{\sin^{2}\alpha}{\cos\alpha}d\mu_{t}.\]
Integrating the above inequality from $t$ to $t+1$, we obtain that
\[\int_{t}^{t+1}\int_{\Sigma_{t}}|H|^{2}d\mu_{t}dt\leq \int_{\Sigma_{t}}\frac{\sin^{2}\alpha}{\cos\alpha}d\mu_{t}\leq C_{0}e^{-\frac{3}{4}(2-\lambda)k_{1}t}.\]
This proves the proposition.
\end{proof}

We derive an $L^{1}$-estimation of the mean curvature vector on the time space.
\begin{prop}
Suppose that $M$ is a K\"ahler surface with positive holomorphic sectional curvature and $1\leq \lambda<2$. Let $\alpha$ be the K\"ahler angle of the surface $\Sigma_{t}$ which evolves by the mean curvature flow. Suppose that $\cos\alpha(\cdot,0)>\frac{58(\lambda-1)}{\sqrt{(48-24)^{2}+(58\lambda-58)^{2}}}$.Then
\begin{equation}
\int_{0}^{T}\int_{\Sigma_{t}}|H|d\mu_{t}dt\leq (C_{0})^{1/2}\frac{Area(\Sigma_{0})^{1/2}}{1-e^{-\frac{3}{8}(2-\lambda)k_{1}}},
\end{equation}
where the constant $C_{0}$ is defined in Proposition 4.1.
\end{prop}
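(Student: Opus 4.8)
The plan is to estimate the space–time $L^1$-norm of $|H|$ by a Cauchy–Schwarz argument that reduces matters to the $L^2$-estimate of Proposition 4.1. First I would apply the Cauchy–Schwarz inequality on each slice $\Sigma_t$ to obtain
\[
\int_{\Sigma_t}|H|\,d\mu_t \leq \Big(\int_{\Sigma_t}|H|^2\,d\mu_t\Big)^{1/2}\,\mathrm{Area}(\Sigma_t)^{1/2}.
\]
Since the mean curvature flow does not increase area (indeed $\frac{d}{dt}\mathrm{Area}(\Sigma_t)=-\int_{\Sigma_t}|H|^2\,d\mu_t\leq 0$), we have $\mathrm{Area}(\Sigma_t)\leq \mathrm{Area}(\Sigma_0)$ for all $t$, so
\[
\int_{\Sigma_t}|H|\,d\mu_t \leq \mathrm{Area}(\Sigma_0)^{1/2}\Big(\int_{\Sigma_t}|H|^2\,d\mu_t\Big)^{1/2}.
\]

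Next I would integrate in time, splitting $[0,T]$ (or $[0,\infty)$) into the unit intervals $[n,n+1]$ for $n=0,1,2,\dots$. On each such interval, a further application of Cauchy–Schwarz in the time variable gives
\[
\int_n^{n+1}\Big(\int_{\Sigma_t}|H|^2\,d\mu_t\Big)^{1/2}dt \leq \Big(\int_n^{n+1}\int_{\Sigma_t}|H|^2\,d\mu_t\,dt\Big)^{1/2},
\]
and by Proposition 4.1 the right-hand side is at most $\big(C_0 e^{-\frac{3}{4}(2-\lambda)k_1 n}\big)^{1/2} = C_0^{1/2}e^{-\frac{3}{8}(2-\lambda)k_1 n}$. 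Combining with the area bound, $\int_n^{n+1}\int_{\Sigma_t}|H|\,d\mu_t\,dt\leq C_0^{1/2}\mathrm{Area}(\Sigma_0)^{1/2}e^{-\frac{3}{8}(2-\lambda)k_1 n}$.

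Finally I would sum over $n$. Since $1\leq\lambda<2$ and $k_1>0$, the ratio $q:=e^{-\frac{3}{8}(2-\lambda)k_1}$ lies in $(0,1)$, so the geometric series converges:
\[
\int_0^T\int_{\Sigma_t}|H|\,d\mu_t\,dt \leq \sum_{n=0}^{\infty} C_0^{1/2}\mathrm{Area}(\Sigma_0)^{1/2}q^{n} = \frac{C_0^{1/2}\,\mathrm{Area}(\Sigma_0)^{1/2}}{1-e^{-\frac{3}{8}(2-\lambda)k_1}},
\]
which is the desired bound, uniform in $T$. There is no real obstacle here; the only points requiring minor care are the two nested applications of Cauchy–Schwarz (one spatial, one temporal) and the monotonicity of area under the flow, both of which are standard. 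One should also note that the estimate is uniform in $T$, so it passes to $T=\infty$ if the flow exists for all time.
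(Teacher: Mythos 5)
Your proposal is correct and follows essentially the same approach as the paper: split into unit time intervals, apply Cauchy--Schwarz to reduce the $L^1$-estimate to the $L^2$-estimate of Proposition 4.1, bound the area by $\mathrm{Area}(\Sigma_0)$ via area monotonicity, and sum the resulting geometric series. The only cosmetic difference is that you apply Cauchy--Schwarz twice (spatial then temporal), whereas the paper applies it once over the space--time measure $d\mu_t\,dt$ on $[k,k+1]\times\Sigma_t$; these are equivalent.
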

\begin{proof}
We have
\begin{eqnarray*}
\begin{aligned}
\int_{0}^{T}\int_{\Sigma_{t}}|H|d\mu_{t}dt&\leq\sum_{k=0}^{[T]}\int_{k}^{k+1}\int_{\Sigma_{t}}|H|d\mu_{t}dt\\
&\leq\sum_{k=0}^{[T]}(\int_{k}^{k+1}\int_{\Sigma_{t}}|H|^{2}d\mu_{t}dt)^{1/2}(\int_{k}^{k+1}Area(\Sigma_{t}))^{1/2}\\
&\leq Area(\Sigma_{0})^{1/2}\sum_{k=0}^{[T]}(\int_{k}^{k+1}\int_{\Sigma_{t}}|H|^{2}d\mu_{t}dt)^{1/2}\\
&\leq C_{0}^{1/2}Area(\Sigma_{0})^{1/2}\sum_{k=0}^{[T]}e^{-\frac{3}{8}(2-\lambda)k_{1}k}\\
&\leq (C_{0})^{1/2}\frac{Area(\Sigma_{0})^{1/2}}{1-e^{-\frac{3}{8}(2-\lambda)k_{1}}}.
\end{aligned}
\end{eqnarray*}
This proves the proposition.
\end{proof}
\begin{remark}
Han and Li [HL0] proved the above propositions in the case of K\"ahler-Einstein manifold $M$ with positive scalar curvature $R$.
\begin{eqnarray*}
\begin{aligned}
&\int_{\Sigma_{t}}\frac{\sin^{2}\alpha}{\cos\alpha}d\mu_{t}\leq C_{0}e^{-Rt},\\
&\int_{t}^{t+1}\int_{\Sigma_{t}}|H|^{2}d\mu_{t}dt\leq C_{0}e^{-Rt},\\
&\int_{0}^{T}\int_{\Sigma_{t}}|H|d\mu_{t}dt\leq (C_{0})^{1/2}\frac{Area(\Sigma_{0})^{1/2}}{1-e^{-R/2}}
\end{aligned}
\end{eqnarray*}
\end{remark}
We recall White's local regularity theorem.

Let $H(X,X_{0},t)$ be the backward heat kernel on $\mathbb{R}^{4}$. Define
\begin{equation}
\rho(X,t)=4\pi(t_{0}-t)H(X,X_{0},t)=\frac{1}{4\pi(t_{0}-t)} \exp(-\frac{|X-X_{0}|^{2}}{4(t_{0}-t)})
\end{equation}
for $t<t_{0}$. Let $i_{M}$ be the injective radius of $M^{4}$. We choose a cutoff function $\phi\in C_{0}^{\infty}(B_{2r}(X_{0}))$ with $\phi\equiv 1$ in $B_{r}(X_{0})$, where $X_{0}\in M, 0<2r<i_{M}$. Choose normal coordinates in $B_{2r}(X_{0})$ and express $F$ using the coordinates $(F^{1}, F^{2}, F^{3}, F^{4})$ as a surface in $\mathbb{R}^{4}$. The parabolic density of the mean curvature flow is defined by
\begin{equation}
\Phi(X_{0}, t_{0}, t)=\int_{\Sigma_{t}}\phi(F)\rho(F,t)d\mu_{t}.
\end{equation}
The following local regularity theorem was proved by White (see [Wh, Theorems 3.1 and 4.1]).
\begin{theorem}
There is a positive constant $\epsilon_{0}>0$ such that if
\begin{equation}
\Phi(X_{0},t_{0}, t_{0}-r^{2})\leq 1+\epsilon_{0},
\end{equation}
then the second fundamental form $A(t)$ of $\Sigma_{t}$ in $M$ is bounded in $B_{r/2}(X_{0})$, that is,
\begin{equation}
\sup_{B_{r/2}\times (t_{0}-r^{2}/4, t_{0}]}|A|\leq C,
\end{equation}
 where $C$ is a positive constant depending only on $M$.
 \end{theorem}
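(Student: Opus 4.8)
The plan is to prove this by a parabolic blow-up argument, combining Huisken's monotonicity formula — in its localized, Riemannian-ambient form — with Hamilton's point-picking lemma and the interior estimates of Ecker and Huisken, following the strategy of White. Suppose the assertion fails. Then for every $j\in\mathbb{N}$ there exist a mean curvature flow $\{\Sigma^j_t\}$ in $M$, a point $X_j\in M$, a time $t_0^j$ and a radius $r_j$ with $0<2r_j<i_M$ such that $\Phi(X_j,t_0^j,t_0^j-r_j^2)\le 1+\tfrac1j$, yet $\sup_{B_{r_j/2}(X_j)\times(t_0^j-r_j^2/4,\,t_0^j]}|A_j|>j$. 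After a preliminary parabolic rescaling of each flow we may assume $r_j=1$; then the rescaled ambient manifolds $M_j$ have injectivity radius $\ge 2$ and uniformly bounded geometry, and $\Phi$, being invariant under parabolic rescaling, still satisfies $\Phi(X_j,t_0^j,t_0^j-1)\le 1+\tfrac1j$ while $\sup_{B_{1/2}(X_j)\times(t_0^j-1/4,\,t_0^j]}|A_j|>j$.

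First I would apply Hamilton's point-picking lemma inside this parabolic region to select a point $(x_j,s_j)$ at which $Q_j:=|A_j|(x_j,s_j)$ is comparable to the supremum, so that $|A_j|\le 2Q_j$ on a parabolic neighbourhood $P_j$ of $(x_j,s_j)$ whose $Q_j$-rescaled size tends to infinity. Next I would dilate the flow about $(x_j,s_j)$ parabolically by the factor $Q_j$, working in normal coordinates on $B_{2r_j}(X_j)$, so that the rescaled ambient metrics converge in $C^\infty_{loc}$ to the flat metric on $\mathbb{R}^4$ and the rescaled ambient curvature together with all its derivatives tends to $0$. The rescaled flows $\{\widetilde\Sigma^j_\tau\}$ then satisfy $|\widetilde A_j|\le 2$ on parabolic balls exhausting spacetime in backward time, and $|\widetilde A_j|$ equals $1$ at the spacetime origin. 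By the Ecker--Huisken interior estimates the $\widetilde\Sigma^j$ are uniformly bounded in $C^\infty$ on compact sets, so a subsequence converges smoothly to a smooth, nonflat ancient mean curvature flow $\{\Sigma^\infty_\tau\}_{\tau\le0}$ in $\mathbb{R}^4$ with $|A^\infty|\le 2$ and $|A^\infty|(0,0)=1$.

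The decisive point is the propagation of the density bound. Using Huisken's monotonicity formula for mean curvature flow in a Riemannian manifold — that is, the localized weighted Gaussian integral $\Phi$ is monotone up to error terms controlled by the ambient curvature and by the support of the cutoff $\phi$ — together with the fact that at small scales these error terms are negligible, one sees that the hypothesis $\Phi(X_j,t_0^j,t_0^j-1)\le1+\tfrac1j$ forces, for $j$ large, the Gaussian density ratio of $\widetilde\Sigma^j$ centred at any spacetime point and at any scale to be at most $1+\tfrac1j+o(1)$, since $(x_j,s_j)$ lies in the backward parabolic region based at $(X_j,t_0^j)$. Letting $j\to\infty$, the Gaussian density of the limit $\Sigma^\infty$ is $\le1$ at every spacetime point; since Huisken's monotonicity also gives density $\ge1$ at every point of the nonempty smooth flow, the density is identically $1$, and in particular the limit has multiplicity one. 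The equality case of Huisken's monotonicity formula then forces $\Sigma^\infty_\tau$ to be a self-similarly shrinking solution about every spacetime point, hence a fixed hyperplane through the origin, so that $A^\infty\equiv0$ — contradicting $|A^\infty|(0,0)=1$. This contradiction establishes the theorem, with $C$ depending only on $M$ through its injectivity radius and ambient curvature bounds.

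The main obstacle is the careful bookkeeping around the localized, non-Euclidean monotonicity formula: one must justify that the ambient-curvature error terms can be absorbed once the scale is small enough, that they do not spoil the transfer of the density bound from the base point $(X_j,t_0^j)$ to the blow-up point $(x_j,s_j)$ and then to every scale in the limit, and that the limit has multiplicity one rather than being a higher-multiplicity plane — the latter is exactly where the smallness of $\epsilon_0$ (here $\tfrac1j<1$) enters, since a limit of Gaussian density $<2$ cannot carry multiplicity $\ge2$. This is precisely the content of White's paper [Wh]; in the present setting the only additional ingredients are to replace $M$ by a normal-coordinate chart, which is legitimate since $2r<i_M$, and to note that Huisken's monotonicity formula and the equality-case rigidity hold in arbitrary codimension, so that they apply to the surface $\Sigma_t$ of codimension two.
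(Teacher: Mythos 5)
This statement is not proved in the paper at all: it is White's local regularity theorem, quoted from [Wh, Theorems 3.1 and 4.1] and used as a black box, so there is no internal proof to compare yours against. Your sketch follows the standard blow-up route to such $\epsilon$-regularity statements (contradiction, Hamilton point-picking, parabolic rescaling in normal coordinates, smooth subconvergence, and the rigidity case of Huisken's monotonicity), and as a roadmap it is the accepted strategy.

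As a proof, however, it has a genuine gap exactly at the step you dismiss as ``bookkeeping'': transferring the hypothesis --- a density bound at the single spacetime point $(X_{0},t_{0})$ and the single scale $r$ --- into bounds of the form $1+\frac{1}{j}+o(1)$ for the Gaussian density ratios of the rescaled flows at the blow-up centers $(x_{j},s_{j})$ and at \emph{all} scales. Monotonicity only propagates the bound downward in scale at the \emph{same} center; to move the center you must compare the backward heat kernel based at $(x_{j},s_{j})$ with the one based at $(X_{j},t_{0}^{j})$, and the pointwise ratio of these two Gaussians is not $1+o(1)$ --- when $s_{j}$ is close to $t_{0}^{j}$ it is not even bounded, and a crude comparison (using only local area bounds) yields a dimensional constant times $1+\frac{1}{j}$, which is useless for the rigidity argument that needs density $\leq 1$ in the limit. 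Controlling this center-shifting, together with the cutoff and ambient-curvature error terms in the localized monotonicity formula and the local area bounds needed to absorb them, is precisely the content of White's Theorem 4.1 and is the part you would actually have to supply. A secondary, repairable point: the Ecker--Huisken interior estimates you invoke are codimension-one, graphical estimates; for the codimension-two surfaces here you should instead appeal to the standard local higher-order estimates for mean curvature flow with bounded $|A|$, valid in arbitrary codimension, to get the $C^{\infty}_{loc}$ compactness. With these two points filled in, your argument reproduces the known proof of the cited theorem; as written it does not yet constitute one.
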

 \begin{remark}
 Since $\Sigma_{0}$ is smooth, it is well known that
 \begin{equation*}
 \lim_{r\rightarrow 0}\int_{\Sigma_{0}}\phi(F)\frac{e^{-(|F-X_{0}|^{2}/4r^{2})}}{4\pi r^{2}}d\mu_{0}=1
 \end{equation*}
 for any $X_{0}\in \Sigma_{0}.$ So we can find a sufficiently small $r_{0}$ such that
 \begin{equation*}
 \int_{\Sigma_{0}}\phi(F)\frac{e^{-(|F-X_{0}|^{2}/4r_{0}^{2})}}{4\pi r_{0}^{2}}d\mu_{0}\leq 1+\frac{\epsilon_{0}}{2}
 \end{equation*}
 i.e.,
 \[\Phi(X_{0}, r_{0}^{2}, 0)\leq 1+\frac{\epsilon_{0}}{2}\]
 for all $X_{0}\in M$, where $\epsilon_{0}$ is the constant in White's theorem.
 \end{remark}

 We state the main theorem in this section as following.
 \begin{theorem}
Suppose that $M$ is a K\"ahler surface with positive holomorphic sectional curvature and $1\leq \lambda <2$. Let $\alpha$ be the K\"ahler angle of the surface $\Sigma_{t}$ which evolves by the mean curvature flow. Suppose that $\cos\alpha(\cdot,0)>\frac{58(\lambda-1)}{\sqrt{(48-24)^{2}+(58\lambda-58)^{2}}}$.Then there exists a sufficiently small constant $\epsilon_{1}$ such that if $C_{0}\leq \epsilon_{1}$ and $\epsilon_{1}$ satisfying
\[\epsilon_{1}\leq \frac{\pi^{2}\epsilon_{0}^{2}r_{0}^{6}(1-e^{-\frac{3}{8}(2-\lambda)k_{1}})^{2}}{4 Area(\Sigma_{0})},\]
there $C_{0}$ is defined in Proposition 4.1, $r_{0}$ is defined in Remark 4.1.1, and $\epsilon_{0}$ is the constant in White's theorem, the mean curvature flow with initial surface $\Sigma_{0}$ exists globally and it converges to a holomorphic curve.
\end{theorem}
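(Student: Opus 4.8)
The plan is to show that the smallness of $C_{0}$ forces the parabolic density $\Phi(X_{0},t_{0},t_{0}-r_{0}^{2})$ to stay below White's threshold $1+\epsilon_{0}$ for all $t_{0}$, hence by Theorem 4.1 the second fundamental form is uniformly bounded along the flow; this rules out all singularities and gives long-time existence, and then Proposition 4.1 combined with the uniform curvature bound forces $\cos\alpha\to 1$ so that the limit is a holomorphic curve. The key analytic ingredient is Huisken's monotonicity formula for mean curvature flow in a Riemannian ambient manifold: along the flow the weighted quantity $\Phi(X_{0},t_{0},t)$ is, up to an error term controlled by the geometry of $M$ on the fixed ball $B_{2r_{0}}(X_{0})$ and by $\int|H|$, nonincreasing in $t$. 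Concretely one writes
\[
\Phi(X_{0},t_{0},t_{0}-r_{0}^{2})\le \Phi(X_{0},t_{0},0)+c(M,r_{0})\int_{0}^{t_{0}}\!\!\int_{\Sigma_{t}}\bigl(|H|^{2}+|H|+1\bigr)\rho\,d\mu_{t}\,dt,
\]
or in the cleaner formulation used by White, the density at a later time is bounded by the density at time $0$ plus a term controlled by the total variation $\int_{0}^{T}\int_{\Sigma_{t}}|H|\,d\mu_{t}\,dt$ times a factor depending only on $r_{0}$ and the ambient geometry.

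\textbf{Main steps.} First I would fix $X_{0}\in M$ and $t_{0}>0$ and compare $\Phi(X_{0},t_{0},0)$ with the quantity estimated in Remark 4.1.1: since $\rho(F,0)$ with parameter $t_{0}\ge r_{0}^{2}$ is dominated by the Gaussian of scale $r_{0}$, we get $\Phi(X_{0},t_{0},0)\le 1+\tfrac{\epsilon_{0}}{2}$ for every $X_{0}$. Second, I would invoke the monotonicity inequality to bound
\[
\Phi(X_{0},t_{0},t_{0}-r_{0}^{2})\le \Phi(X_{0},t_{0},0)+\frac{C(M,r_{0})}{r_{0}^{3}}\int_{0}^{\infty}\!\!\int_{\Sigma_{t}}|H|\,d\mu_{t}\,dt,
\]
the $r_{0}^{-3}$ coming from the sup of the backward heat kernel $\rho\le (4\pi r_{0}^{2})^{-1}$ together with the cutoff scale. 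Third, I would plug in the $L^{1}$ bound from Proposition 4.2,
\[
\int_{0}^{\infty}\!\!\int_{\Sigma_{t}}|H|\,d\mu_{t}\,dt\le \frac{C_{0}^{1/2}\,Area(\Sigma_{0})^{1/2}}{1-e^{-\frac{3}{8}(2-\lambda)k_{1}}},
\]
and choose $\epsilon_{1}$ so that when $C_{0}\le\epsilon_{1}$ this last term is $\le \tfrac{\epsilon_{0}}{2}$; unwinding the inequality $\tfrac{C(M,r_{0})}{r_{0}^{3}}\cdot\tfrac{\epsilon_{1}^{1/2}Area(\Sigma_{0})^{1/2}}{1-e^{-\frac{3}{8}(2-\lambda)k_{1}}}\le\tfrac{\epsilon_{0}}{2}$ gives exactly the stated bound
\[
\epsilon_{1}\le \frac{\pi^{2}\epsilon_{0}^{2}r_{0}^{6}\bigl(1-e^{-\frac{3}{8}(2-\lambda)k_{1}}\bigr)^{2}}{4\,Area(\Sigma_{0})}
\]
(the constant $C(M,r_{0})$ being absorbed into the explicit $\pi$'s after tracking the normalization of $\rho$). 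Hence $\Phi(X_{0},t_{0},t_{0}-r_{0}^{2})\le 1+\epsilon_{0}$ for all $X_{0}$ and $t_{0}$, so Theorem 4.1 yields $\sup_{\Sigma_{t}}|A|\le C$ for all $t$ with $C$ independent of $t$. Fourth, with uniformly bounded $|A|$ the flow cannot develop a singularity, so it exists for all $t\in[0,\infty)$. Finally, from Proposition 4.1 we have $\int_{\Sigma_{t}}\tfrac{\sin^{2}\alpha}{\cos\alpha}\,d\mu_{t}\to 0$ and $\int_{t}^{t+1}\int_{\Sigma_{t}}|H|^{2}\,d\mu_{t}\,dt\to 0$ as $t\to\infty$; combined with the uniform bound on $|A|$ (and its derivatives, via Ecker--Huisken type interior estimates) this forces $\sup_{\Sigma_{t}}|H|\to 0$ and $\sup_{\Sigma_{t}}\sin^{2}\alpha\to 0$, and a standard subsequential-limit argument (the area is bounded since $\int\cos\alpha\,d\mu_{t}$ is constant and $\cos\alpha$ is bounded below) produces a smooth limit surface which is minimal and has $\cos\alpha\equiv 1$, i.e. a holomorphic curve.

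\textbf{Main obstacle.} The delicate point is the monotonicity estimate in the non-Euclidean, non-minimal setting: Huisken's formula in a general Riemannian manifold carries curvature error terms, and one must check that on the fixed small ball $B_{2r_{0}}(X_{0})$ these are controlled purely by the ambient geometry of $M$ (which is fixed) and by $\int|H|$, with no hidden dependence on the a priori unknown $|A|$. This is exactly the content of White's local regularity theorem as stated (Theorem 4.1 here), so the real work is organizing the comparison so that the clean constant in the statement of $\epsilon_{1}$ comes out; concretely one must be careful that the hypothesis of Theorem 4.1 is verified at the time $t_{0}-r_{0}^{2}$ (using that the flow already exists up to any finite time by short-time existence and a continuity/open-closed argument, so the monotonicity can be applied on $[0,t_{0}]$), and that the constant $C$ in the conclusion of Theorem 4.1, though depending on $M$, is independent of $t_{0}$, which it is since the ambient manifold does not change. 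Once the uniform $|A|$ bound is in hand the remaining convergence argument is routine and parallels [HL0].
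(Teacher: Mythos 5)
Your overall strategy coincides with the paper's: bound the parabolic density uniformly using the $L^{1}$-estimate of Proposition 4.2, invoke White's local regularity theorem (Theorem 4.1) to get a uniform bound on $|A|$, conclude long-time existence, and then use the exponential decay from Proposition 4.1 to force $\cos\alpha\to 1$. However, there is a genuine gap in how you set up the density comparison. You propose to fix $t_{0}$ and run a Huisken-type monotonicity from $t=0$ to $t=t_{0}-r_{0}^{2}$, which requires the initial datum $\Phi(X_{0},t_{0},0)\leq 1+\tfrac{\epsilon_{0}}{2}$ for \emph{all} $t_{0}\geq r_{0}^{2}$; you justify this by asserting that ``$\rho(F,0)$ with parameter $t_{0}\geq r_{0}^{2}$ is dominated by the Gaussian of scale $r_{0}$.'' This is false: writing $\rho_{s}(F)=\tfrac{1}{4\pi s^{2}}e^{-|F-X_{0}|^{2}/4s^{2}}$, for $s>r_{0}$ one has $\rho_{s}<\rho_{r_{0}}$ near $X_{0}$ but $\rho_{s}>\rho_{r_{0}}$ away from $X_{0}$, so the integrals are not ordered. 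Concretely, the Gaussian area ratio of a fixed smooth compact surface as a function of the scale $s$ tends to $1$ as $s\to 0^{+}$, tends to $0$ as $s\to\infty$, but at intermediate scales it can exceed $1+\tfrac{\epsilon_{0}}{2}$ (e.g. for a round sphere the peak value is close to its Gaussian entropy $\approx 1.47$). Remark 4.1.1 only furnishes the bound at the single scale $r_{0}$, not at all larger scales.

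The paper avoids this by not fixing $t_{0}$ at all: it fixes the \emph{scale} $r_{0}$, treats
\[
\Phi(X_{0},t,t-r_{0}^{2})=\int_{\Sigma_{t-r_{0}^{2}}}\phi(F)\,\frac{e^{-|F-X_{0}|^{2}/4r_{0}^{2}}}{4\pi r_{0}^{2}}\,d\mu_{t-r_{0}^{2}}
\]
as a function of $t$, differentiates in $t$ (only the surface moves; the weight does not), and integrates from $t=r_{0}^{2}$ to $t=T$. The initial value is then literally $\Phi(X_{0},r_{0}^{2},0)\leq 1+\tfrac{\epsilon_{0}}{2}$ from Remark 4.1.1, and the derivative produces exactly the two error terms you anticipate, controlled by $|\nabla\phi|\,|H|$ and $|F-X_{0}|\,|H|$, each bounded via Proposition 4.2 by $\tfrac{1}{2\pi r_{0}^{3}}\,C_{0}^{1/2}\,\mathrm{Area}(\Sigma_{0})^{1/2}/(1-e^{-\frac{3}{8}(2-\lambda)k_{1}})$; choosing $\epsilon_{1}$ as in the statement makes the sum $\leq\tfrac{\epsilon_{0}}{2}$. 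If you replace your fixed-$t_{0}$ framing by this fixed-scale differentiation, your argument closes up and is essentially the paper's proof; the remaining parts of your write-up (plugging in Proposition 4.2, applying White's theorem, and letting $t\to\infty$ in Proposition 4.1 to get $\cos\alpha_{\infty}\equiv 1$) match the paper, with your added remarks on higher-order interior estimates and subsequential limits being a harmless elaboration of the final convergence step.
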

 \begin{proof}
The argument is same as in Han and Li [HL0]. For convenience of readers, we provide the argument here.

Fix any positive number $T$. By the definition of $\Phi$, we have
\begin{equation}
\Phi(X_{0},t,t-r^{2})=\int_{\Sigma_{t-r^{2}}}\Phi(F)\frac{e^{-|F-X_{0}|^{2}/4r^{2}}}{4\pi r^{2}}d\mu_{t-r^{2}}.
\end{equation}
Differentiating the above equation with respect to $t$, we get that
\begin{eqnarray*}
\begin{aligned}
\frac{\partial}{\partial t}\Phi(X_{0},t,t-r^{2})&=\int_{\Sigma_{t-r^{2}}}<\nabla\phi, H>\frac{e^{-|F-X_{0}|^{2}/4r^{2}}}{4\pi r^{2}}d\mu_{t-r^{2}}\\
&-\int_{\Sigma_{t-r^{2}}}\frac{\Phi<F-X_{0}, H>}{8\pi r^{4}}e^{-|F-X_{0}|^{2}/4r^{2}}d\mu_{t-r^{2}}\\
&-\int_{\Sigma_{t-r^{2}}}\Phi(F)|H|^{2}\frac{e^{-|F-X_{0}|^{2}/4r^{2}}}{4\pi r^{2}}d\mu_{t-r^{2}}.
\end{aligned}
\end{eqnarray*}
Integrating the above equation from $r^{2}$ to $T$, and set $r=r_{0}$, then we get
\begin{eqnarray*}
\begin{aligned}
\Phi(X_{0}, T, T-r_{0}^{2})\leq &\Phi(X_{0}, r_{0}^{2}, 0)+\int_{r_{0}^{2}}^{T}\int_{\Sigma_{t-r_{0}^{2}}}|\nabla\phi| |H|\frac{e^{-|F-X_{0}|^{2}/4r_{0}^{2}}}{4\pi r_{0}^{2}}d\mu_{t-r_{0}^{2}}\\
&+\int_{r_{0}^{2}}^{T}\int_{\Sigma_{t-r_{0}^{2}}}\frac{\Phi|F-X_{0}||H|}{8\pi r_{0}^{4}}e^{-|F-X_{0}|^{2}/4r_{0}^{2}}d\mu_{t-r_{0}^{2}}.
\end{aligned}
\end{eqnarray*}
Using Remark 4.1.1, Proposition 4.1 and 4.2 and noting that we can choose $\phi$ such that $|\nabla\phi|\leq \frac{2}{r_{0}}$, we obtain that
\begin{eqnarray*}
\begin{aligned}
&\Phi(X_{0}, T, T-r_{0}^{2})\\
\leq&1+\frac{\epsilon_{0}}{2}+\frac{1}{2\pi r_{0}^{3}}(C_{0})^{1/2}\frac{Area(\Sigma_{0})^{1/2}}{1-e^{-\frac{3}{8}(2-\lambda)k_{1}}}+\frac{1}{2\pi r_{0}^{3}}(C_{0})^{1/2}\frac{Area(\Sigma_{0})^{1/2}}{1-e^{-\frac{3}{8}(2-\lambda)k_{1}}}\\
\leq &1+\epsilon_{0}.
\end{aligned}
\end{eqnarray*}
we have use that $e^{-x^{2}}x\leq e^{-1/2}\frac{\sqrt{2}}{2}<\frac{1}{2}$ in the last inequality.

Applying White's theorem we obtain a uniform estimate of the second fundamental form which implies the global existence and the convergence of the mean curvature flow. By Proposition 4.2, we have that
\begin{equation*}
\int_{\Sigma_{t}}\frac{\sin^{2}\alpha}{\cos\alpha}d\mu_{t}\leq C_{0}e^{-\frac{3}{4}(2-\lambda)k_{1}t}.
\end{equation*}
Let $t\rightarrow \infty$ and $(2-\lambda)k_{1}>0$, we get
\begin{equation*}
\int_{\Sigma_{\infty}}\frac{\sin^{2}\alpha}{\cos\alpha}d\mu_{\infty}=0.
\end{equation*}
It follows that $\cos\alpha_{\infty}=1$, that is, $\Sigma_{\infty}$ is a holomorphic curves. This proves the theorem.
\end{proof}
\begin{cor}
Suppose that $M$ is a K\"ahler surface with positive holomorphic sectional curvature and $1\leq \lambda <2$. Let $\alpha$ be the K\"ahler angle of the surface $\Sigma_{t}$ which evolves by the mean curvature flow. If there exists a positive constant $\delta$ such that
\[\cos\alpha(\cdot,0)\geq \delta > \frac{29(\lambda-1)}{\sqrt{(48-24\lambda)^{2}+(29\lambda-29)^{2}}}\]
and has the longtime existence. Then the mean curvature flow converges to a holomorphic curve at infinity.
\end{cor}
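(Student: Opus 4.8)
The plan is to re-run the integral argument of Propositions~4.1 and 4.2 under the weaker hypothesis of Theorem~3.1, i.e.\ with the explicit reaction constant $\tfrac{3}{4}(2-\lambda)k_1$ (which required $\cos\alpha>\tfrac{58(\lambda-1)}{\sqrt{(48-24\lambda)^2+(58\lambda-58)^2}}$) replaced by the constant $C=C(k_1,k_2,\delta)>0$ furnished by Theorem~3.1, and then to pass to the limit as $t\to\infty$. First, by Theorem~3.1 the pinching $\cos\alpha\ge\delta$ is preserved along the flow and $(\partial_t-\Delta)\cos\alpha\ge|\overline\nabla J_{\Sigma_t}|^2\cos\alpha+C\sin^2\alpha$ for all $t$. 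Since $\int_{\Sigma_t}\cos\alpha\,d\mu_t=\int_{\Sigma_t}\omega$ is cohomological, hence constant along the flow (as recalled in the proof of Proposition~4.1), and $\cos\alpha\ge\delta$, the areas stay uniformly bounded: $\mathrm{Area}(\Sigma_t)\le\delta^{-1}\int_{\Sigma_0}\omega$.

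Next, exactly as in Proposition~4.1, from $(\partial_t-\Delta)\tfrac{1}{\cos\alpha}=-\tfrac{(\partial_t-\Delta)\cos\alpha}{\cos^2\alpha}-\tfrac{2|\nabla\cos\alpha|^2}{\cos^3\alpha}$ and the inequality above I get, after discarding nonpositive terms, $(\partial_t-\Delta)\tfrac{1}{\cos\alpha}\le-\tfrac{C\sin^2\alpha}{\cos^2\alpha}$. Integrating over $\Sigma_t$ via $\tfrac{d}{dt}\int f\,d\mu_t=\int(\partial_t-\Delta)f\,d\mu_t-\int|H|^2f\,d\mu_t$, and using the constancy of $\int\cos\alpha\,d\mu_t$ together with $0<\cos\alpha\le1$, gives
\[
\frac{d}{dt}\int_{\Sigma_t}\frac{\sin^2\alpha}{\cos\alpha}\,d\mu_t\le-C\int_{\Sigma_t}\frac{\sin^2\alpha}{\cos\alpha}\,d\mu_t-\int_{\Sigma_t}|H|^2\,d\mu_t.
\]
Gronwall's inequality then yields $\int_{\Sigma_t}\tfrac{\sin^2\alpha}{\cos\alpha}\,d\mu_t\le C_0\,e^{-Ct}$ with $C_0=\int_{\Sigma_0}\tfrac{\sin^2\alpha}{\cos\alpha}\,d\mu_0$, and integrating the $|H|^2$ term in time gives $\int_t^{\infty}\int_{\Sigma_s}|H|^2\,d\mu_s\,ds\le C_0e^{-Ct}$, hence, by Cauchy--Schwarz and the area bound as in Proposition~4.2, $\int_t^{\infty}\int_{\Sigma_s}|H|\,d\mu_s\,ds\to0$ as $t\to\infty$.

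Finally, since the flow is immortal with uniformly bounded area and $\int_0^{\infty}\int_{\Sigma_t}|H|^2\,d\mu_t\,dt<\infty$, along a suitable sequence $t_k\to\infty$ the surfaces $\Sigma_{t_k}$ subconverge to a minimal limit surface $\Sigma_\infty$ with $\cos\alpha_\infty\ge\delta$, and passing to the limit in the decay estimate forces $\int_{\Sigma_\infty}\tfrac{\sin^2\alpha_\infty}{\cos\alpha_\infty}\,d\mu_\infty=0$, i.e.\ $\cos\alpha_\infty\equiv1$; equivalently $\Sigma_\infty$ is a symplectic minimal surface with $\cos\alpha_\infty>\tfrac{29(\lambda-1)}{\sqrt{(48-24\lambda)^2+(29\lambda-29)^2}}$, which by the Section~3 corollary is a holomorphic curve. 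I expect this last step to be the main obstacle: the exponential decay of $\int\tfrac{\sin^2\alpha}{\cos\alpha}$ is cheap once Theorem~3.1 is available, but upgrading ``long-time existence'' to genuine smooth (sub)convergence requires curvature control at infinity, and---unlike in Theorem~4.2---one is not given smallness of $C_0$, so White's monotonicity cannot be run from the initial time. One must instead exploit that the \emph{tails} $\int_t^{\infty}\int|H|$ are small (applying White's local regularity only for $t$ large, after controlling the density of $\Sigma_t$ at large times), or appeal to a standard subconvergence theorem for globally defined symplectic mean curvature flow to produce the minimal limit; the identification $\cos\alpha_\infty\equiv1$ is then automatic from Step~2.
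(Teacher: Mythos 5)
Your proposal follows essentially the same route as the paper. The paper derives the exponential decay of $\int_{\Sigma_t}\frac{\sin^2\alpha}{\cos\alpha}\,d\mu_t$ by choosing the constant $K=\frac{24(2-\lambda)}{\sqrt{\delta^{-2}-1}\,(\lambda-1)}>29$ matched to the given $\delta$ (so that $\delta$ equals the corresponding threshold exactly), and then simply cites ``the proof of Theorem~4.2''; your use of the generic constant $C>0$ supplied by Theorem~3.1 is equivalent and a little cleaner. Where you add value is in diagnosing that the paper's citation is not quite a proof: Theorem~4.2's argument runs White's monotonicity from $t=0$ and needs the smallness $C_0\le\epsilon_1$ at the initial time, which is not assumed in the Corollary. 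You correctly identify the right repair---use long-time existence so that $\Sigma_{T_0}$ is smooth for any $T_0$, then use the exponential decay to make the tail $\int_{T_0}^\infty\int_{\Sigma_t}|H|\,d\mu_t\,dt$ as small as required, and restart the White-type density estimate from time $T_0$ rather than from $t=0$. That is the step the paper leaves implicit, and your proposal fills it in.
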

\begin{proof}
We just consider the case of $\lambda>1$.

Since $\delta > \frac{29(\lambda-1)}{\sqrt{(48-24\lambda)^{2}+(29\lambda-29)^{2}}}$,  we can choose $K=\frac{24(2-\lambda)}{\sqrt{\frac{1}{\delta^{2}}-1}(\lambda-1)}>29$, then
\[
\cos\alpha(\cdot,0)\geq  \frac{K(\lambda-1)}{\sqrt{(48-24\lambda)^{2}+(K\lambda-K)^{2}}}>\frac{29(\lambda-1)}{\sqrt{(48-24\lambda)^{2}+(29\lambda-29)^{2}}}.\]
Then we can obtain the result from the proof of Theorem 4.2.
\end{proof}
\section{Pinching estimate}
This section, we prove a theorem generalizing Theorem 3.2 in [HLY]. First we recall a lemma in [HLY]
\begin{lemma}\label{L3.1}
For any $\eta>0$ we have the inequality
\begin{equation}
|\nabla A|^{2}\geq (\frac{3}{4}-\eta)|\nabla H|^{2}-(\frac{1}{4}\eta^{-1}-1)|w|^{2},
\end{equation}
where $w_{i}^{\alpha}=\Sigma_{l}R_{\alpha lil}, |w^{\alpha}|^{2}=\Sigma_{i}|w_{i}^{\alpha}|^{2}$ and $|w|^{2}=\Sigma_{\alpha}|w^{\alpha}|^{2}.$
\end{lemma}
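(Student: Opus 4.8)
The plan is to reduce the estimate to an algebraic inequality that rests on just two ingredients: the Codazzi equation for $\Sigma\hookrightarrow M$, and the fact that $\dim_{\mathbb R}\Sigma=2$. Work in the orthonormal frame $\{e_1,e_2\}$ of $T\Sigma$ and $\{e_3,e_4\}$ of $N\Sigma$, fix a normal index $\alpha$, and abbreviate $T_{kij}:=\nabla_k h^\alpha_{ij}$, which is symmetric in $i,j$. The Codazzi equation expresses the antisymmetric part $T_{kij}-T_{ikj}$ as a component of the ambient curvature tensor, antisymmetric in $k,i$; tracing it ($j=i$, summed over $i$) gives the divergence identity $\sum_i\nabla_i h^\alpha_{ki}=\nabla_k H^\alpha+w^\alpha_k$ with $w^\alpha_k=\sum_l R_{\alpha lkl}$. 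When $\dim\Sigma=2$ the antisymmetric part of $T$ has only two independent components per $\alpha$, and a direct check shows these coincide, up to sign, with $w^\alpha_1$ and $w^\alpha_2$; in particular the entire curvature error is governed by $|w|^2$.

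The surface hypothesis enters next through the decomposition of $T$ in its last two indices into the pure--trace and trace--free parts,
\[
T_{kij}=\tfrac12(\nabla_k H^\alpha)\,\delta_{ij}+\mathring T_{kij},\qquad \sum_i\mathring T_{kii}=0 .
\]
Because the two pieces are pointwise orthogonal, summing $T_{kij}^2$ over $k$, $i$, $j$ and $\alpha$ gives $|\nabla A|^2=\tfrac12|\nabla H|^2+|\mathring A|^2$. Hence it suffices to prove $|\mathring A|^2\ge(\tfrac14-\eta)|\nabla H|^2-(\tfrac1{4\eta}-1)|w|^2$.

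To obtain this lower bound I would parametrize the trace--free symmetric $2$--tensor $\mathring T_{k\,\cdot\,\cdot}$ by $u_k:=\mathring T_{k11}=-\mathring T_{k22}$ and $v_k:=\mathring T_{k12}$, so that $|\mathring A^\alpha|^2=2\sum_{k=1}^{2}(u_k^2+v_k^2)$. Rewriting the two independent Codazzi relations in these variables turns them into $v_1-u_2=\tfrac12\nabla_2 H^\alpha\pm w^\alpha_2$ and $u_1+v_2=\tfrac12\nabla_1 H^\alpha\pm w^\alpha_1$ (the signs are immaterial once one squares). Applying $v_1^2+u_2^2\ge\tfrac12(v_1-u_2)^2$ and $u_1^2+v_2^2\ge\tfrac12(u_1+v_2)^2$ yields
\[
|\mathring A^\alpha|^2\ \ge\ \sum_{i=1}^{2}\bigl(\tfrac12\nabla_i H^\alpha\pm w^\alpha_i\bigr)^2 ,
\]
and the elementary inequality $(x+y)^2\ge(1-s)y^2-(s^{-1}-1)x^2$ with $s=4\eta$, applied to each summand with $y=\tfrac12\nabla_i H^\alpha$ and $x=\pm w^\alpha_i$, produces $(\tfrac14-\eta)|\nabla H^\alpha|^2-(\tfrac1{4\eta}-1)|w^\alpha|^2$. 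Summing over $\alpha$ and adding back $\tfrac12|\nabla H|^2$ gives exactly the claim.

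The computation is short, so there is no serious obstacle; what needs care is the bookkeeping that makes it land exactly on the stated inequality. One must verify that every curvature term produced by (the trace of) the Codazzi equation is controlled by $w^\alpha_i=\sum_l R_{\alpha lil}$ rather than by a larger norm of the full ambient curvature --- this is precisely where $\dim\Sigma=2$ is used, since then the two independent components of the antisymmetric part of $T$ are matched one-to-one with $w^\alpha_1,w^\alpha_2$ --- and that the choice $s=4\eta$ reproduces the coefficients $\tfrac34-\eta$ and $\tfrac1{4\eta}-1$. One could alternatively argue through the totally symmetric part $E$ of $\nabla A$ together with $|E|^2\ge\tfrac{3}{n+2}|\operatorname{tr}E|^2$, but for $n=2$ the trace of $E$ is $\nabla H\pm\tfrac23 w$, which leads to a strictly larger constant in front of $|w|^2$ (hence a weaker estimate); the two--index decomposition above is the efficient route.
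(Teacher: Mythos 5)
Your proof is correct. The paper itself does not prove this lemma; it is quoted from [HLY] (Lemma 3.1 there) and used as a black box, so there is no in-paper proof to compare against. Your argument stands on its own: the pure-trace/trace-free split of $T_{kij}=\nabla_k h^\alpha_{ij}$ in the last two indices gives $|\nabla A|^2=\tfrac12|\nabla H|^2+|\mathring T|^2$ exactly for $n=2$; the two traced Codazzi identities for $k=1,2$ translate, in your $(u_k,v_k)$ coordinates on the trace-free part, to $u_1+v_2=\tfrac12\nabla_1 H^\alpha\pm w^\alpha_1$ and $v_1-u_2=\tfrac12\nabla_2 H^\alpha\pm w^\alpha_2$; the inequality $a^2+b^2\ge\tfrac12(a\pm b)^2$ then bounds $|\mathring T^\alpha|^2$ from below by $\sum_i(\tfrac12\nabla_iH^\alpha\pm w^\alpha_i)^2$; and the Peter--Paul estimate $(x+y)^2\ge(1-s)y^2-(s^{-1}-1)x^2$ with $s=4\eta$ lands precisely on the coefficients $\tfrac34-\eta$ and $\tfrac1{4\eta}-1$ after adding back $\tfrac12|\nabla H|^2$. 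I checked each of these steps and they are right.

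The route you take is not the usual one: the textbook proof of such estimates (and, I expect, the one in [HLY]) proceeds via the totally symmetric part $E_{kij}=\tfrac13(T_{kij}+T_{ijk}+T_{jki})$ of $\nabla A$, the trace identity $\sum_kE_{ikk}=\nabla_iH^\alpha+\tfrac23 w^\alpha_i$ (which you correctly compute), and the pointwise orthogonality of $E$ against its complement in $\nabla A$. Your version avoids the three-index symmetrization entirely by exploiting $\dim\Sigma=2$ to reduce the trace-free block to two scalars per normal direction per $k$; this is shorter and more transparent for surfaces, at the cost of not generalizing to higher-dimensional $\Sigma$. One caveat worth flagging in your closing aside: the bound $|\nabla A|^2\ge|E|^2\ge\tfrac{3}{n+2}|\mathrm{tr}\,E|^2$ by itself does give a weaker constant, as you say, but the full symmetric-decomposition argument also uses the orthogonal complement of $E$ in $\nabla A$ and recovers the same constant as yours; so the comparison you draw is to a naive truncation of that argument, not to the argument itself. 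This does not affect the correctness of your proof.
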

Now we can prove the following theorem.
\begin{theorem}
Suppose $\Sigma$ is a symplectic surface in the K\"ahler surface $(M,J,\omega,\overline{g})$ with positive holomorphic sectional curvature and $1\leq \lambda <1+\frac{1}{200}$, and $|\overline{\nabla}Rm|\leq Kk_{1}(\lambda-1)$ for a positive constant $K\leq\min\{2, 2k_{1}\}$. For any $\sigma$ satisfying $\frac{1}{2}+\frac{24(\lambda-1)}{1-34(\lambda-1)}<\sigma\leq \frac{2}{3}$, and we set
$$b=\frac{2\sigma-1}{\sigma}(8-7\lambda)-4K(\lambda-1),$$
and
\[a_{1}=9(\lambda+1)^{2},\]
\[a_{2}=9(\lambda+1)^{2}-\frac{12(3-4\sigma)}{2\sigma-1}b,\]
\[a_{3}=\frac{350-444\sigma}{2\sigma-1}(\lambda-1)+\frac{8(3-4\sigma)}{2\sigma-1}(23\lambda-\frac{41}{2})b-\frac{8(3-4\sigma)(\sigma+1)}{(2\sigma-1)^{2}}b^{2}.\]
\[t_{0}=\frac{a_{2}+\sqrt{a_{2}^{2}+4a_{1}a_{3}}}{2a_{1}}, \quad \delta=\max\{t_{0}, \frac{13\lambda-10}{3(\lambda+2)}\}\]
If we assume that $|A|^{2}\leq \sigma |H|^{2}+bk_{1}$ and $\cos\alpha\geq \sqrt{\delta}$ holds on the initial surface, then it remains true along the symplectic mean curvature flow.
\end{theorem}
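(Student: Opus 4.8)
The plan is to establish the two statements in turn: first that $\cos\alpha\ge\sqrt\delta$ is preserved, which is self-contained, and then that $|A|^{2}\le\sigma|H|^{2}+bk_{1}$ is preserved, using the angle bound to tame the ambient curvature. Both are maximum-principle arguments on the compact surfaces $\Sigma_{t}$.

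For the angle bound, I would first note that $\delta\ge\frac{13\lambda-10}{3(\lambda+2)}$ together with $1\le\lambda<1+\frac1{200}$ forces $\sqrt\delta\ge\frac1{\sqrt3}$, which is far larger than $\frac{29(\lambda-1)}{\sqrt{(48-24\lambda)^{2}+(29\lambda-29)^{2}}}$. So Theorem 3.1 applies verbatim: the right-hand side of (\ref{EvolutionCos}) is nonnegative wherever $\cos\alpha$ realizes its spatial minimum, hence $\min_{\Sigma_{t}}\cos\alpha$ is nondecreasing and $\cos\alpha\ge\sqrt\delta$ persists. From here on I work pointwise in an orthonormal frame adapted as in the proof of Theorem 3.1 --- so that in (\ref{J}) one may take $z=0$, $y=\sin\alpha$ --- possibly also diagonalizing part of $A$, and estimate every ambient curvature component through Lemma \ref{ECO} using $\sin^{2}\alpha=1-\cos^{2}\alpha\le1-\delta$.

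For the pinching, set $f=|A|^{2}-\sigma|H|^{2}$; I want $f\le bk_{1}$ to persist. Using Lemma 2.1 together with (\ref{A}) and (\ref{H}),
\[(\partial_{t}-\Delta)f=-2|\nabla A|^{2}+2\sigma|\nabla H|^{2}+\mathcal{R}+2P_{1}+2P_{2}-2\sigma P_{3},\]
where $\mathcal{R}$ gathers the two $\overline{\nabla}Rm$ terms and the four quadratic-in-$A$ ambient curvature terms of (\ref{A}) together with $-2\sigma R_{\alpha k\beta k}H^{\alpha}H^{\beta}$ from (\ref{H}). The gradient terms I would handle by Lemma \ref{L3.1} with $\eta=\frac{3-4\sigma}{4}>0$: this makes the $|\nabla H|^{2}$-coefficient $2\sigma-\frac32+2\eta$ vanish and yields $-2|\nabla A|^{2}+2\sigma|\nabla H|^{2}\le\frac{4(2\sigma-1)}{3-4\sigma}|w|^{2}$. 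The reaction terms $2P_{1}+2P_{2}-2\sigma P_{3}$ I would control with the algebraic inequalities from [HLY], which bound them by a multiple of $f|A|^{2}$ plus quantities nonpositive at a spatial maximum of $f$. Every term of $\mathcal{R}$, and $|w|^{2}$, is then estimated through Lemma \ref{ECO}; here I would use that the tangential curvature sum $\sum_{k}R_{lkik}$ collapses to $R_{1212}\delta_{li}$, that the $8R_{\alpha\beta jk}h_{ik}^{\beta}h_{ij}^{\alpha}$ block is governed by $R_{1234}$, whose nonnegativity is exactly the condition $\cos^{2}\alpha\ge\frac{13\lambda-10}{3(\lambda+2)}$ (part 14 of Lemma \ref{ECO}), and that $|\overline{\nabla}Rm|\le Kk_{1}(\lambda-1)$ bounds the derivative-of-curvature terms by $Kk_{1}(\lambda-1)|A|$. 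Finally, at a point and first time where $f=bk_{1}$ I would substitute $|A|^{2}=\sigma|H|^{2}+bk_{1}$ everywhere; the outcome should be $(\partial_{t}-\Delta)f\le0$ provided $a_{1}(\cos^{2}\alpha)^{2}-a_{2}\cos^{2}\alpha-a_{3}\ge0$, and since $a_{1}=9(\lambda+1)^{2}>0$ this holds for $\cos^{2}\alpha\ge t_{0}$, which is guaranteed by $\cos^{2}\alpha\ge\delta\ge t_{0}$. A standard perturbation ($f\mapsto f-\varepsilon(1+t)$) then closes the maximum-principle argument and gives $f\le bk_{1}$ for all $t$.

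The hard part will be the bookkeeping in the last step: expanding the ambient curvature terms of (\ref{A}), the $|w|^{2}$ term produced by Lemma \ref{L3.1}, the $\overline{\nabla}Rm$ contributions and the [HLY] reaction inequalities; estimating each through Lemma \ref{ECO} in terms of $k_{1}$, $\lambda-1$ and $\cos^{2}\alpha$; and then reassembling the total so that, after $|A|^{2}=\sigma|H|^{2}+bk_{1}$ is inserted, the coefficient of $|H|^{2}$ is captured precisely by $a_{1}t^{2}-a_{2}t-a_{3}$ with $t=\cos^{2}\alpha$ and every remaining term is nonpositive. The formula for $b$, the choice $\eta=\frac{3-4\sigma}{4}$, the admissible range $\frac12+\frac{24(\lambda-1)}{1-34(\lambda-1)}<\sigma\le\frac23$ with $K\le\min\{2,2k_{1}\}$, and the constants $a_{1},a_{2},a_{3}$ are exactly what is needed to force these cancellations and the required positivities; each individual estimate, however, is a direct application of Lemmas \ref{ECO} and \ref{L3.1}.
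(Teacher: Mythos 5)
Your plan matches the paper's almost step for step: reduce the angle bound to Theorem~3.1 (both of you note $\sqrt\delta\geq\tfrac{1}{\sqrt3}$), track $|A|^{2}-\sigma|H|^{2}$, absorb the gradient terms with Lemma~\ref{L3.1} at $\eta=\tfrac34-\sigma$, estimate $|w|^{2}$ and the ambient curvature terms through Lemma~\ref{ECO}, use $R_{1234}\geq0$ from part 14), choose the frame $e_{3}=H/|H|$ and pass to trace-free parts as in [HLY], and close with the maximum principle. That is the paper's proof.

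There is, however, a concrete misidentification in your final assembly. You write that after inserting $|A|^{2}=\sigma|H|^{2}+bk_{1}$ ``the coefficient of $|H|^{2}$ is captured precisely by $a_{1}t^{2}-a_{2}t-a_{3}$.'' It is not. In the paper's bookkeeping, after substituting $|H|^{2}=\tfrac{2}{2\sigma-1}(|\mathring h_{3}|^{2}+|\mathring h_{4}|^{2}-Q-bk_{1})$ the reaction side organizes into two nonpositive quadratics plus $C_{1}(|\mathring h_{3}|^{2}+|\mathring h_{4}|^{2})+C_{2}$, where $C_{1}$ is the genuine coefficient of the second fundamental form terms and $C_{2}$ is the zeroth-order, $A$-free remainder. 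The quadratic $f(t)=-a_{1}t^{2}+a_{2}t+a_{3}$ bounds $C_{2}$ only; the condition $\cos^{2}\alpha\geq t_{0}$ is what makes $C_{2}\leq0$. The inequality $C_{1}\leq0$ is a separate estimate, and it is exactly where the precise formula $b=\tfrac{2\sigma-1}{\sigma}(8-7\lambda)-4K(\lambda-1)$ is forced (it is designed so that, after Lemma~\ref{ECO} estimates, $C_{1}$ lands $\leq 0$). If you try to pour the quadratic in $t$ into the $|H|^{2}$-coefficient you will not find the cancellations, and the role of $b$ in the theorem will look mysterious. A second, smaller gap: the adapted frame $e_{3}=H/|H|$ is unavailable at points where $|H|=0$, and the paper treats that locus separately (using $2P_{1}+2P_{2}\leq3|A|^{4}$, $P_{3}=0$) before concluding with the maximum principle; your proposal should say this explicitly.
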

\begin{proof}
Since we assume $\lambda\leq 1+\frac{1}{200}$, we have
\begin{equation*}
\frac{29(\lambda-1)}{\sqrt{(48-24\lambda)^{2}+(29\lambda-29)^{2}}}
<\frac{\sqrt{3}}{3}\leq \sqrt{\frac{13\lambda-10}{3(\lambda+2)}}\leq \sqrt{\delta}.
\end{equation*}
Then by Theorem 4.1 we know $\cos\alpha\geq \sqrt{\delta}$ is preserved by the mean curvature flow.

Since $|\overline{\nabla}Rm|\leq Kk_{1}(\lambda-1)$, Then
\begin{eqnarray*}
\begin{aligned}
\frac{\partial}{\partial t}|A|^{2}=&\Delta |A|^{2}+16Kk_{1}(\lambda-1)|A|-2|\nabla A|^{2}-4R_{lijk}h_{lk}^{\alpha}h_{ij}^{\alpha}+8R_{\alpha\beta jk}h_{ik}^{\beta}h_{ij}^{\alpha}\\
&-4R_{lkik}h_{lj}^{\alpha}h_{ij}^{\alpha}+2R_{\alpha k\beta k}h_{ij}^{\beta}h_{ij}^{\alpha}+2P_{1}+2P_{2}\\
\leq &\Delta |A|^{2}-2|\nabla A|^{2}-4R_{lijk}h_{lk}^{\alpha}h_{ij}^{\alpha}+8R_{\alpha\beta jk}h_{ik}^{\beta}h_{ij}^{\alpha}\\
&-4R_{lkik}h_{lj}^{\alpha}h_{ij}^{\alpha}+2R_{\alpha k\beta k}h_{ij}^{\beta}h_{ij}^{\alpha}+2P_{1}+2P_{2}+8Kk_{1}(\lambda-1)+8Kk_{1}(\lambda-1)|A|^{2}
\end{aligned}
\end{eqnarray*}
From the calculation in the proof of Theorem 3.2 in [HLY], we have
\begin{itemize}
\item $-4R_{lijk}h_{lk}^{\alpha}h_{ij}^{\alpha}=-4R_{1212}(|A|^{2}-|H|^{2}),$
\item $8R_{\alpha\beta jk}h_{ik}^{\beta}h_{ij}^{\alpha}=8R_{1234}(|A|^{2}-|\overline{\nabla}J_{\Sigma_{t}}|^{2}),$
\item $-4R_{lkik}h_{lj}^{\alpha}h_{ij}^{\alpha}=-R_{1212}|A|^{2},$
\item $2R_{\alpha k\beta k}h_{ij}^{\beta}h_{ij}^{\alpha}=2R_{33}(h_{ij}^{3})^{2}+2R_{44}(h_{ij}^{4})^{2}+4R_{34}h_{ij}^{3}h_{ij}^{4}-2R_{3434}|A|^{2}.$
\end{itemize}
Hence we have
\begin{eqnarray*}
\begin{aligned}
(\frac{\partial}{\partial t}-\Delta)|A|^{2}\leq&-2|\nabla A|^{2}+8(R_{1234}-R_{1212})|A|^{2}-2R_{3434}|A|^{2}+4R_{1212}|H|^{2}\\
&-8R_{1234}|\overline{\nabla}J_{\Sigma_{t}}|^{2}+2R_{33}(h_{ij}^{3})^{2}+2R_{44}(h_{ij}^{4})^{2}+4R_{34}h_{ij}^{3}h_{ij}^{4}\\
&+2P_{1}+2P_{2}+8Kk_{1}(\lambda-1)+8Kk_{1}(\lambda-1)|A|^{2}
\end{aligned}
\end{eqnarray*}

Since
\begin{equation*}
2R_{\alpha k\beta k}H^{\alpha}H^{\beta}=2R_{33}(H^{3})^{2}+2R_{44}(H^{4})^{2}+4R_{34}H^{3}H^{4}-2R_{3434}|H|^{2},
\end{equation*}
by (\ref{H}), we have
\begin{equation}
(\frac{\partial}{\partial t}-\Delta)|H|^{2}
=-2|\nabla H|^{2}+2R_{33}(H^{3})^{2}+2R_{44}(H^{4})^{2}+4R_{34}H^{3}H^{4}-2R_{3434}|H|^{2}+2P_{3}
\end{equation}

Hence
\begin{eqnarray}
\begin{aligned}
&(\frac{\partial}{\partial t}-\Delta)(|A|^{2}-\sigma |H|^{2})\\
\leq&-2(|\nabla A|^{2}-\sigma |\nabla H|^{2})+8(R_{1234}-R_{1212})(|A|^{2}-\sigma |H|^{2})+[8\sigma R_{1234}+(4-8\sigma)R_{1212}]|H|^{2}\\
&-8R_{1234}|\overline{\nabla}J_{\Sigma_{t}}|^{2}+2R_{33}(|A|^{2}-\sigma |H|^{2})+2(R_{44}-R_{33})(|h_{4}|^{2}-\sigma (H^{4})^{2})\\
&+4R_{34}(h_{ij}^{3}h_{ij}^{4}-\sigma H^{3}H^{4})-2R_{3434}(|A|^{2}-\sigma |H|^{2})\\
&+2P_{1}+2P_{2}-2\sigma P_{3}+8Kk_{1}(\lambda-1)+8Kk_{1}(\lambda-1)|A|^{2},
\end{aligned}
\end{eqnarray}
where $|h_{\alpha}|^{2}$ and $H^{\alpha}$ are defined by
\[|h_{\alpha}|^{2}=h^{\alpha}_{ij}h^{\alpha}_{ij}, \quad H^{\alpha}=\sum_{i}h^{\alpha}_{ii}.\]

Let $Q=|A|^{2}-\sigma|H|^{2}-bk_{1}.$  By assumption, we have $\cos^{2}\alpha \geq \frac{13\lambda-10}{3(\lambda+2)}$, then by Lemma \ref{ECO}, we have $R_{1234}\geq 0$.  Since $|\overline{\nabla}J_{\Sigma_{t}}|^{2}\geq \frac{1}{2}|H|^{2}$, we have
\begin{eqnarray}
\begin{aligned}
&(\frac{\partial}{\partial t}-\Delta)Q\\
=&(\frac{\partial}{\partial t}-\Delta)(|A|^{2}-\sigma |H|^{2})\\
\leq&-2(|\nabla A|^{2}-\sigma |\nabla H|^{2})+8(R_{1234}-R_{1212})(Q+bk_{1})+[8\sigma R_{1234}+(4-8\sigma)R_{1212}]|H|^{2}\\
&-8R_{1234}|\overline{\nabla}J_{\Sigma_{t}}|^{2}+2R_{33}(Q+bk_{1})+2(R_{44}-R_{33})(|h_{4}|^{2}-\sigma (H^{4})^{2})+4R_{34}(h_{ij}^{3}h_{ij}^{4}-\sigma H^{3}H^{4})\\
&-2R_{3434}(Q+bk_{1})+2P_{1}+2P_{2}-2\sigma P_{3}+8Kk_{1}(\lambda-1)+8Kk_{1}(\lambda-1)(Q+\sigma |H|^{2}+bk_{1})\\
\leq&-2(|\nabla A|^{2}-\sigma |\nabla H|^{2})+8(R_{1234}-R_{1212})(Q+bk_{1})+[8\sigma R_{1234}+(4-8\sigma)R_{1212}]|H|^{2}\\
&-4R_{1234}|H|^{2}+2R_{33}(Q+bk_{1})+2(R_{44}-R_{33})(|h_{4}|^{2}-\sigma (H^{4})^{2})+4R_{34}(h_{ij}^{3}h_{ij}^{4}-\sigma H^{3}H^{4})\\
&-2R_{3434}(Q+bk_{1})+2P_{1}+2P_{2}-2\sigma P_{3}+8Kk_{1}(\lambda-1)+8Kk_{1}(\lambda-1)(Q+\sigma |H|^{2}+bk_{1})\\
=&-2(|\nabla A|^{2}-\sigma |\nabla H|^{2})+CQ+[(8\sigma-4)(R_{1234}-R_{1212})+8\sigma Kk_{1}(\lambda-1)]|H|^{2}\\&+2(R_{44}-R_{33})(|h_{4}|^{2}-\sigma (H^{4})^{2})+4R_{34}(h_{ij}^{3}h_{ij}^{4}-\sigma H^{3}H^{4})+2P_{1}+2P_{2}-2\sigma P_{3}\\
&+[8(R_{1234}-R_{1212})+2R_{33}-2R_{3434}+8Kk_{1}(\lambda-1)]bk_{1}+8Kk_{1}(\lambda-1),
\end{aligned}
\end{eqnarray}
here $C$ is a function. In the following, $C$ always means a function, which might be different at places.

In Lemma \ref{L3.1}, we choose $\eta=\frac{3}{4}-\sigma$, then
\begin{equation*}
|\nabla A|^{2}\geq\sigma|\nabla H|^{2}-\frac{4\sigma-2}{3-4\sigma}|w|^{2},
\end{equation*}
where
$$|w|^{2}=R^{2}_{3212}+R^{2}_{3121}+R^{2}_{4121}+R^{2}_{4212}.$$
Assume $\alpha\in[0,\frac{\pi}{2})$ and $y\geq 0, z\geq 0$. Since $y^{2}+z^{2}=\sin^{2}\alpha$, then by Lemma \ref{ECO}, we have
\begin{eqnarray*}
\begin{aligned}
&|w|^{2}\\
\leq &\frac{k^{2}_{1}}{512}\{[(23+6(\cos^{2}\alpha+z^{2}))^{2}+(23+6(\cos^{2}\alpha+y^{2}))^{2}(\lambda-1)^{2}\\
&+12\cos\alpha[(23+6(\cos^{2}\alpha+z^{2}))y+(23+6(\cos^{2}\alpha+y^{2}))z](\lambda^{2}-1)\\
&+144\sin^{2}\alpha\cos^{2}\alpha(\lambda+1)^{2}\}\\
\leq & \frac{k^{2}_{1}}{512}[1682(\lambda-1)^{2}+261(\lambda^{2}-1)+144\sin^{2}\alpha\cos^{2}\alpha(\lambda+1)^{2},
\end{aligned}
\end{eqnarray*}
where we have used Cauchy inequality,
\begin{eqnarray*}
\begin{aligned}
&[(23+6(\cos^{2}\alpha+z^{2}))y+(23+6(\cos^{2}\alpha+y^{2}))z]^{2}\\
\leq& [(23+6(\cos^{2}\alpha+z^{2}))^{2}+(23+6(\cos^{2}\alpha+y^{2}))^{2}](y^{2}+z^{2})\\
\leq&2\cdot 29^{2}\sin^{2}\alpha.
\end{aligned}
\end{eqnarray*}
Since $1\leq \lambda\leq 1+\frac{1}{100}$, we have
\[|w|^{2}\leq \frac{k^{2}_{1}}{32}[34(\lambda-1)+9\sin^{2}\alpha\cos^{2}\alpha(\lambda+1)^{2}].\]

Since $\frac{1}{2}<\sigma<\frac{3}{4}$, we have
\begin{equation}
|\nabla A|^{2}\geq\sigma|\nabla H|^{2}-\frac{(2\sigma-1)k^{2}_{1}}{16(3-4\sigma)}[34(\lambda-1)+9\sin^{2}\alpha\cos^{2}\alpha(\lambda+1)^{2}].
\end{equation}

Hence we obtain
\begin{eqnarray}
\begin{aligned}
&(\frac{\partial}{\partial t}-\Delta)Q\\
\leq & CQ+[(8\sigma-4)(R_{1234}-R_{1212})+8\sigma Kk_{1}(\lambda-1)]|H|^{2}\\&+2(R_{44}-R_{33})(|h_{4}|^{2}-\sigma (H^{4})^{2})+4R_{34}(h_{ij}^{3}h_{ij}^{4}-\sigma H^{3}H^{4})+2P_{1}+2P_{2}-2\sigma P_{3}\\
&+[8(R_{1234}-R_{1212})+2R_{33}-2R_{3434}+8Kk_{1}(\lambda-1)]bk_{1}+8Kk_{1}(\lambda-1)\\
&+\frac{(2\sigma-1)k^{2}_{1}}{8(3-4\sigma)}[34(\lambda-1)+9\sin^{2}\alpha\cos^{2}\alpha(\lambda+1)^{2}].
\end{aligned}
\end{eqnarray}

At the point $|H|\neq 0$, we choose $\{e_{3},e_{4}\}$ for $N\Sigma$ such that $e_{3}=H/|H|$ and choose $\{e_{1},e_{2}\}$ for $T\Sigma$ such that $h_{ij}^{3}=\lambda_{i}\delta_{ij}$. Set $h_{ij}^{\alpha}=\mathring{h}_{ij}^{\alpha}+\frac{1}{2}H^{\alpha}g_{ij}$, then $\mathring{h}_{ij}^{4}=h_{ij}^{4}, \mathring{h}_{ij}^{3}=h_{ij}^{3}-\frac{1}{2}|H|g_{ij}$. Denote the norm of $(h_{ij}^{\alpha})$, $(\mathring{h}_{ij}^{\alpha})$ by $|h_{\alpha}|, |\mathring{h}_{\alpha}|$ respectively.  Then $H^{3}=|H|$ and $H^{4}=0$. From the calculation from the proof of Theorem 3.2 in [HLY], we have
\begin{eqnarray*}
\begin{aligned}
&2P_{1}+2P_{2}-2\sigma P_{3}\\
\leq &2|\mathring{h}_{3}|^{4}+2|\mathring{h}_{4}|^{4}+(2-2\sigma)|\mathring{h}_{3}|^{2}|H|^{2}-\frac{2\sigma-1}{2}|H|^{4}+8|\mathring{h}_{3}|^{2}|\mathring{h}_{4}|^{2}.
\end{aligned}
\end{eqnarray*}

Then we have
\begin{eqnarray*}
\begin{aligned}
&(\frac{\partial}{\partial t}-\Delta)Q\\
\leq& CQ+[(8\sigma-4)(R_{1234}-R_{1212})+8\sigma Kk_{1}(\lambda-1)]|H|^{2}\\
&+2(R_{44}-R_{33})|\mathring{h}_{4}|^{2}+4R_{34}\mathring{h}_{ij}^{3}\mathring{h}_{ij}^{4}\\
&+2|\mathring{h}_{3}|^{4}+2|\mathring{h}_{4}|^{4}+(2-2\sigma)|\mathring{h}_{3}|^{2}|H|^{2}-\frac{2\sigma-1}{2}|H|^{4}+8|\mathring{h}_{3}|^{2}|\mathring{h}_{4}|^{2}\\
&+[8(R_{1234}-R_{1212})+2R_{33}-2R_{3434}+8Kk_{1}(\lambda-1)]bk_{1}+8Kk_{1}(\lambda-1)\\
&+\frac{(2\sigma-1)k^{2}_{1}}{8(3-4\sigma)}[34(\lambda-1)+9\sin^{2}\alpha\cos^{2}\alpha(\lambda+1)^{2}]
\end{aligned}
\end{eqnarray*}

Since $|H|^{2}=\frac{2}{2\sigma-1}(|\mathring{h}_{3}|^{2}+|\mathring{h}_{4}|^{2}-Q-bk_{1})$, then
\begin{eqnarray*}
\begin{aligned}
&(\frac{\partial}{\partial t}-\Delta)Q\\
\leq& CQ+\frac{2}{2\sigma-1}[(8\sigma-4)(R_{1234}-R_{1212})+8\sigma Kk_{1}(\lambda-1)](|\mathring{h}_{3}|^{2}+|\mathring{h}_{4}|^{2})\\
&+2(R_{44}-R_{33})|\mathring{h}_{4}|^{2}+2|R_{34}|(|\mathring{h}_{3}|^{2}+|\mathring{h}_{4}|^{2})+2|\mathring{h}_{3}|^{4}+2|\mathring{h}_{4}|^{4}\\
&+\frac{4-4\sigma}{2\sigma-1}|\mathring{h}_{3}|^{2}(|\mathring{h}_{3}|^{2}+|\mathring{h}_{4}|^{2}-bk_{1})-\frac{2}{2\sigma-1}(|\mathring{h}_{3}|^{2}+|\mathring{h}_{4}|^{2}-bk_{1})^{2}+8|\mathring{h}_{3}|^{2}|\mathring{h}_{4}|^{2}\\
&+[8(R_{1234}-R_{1212})+2R_{33}-2R_{3434}+8Kk_{1}(\lambda-1)]bk_{1}+8Kk_{1}(\lambda-1)\\
&+\frac{(2\sigma-1)k^{2}_{1}}{8(3-4\sigma)}[34(\lambda-1)+9\sin^{2}\alpha\cos^{2}\alpha(\lambda+1)^{2}]\\
&-\frac{2}{2\sigma-1}[(8\sigma-4)(R_{1234}-R_{1212})+8\sigma Kk_{1}(\lambda-1)]bk_{1}\\
\leq& CQ+\frac{4\sigma-4}{2\sigma-1}|\mathring{h}_{4}|^{4}+\frac{12\sigma-8}{2\sigma-1}|\mathring{h}_{3}|^{2}|\mathring{h}_{4}|^{2}+\frac{4\sigma-4}{2\sigma-1}bk_{1}|\mathring{h}_{3}|^{2}\\
&+[8(R_{1234}-R_{1212})+2|R_{34}|+2|R_{44}-R_{33}|+\frac{16\sigma Kk_{1}(\lambda-1)}{2\sigma-1}+\frac{4bk_{1}}{2\sigma-1}](|\mathring{h}_{3}|^{2}+|\mathring{h}_{4}|^{2})\\
&+[2R_{33}-2R_{3434}+8Kk_{1}(\lambda-1)-\frac{16\sigma Kk_{1}}{2\sigma-1}(\lambda-1)]bk_{1}+8Kk_{1}(\lambda-1)\\
&+\frac{(2\sigma-1)k^{2}_{1}}{8(3-4\sigma)}[34(\lambda-1)+9\sin^{2}\alpha\cos^{2}\alpha(\lambda+1)^{2}]-\frac{2b^{2}k^{2}_{1}}{2\sigma-1}\\
=& CQ+\frac{4\sigma-4}{2\sigma-1}(|\mathring{h}_{4}|^{2}-\frac{bk_{1}}{2})^{2}+\frac{12\sigma-8}{2\sigma-1}|\mathring{h}_{3}|^{2}|\mathring{h}_{4}|^{2}\\
&+[8(R_{1234}-R_{1212})+2|R_{34}|+2|R_{44}-R_{33}|+\frac{16\sigma Kk_{1}(\lambda-1)}{2\sigma-1}+\frac{4\sigma bk_{1}}{2\sigma-1}](|\mathring{h}_{3}|^{2}+|\mathring{h}_{4}|^{2})\\
&+[2R_{33}-2R_{3434}+8Kk_{1}(\lambda-1)-\frac{16\sigma Kk_{1}}{2\sigma-1}(\lambda-1)]bk_{1}+8Kk_{1}(\lambda-1)\\
&+\frac{(2\sigma-1)k^{2}_{1}}{8(3-4\sigma)}[34(\lambda-1)+9\sin^{2}\alpha\cos^{2}\alpha(\lambda+1)^{2}]-\frac{\sigma+1}{2\sigma-1}b^{2}k^{2}_{1}
\end{aligned}
\end{eqnarray*}

Set
\[C_{1}=8(R_{1234}-R_{1212})+2|R_{34}|+2|R_{44}-R_{33}|+\frac{16\sigma Kk_{1}(\lambda-1)}{2\sigma-1}+\frac{4\sigma bk_{1}}{2\sigma-1},\]
and
\begin{eqnarray*}
\begin{aligned}
C_{2}=&[2R_{33}-2R_{3434}+8Kk_{1}(\lambda-1)-\frac{16\sigma Kk_{1}}{2\sigma-1}(\lambda-1)]bk_{1}+8Kk_{1}(\lambda-1)\\
&+\frac{(2\sigma-1)k^{2}_{1}}{8(3-4\sigma)}[34(\lambda-1)+9\sin^{2}\alpha\cos^{2}\alpha(\lambda+1)^{2}]-\frac{\sigma+1}{2\sigma-1}b^{2}k^{2}_{1}.
\end{aligned}
\end{eqnarray*}

Then we have
\begin{eqnarray}
\begin{aligned}
(\frac{\partial}{\partial t}-\Delta)Q\leq& CQ+\frac{4\sigma-4}{2\sigma-1}(|\mathring{h}_{4}|^{2}-\frac{bk_{1}}{2})^{2}+\frac{12\sigma-8}{2\sigma-1}|\mathring{h}_{3}|^{2}|\mathring{h}_{4}|^{2}\\
&+C_{1}(|\mathring{h}_{3}|^{2}+|\mathring{h}_{4}|^{2})+C_{2}.
\end{aligned}
\end{eqnarray}

By Lemma \ref{ECO}, we can estimate $C_{1}$ as following
\begin{eqnarray*}
\begin{aligned}
C_{1}\leq &\frac{2}{3}[(10+6\cos^{2}\alpha)\lambda-(10+3\sin^{2}\alpha)]k_{1}-2[(3+3\cos^{2}\alpha)-2\lambda]k_{1}\\
&+\frac{(29-6\cos^{2}\alpha)(\lambda-1)}{8}k_{1}+9(\lambda-1)k_{1}+\frac{16\sigma Kk_{1}(\lambda-1)}{2\sigma-1}+\frac{4\sigma bk_{1}}{2\sigma-1}\\
=&[\frac{559+78\cos^{2}\alpha}{24}\lambda-\frac{655+78\cos^{2}\alpha}{24}+\frac{16K(\lambda-1)}{2\sigma-1}+\frac{4\sigma b}{2\sigma-1}]k_{1}\\
\leq& [32(\lambda-1)-4\lambda+\frac{16\sigma K(\lambda-1)}{2\sigma-1}+\frac{4\sigma b}{2\sigma-1}]k_{1}
\end{aligned}
\end{eqnarray*}

Then by the condition $b= \frac{2\sigma-1}{\sigma}(8-7\lambda)-4K(\lambda-1)$, we know $C_{1}\leq 0$.

By Lemma \ref{ECO} and since $K=\min\{2,2k_{1}\}$, we can estimate $C_{2}$ as following
\begin{eqnarray*}
\begin{aligned}
C_{2}\leq& [7\lambda-\frac{9+3\cos^{2}\alpha}{2}+16(\lambda-1)-\frac{16K\sigma (\lambda-1)}{2\sigma-1}]bk^{2}_{1}\\
&+16k^{2}_{1}(\lambda-1)+\frac{(2\sigma-1)k^{2}_{1}}{8(3-4\sigma)}[34(\lambda-1)+9\sin^{2}\alpha\cos^{2}\alpha(\lambda+1)^{2}]-\frac{\sigma+1}{2\sigma-1}b^{2}k^{2}_{1}\\
=&\{[23\lambda-\frac{41+3\cos^{2}\alpha}{2}-\frac{16K\sigma (\lambda-1)}{2\sigma-1}]b-\frac{\sigma+1}{2\sigma-1}b^{2}\\
&+16(\lambda-1)+\frac{(2\sigma-1)}{8(3-4\sigma)}[34(\lambda-1)+9\sin^{2}\alpha\cos^{2}\alpha(\lambda+1)^{2}]\}k^{2}_{1}
\end{aligned}
\end{eqnarray*}

Set $t=\cos^{2}\alpha$. Then
\begin{eqnarray*}
\begin{aligned}
C_{2}\leq& \frac{(2\sigma-1)k_{1}^{2}}{8(3-4\sigma)}\{-9(\lambda+1)^{2}t^{2}+[9(\lambda+1)^{2}-\frac{12(3-4\sigma)}{2\sigma-1}b]t\\
&+\frac{350-444\sigma}{2\sigma-1}(\lambda-1)+\frac{8(3-4\sigma)}{2\sigma-1}(23\lambda-\frac{41}{2})b-\frac{8(3-4\sigma)(\sigma+1)}{(2\sigma-1)^{2}}b^{2}\}
\end{aligned}
\end{eqnarray*}
and
\begin{equation}
f(t)=-a_{1}t^{2}+a_{2}t+a_{3},
\end{equation}
where
\[a_{1}=9(\lambda+1)^{2},\]
\[a_{2}=9(\lambda+1)^{2}-\frac{12(3-4\sigma)}{2\sigma-1}b,\]
\[a_{3}=\frac{350-444\sigma}{2\sigma-1}(\lambda-1)+\frac{8(3-4\sigma)}{2\sigma-1}(23\lambda-\frac{41}{2})b-\frac{8(3-4\sigma)(\sigma+1)}{(2\sigma-1)^{2}}b^{2}.\]
\[t_{0}=\frac{a_{2}+\sqrt{a_{2}^{2}+4a_{1}a_{3}}}{2a_{1}}.\]
Then $C_{2}\leq \frac{(2\sigma-1)k_{1}^{2}}{8(3-4\sigma)}f(t)$ and $f(t_{0})=0$.

We first to check that $f(1)<0$, i.e., $-a_{1}+a_{2}+a_{3}<0,$ then implies that $t_{0}<1$. That is
\[\frac{350-444\sigma}{2\sigma-1}(\lambda-1)+\frac{8(3-4\sigma)}{2\sigma-1}(23\lambda-22)b-\frac{8(3-4\sigma)(\sigma+1)}{(2\sigma-1)^{2}}b^{2}<0.\]

Set $x=\lambda -1$. Then $b=\frac{2\sigma-1}{\sigma}(1-7x)-4Kx,$ we have
\begin{eqnarray*}
\begin{aligned}
f(1)=&\frac{350-444\sigma}{2\sigma-1}x+\frac{8(3-4\sigma)}{2\sigma-1}(23x+1)(\frac{2\sigma-1}{\sigma}(1-7x)-4Kx)\\
&-\frac{8(3-4\sigma)(\sigma+1)}{(2\sigma-1)^{2}}(\frac{2\sigma-1}{\sigma}(1-7x)-4Kx)^{2}\\
=&\frac{350-444\sigma}{2\sigma-1}x+\frac{8(3-4\sigma)}{\sigma}(23x+1)(1-7x)-\frac{32(3-4\sigma)Kx}{2\sigma-1}\\
&-\frac{8(3-4\sigma)(\sigma+1)}{\sigma^{2}}(1-(7+\frac{4\sigma K}{2\sigma-1})x)^{2}\\
\leq&\frac{350-444\sigma}{2\sigma-1}x+\frac{8(3-4\sigma)}{\sigma}(23x+1)(1-7x)-\frac{32(3-4\sigma)}{2\sigma-1}Kx\\
&-\frac{8(3-4\sigma)(\sigma+1)}{\sigma^{2}}+\frac{16(3-4\sigma)(\sigma+1)}{\sigma^{2}}(7+\frac{4\sigma K}{2\sigma-1})x\\
\leq&\frac{350-444\sigma}{2\sigma-1}x+\frac{8(3-4\sigma)}{\sigma}(1+16x)-\frac{8(3-4\sigma)(\sigma+1)}{\sigma^{2}}\\
&+\frac{112(3-4\sigma)(\sigma+1)x}{\sigma^{2}}+\frac{64(3-4\sigma)Kx}{\sigma(2\sigma-1)}\\
\leq&\frac{350-444\sigma}{2\sigma-1}x+\frac{8(3-4\sigma)}{\sigma}(1+16x)-\frac{8(3-4\sigma)(\sigma+1)}{\sigma^{2}}\\
&+\frac{112(3-4\sigma)(\sigma+1)x}{\sigma^{2}}+\frac{128(3-4\sigma)x}{\sigma(2\sigma-1)}\\
\leq&\frac{8(3-4\sigma)}{\sigma^{2}}(-1+(30\sigma+14)x)+(222-956\sigma+\frac{384}{\sigma})\frac{x}{2\sigma-1}\\
\leq&\frac{2}{\sigma^{2}}[4(3-4\sigma)(-1+(30\sigma+14)x)+(-478\sigma^{3}+111\sigma^{2}+192\sigma)\frac{x}{2\sigma-1}]\\
\leq &\frac{2}{\sigma^{2}}(\frac{8}{3}(-1+34x)+128\frac{x}{2\sigma-1})\\
= &\frac{16}{3\sigma^{2}}(-1+34x+48\frac{x}{2\sigma-1}).
\end{aligned}
\end{eqnarray*}
Since $0\leq x\leq \frac{1}{200}$ and $\frac{1}{2}+\frac{24x}{1-34x}\leq \frac{2}{3}$, we obtain that $-1+34x+48\frac{x}{2\sigma-1}<0$. So $f(1)<0$.
Then we have $f(t)\leq 0$ for $t_{0}\leq t\leq 1$. Hence $C_{2}\leq 0$ for $\cos\alpha\geq t_{0}.$

At the point $|H|=0$, it is easy to obtain that (also see the inequality (3.12) in [HLY])
\begin{equation}
2P_{1}+2P_{2}\leq 3|A|^{4},
\end{equation}
and $P_{3}=0$. Thus, $|A|^{2}=Q+bk_{1}$, by (38) we have
\begin{eqnarray*}
\begin{aligned}
&(\frac{\partial}{\partial t}-\Delta)Q\\
\leq&-CQ+2(R_{44}-R_{33})|h_{4}|^{2}+4R_{34}h_{ij}^{3}h_{ij}^{4}+3|A|^{4}\\
&+[8(R_{1234}-R_{1212})+2R_{33}-2R_{3434}+8Kk_{1}(\lambda-1)]bk_{1}+8Kk_{1}(\lambda-1)\\
&+\frac{(2\sigma-1)k_{1}^{2}}{8(3-4\sigma)}[34(\lambda-1)+9\sin^{2}\alpha\cos^{2}\alpha(\lambda+1)^{2}]\\
\leq&-CQ+2|R_{44}-R_{33}||A|^{2}+2|R_{34}||A|^{2}+3(Q+bk_{1})^{2}\\
&+[8(R_{1234}-R_{1212})+2R_{33}-2R_{3434}+8Kk_{1}(\lambda-1)]bk_{1}+8Kk_{1}(\lambda-1)\\
&+\frac{(2\sigma-1)k_{1}^{2}}{8(3-4\sigma)}[34(\lambda-1)+9\sin^{2}\alpha\cos^{2}\alpha(\lambda+1)^{2}]\\
=&-CQ+2|R_{44}-R_{33}|(Q+bk_{1})+2|R_{34}|(Q+bk_{1})\\
&+[8(R_{1234}-R_{1212})+2R_{33}-2R_{3434}+8Kk_{1}(\lambda-1)]bk_{1}+8Kk_{1}(\lambda-1)\\
&+\frac{(2\sigma-1)k_{1}^{2}}{8(3-4\sigma)}[34(\lambda-1)+9\sin^{2}\alpha\cos^{2}\alpha(\lambda+1)^{2}]+3b^{2}k_{1}^{2}\\
=&-CQ+\frac{(2\sigma-1)k_{1}^{2}}{8(3-4\sigma)}[34(\lambda-1)+9\sin^{2}\alpha\cos^{2}\alpha(\lambda+1)^{2}]+3b^{2}k_{1}^{2}+8Kk_{1}(\lambda-1)\\
&+[2|R_{44}-R_{33}|+2|R_{34}|+8(R_{1234}-R_{1212})+2R_{33}-2R_{3434}+8Kk_{1}(\lambda-1)]bk_{1}
\end{aligned}
\end{eqnarray*}

Set
\begin{eqnarray*}
\begin{aligned}
\tilde{C}_{2}=&\frac{(2\sigma-1)k_{1}^{2}}{8(3-4\sigma)}[34(\lambda-1)+9\sin^{2}\alpha\cos^{2}\alpha(\lambda+1)^{2}]+3b^{2}k_{1}^{2}+8Kk_{1}(\lambda-1)\\
&+[2|R_{44}-R_{33}|+2|R_{34}|+8(R_{1234}-R_{1212})+2R_{33}-2R_{3434}+8Kk_{1}(\lambda-1)]bk_{1}.
\end{aligned}
\end{eqnarray*}
Then it is easy to get
\begin{equation*}
\tilde{C}_{2}=C_{1}bk_{1}+C_{2}+\frac{3\sigma-2}{2\sigma-1}b^{2}k_{1}^{2}.
\end{equation*}
Since $\frac{1}{2}<\sigma\leq \frac{2}{3}$, $C_{1}\leq 0$ and $C_{2}\leq 0$, we have $\tilde{C}_{2}\leq 0$.

Therefore, from the above argument, we have
\begin{equation*}
(\frac{\partial}{\partial t}-\Delta)Q\leq CQ.
\end{equation*}
for some function $C$. Applying the maximum principle for the above parabolic equation, we know that
\begin{equation*}
Q\leq 0
\end{equation*}
along the flow, if it is true on initial surface.
\end{proof}
\section{Long time existence and convergence}
In this section we prove the long time existence and convergence of the symplectic mean curvature flow under the assumption of Theorem \ref{Thm1.3}, using the same argument in [HLY]. For convenience of readers, we provide the detailed proof here.
\begin{theorem}
Under the assumption of Theorem \ref{Thm1.3}, the symplectic mean curvature flow exists for long time.
\end{theorem}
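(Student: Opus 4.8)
The plan is to argue by contradiction, following the blow-up scheme of [HLY]. Suppose the maximal existence time $T$ of the symplectic mean curvature flow is finite. By Theorem \ref{Thm1.3} (the pinching part, already established), the inequalities $|A|^{2}\leq\sigma|H|^{2}+bk_{1}$ and $\cos\alpha\geq\sqrt{\delta}$ are preserved on $[0,T)$; since $\sqrt{\delta}\geq\frac{\sqrt{3}}{3}>\frac{29(\lambda-1)}{\sqrt{(48-24\lambda)^{2}+(29\lambda-29)^{2}}}$, the hypotheses of Theorem 3.1 hold along the flow, so $\min_{\Sigma_{t}}\cos\alpha$ is nondecreasing and, by the absence-of-Type-I-singularity result of Section 3 (cf. [CL1], [W]), no Type~I singularity can form at $T$. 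As $M$ has bounded geometry (indeed $M\cong\mathbb{C}\mathbb{P}^{2}$ by Remark 1.3), the flow extends past $T$ as long as $\sup_{\Sigma_{t}}|A|$ stays bounded; hence $\sup_{\Sigma_{t}}|A|\to\infty$ as $t\uparrow T$.

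Next I would perform Hamilton's Type~II rescaling: choose $t_{k}\uparrow T$ and points $x_{k}$ realizing a weighted spacetime maximum of $|A|^{2}$, put $Q_{k}=|A|^{2}(x_{k},t_{k})\to\infty$, and rescale the flow parabolically by $Q_{k}$ about $F(x_{k},t_{k})$ in normal coordinates of $M$. The standard compactness theorem for mean curvature flow yields a subsequential limit $\Sigma_{\infty}$, an eternal solution in $\mathbb{R}^{4}\cong\mathbb{C}^{2}$ with $|A|\leq 1$ and $|A|(0,0)=1$. Since the rescaling carries the ambient metrics to the flat K\"ahler metric of $\mathbb{C}^{2}$ and multiplies the inhomogeneous term $bk_{1}$ by $Q_{k}^{-2}\to 0$, the limit satisfies $|A|^{2}\leq\sigma|H|^{2}$ with $\sigma\leq\tfrac23<1$, is symplectic with $\cos\alpha\geq\sqrt{\delta}>0$, and by (\ref{EvolutionCos}) its K\"ahler angle obeys $(\frac{\partial}{\partial t}-\Delta)\cos\alpha=|\overline{\nabla}J_{\Sigma_{\infty}}|^{2}\cos\alpha\geq 0$, the Ricci term vanishing on $\mathbb{C}^{2}$.

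It then remains to show $\Sigma_{\infty}$ is a plane, which contradicts $|A|(0,0)=1$. On the eternal solution $1-\cos\alpha$ is a nonnegative supersolution of the heat equation, and $\inf_{\Sigma_{\infty}}\cos\alpha(\cdot,s)$ is nondecreasing and pinched between $\sqrt{\delta}$ and $1$ for all $s\in\mathbb{R}$; a strong-maximum-principle argument as in [HLY] then forces $\cos\alpha$ to be constant and $|\overline{\nabla}J_{\Sigma_{\infty}}|^{2}\equiv 0$. Since $|H|^{2}\leq 2|\overline{\nabla}J_{\Sigma_{\infty}}|^{2}$, the limit is minimal, whence $|A|^{2}\leq\sigma|H|^{2}=0$ and $|A|\equiv 0$, a contradiction. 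Therefore $T=\infty$. The delicate step is the last one: one must check that the blow-up limit inherits the symplectic condition and the pinching in a usable form, and that the maximum principle genuinely forces $\cos\alpha$ to be constant on the complete noncompact eternal solution; this is precisely where the uniform bound $\cos\alpha\geq\sqrt{\delta}>0$ and the prior exclusion of Type~I singularities (so that only the Type~II limit, on which $|A|$ attains its spacetime maximum, need be analyzed) are essential, exactly as in [HLY]. The remaining estimates are routine manipulations of the evolution equations of Section 2.
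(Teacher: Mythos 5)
Your proposal is a genuinely different route from the paper's. The paper does not argue by contradiction or use a blow-up: it forms the auxiliary function $\phi=|H|^{2}f\bigl(\tfrac{1}{\cos\alpha}\bigr)$ with the explicit choice
\[
f(x)=\frac{x^{2}}{(2\sigma-(2\sigma-1)x)^{2}},\qquad x\in\bigl[1,\tfrac{1}{\sqrt{\delta}}\bigr],
\]
computes its evolution equation, arranges (using the inequalities $x/g\geq 4\sigma$ and $-4g'+8g/x-2=0$ with $g=f/f'$) for every dangerous term to disappear, and obtains $(\partial_t-\Delta)\phi\leq (C+2b)k_{1}\phi+\text{gradient terms}$. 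The maximum principle then gives the explicit bound $|H|^{2}(t)\leq C_{0}e^{C_{1}t}$, and the preserved pinching inequality converts this to $|A|^{2}\leq C_{2}e^{C_{1}t}+bk_{1}$, so $|A|$ cannot blow up in finite time; note this rules out \emph{all} finite-time singularities in one stroke, rendering the Type~I/Type~II dichotomy unnecessary.

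Your blow-up approach has a genuine gap at its last step, which you yourself flag but do not close. After the Type~II rescaling, $\Sigma_{\infty}$ is a complete \emph{noncompact} eternal solution in $\mathbb{C}^{2}$, and on such a manifold the strong maximum principle does not apply: the infimum of $\cos\alpha$ on a time slice need not be attained, and the fact that $\inf_{\Sigma_{\infty}}\cos\alpha(\cdot,s)$ is nondecreasing and bounded does not by itself force $\cos\alpha$ to be constant or $|\overline{\nabla}J_{\Sigma_{\infty}}|^{2}\equiv 0$. Indeed, at the basepoint one even has $|A|(0,0)=1$, hence $|H|^{2}(0,0)\geq 1/\sigma>0$ and $|\overline{\nabla}J_{\Sigma_{\infty}}|^{2}(0,0)>0$, so you would need a Liouville-type theorem for eternal symplectic mean curvature flows in $\mathbb{C}^{2}$ with $|A|\leq 1$ and $\cos\alpha\geq\sqrt{\delta}$; that is a nontrivial statement requiring its own proof (Omori--Yau or parabolic mean-value arguments, a monotonicity formula, or an entropy bound), and the citation to [HLY] is not apt here, since [HLY] proves long-time existence by exactly the same direct auxiliary-function computation that this paper reproduces, not by a blow-up dichotomy. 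Two smaller points: $1-\cos\alpha$ is a nonnegative \emph{subsolution}, not supersolution, of the heat equation on $\Sigma_{\infty}$; and the constant term $bk_{1}$ in the pinching scales by $Q_{k}^{-1}$, not $Q_{k}^{-2}$ (harmless for the limit, but worth getting right).
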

\begin{proof}
Suppose $f$ is a positive increasing function which will be determined later. Now we compute the evolution equation of $|H|^{2}f(\frac{1}{\cos\alpha})$.
\begin{eqnarray*}
\begin{aligned}
&(\frac{\partial}{\partial t}-\Delta)(|H|^{2}f(\frac{1}{\cos\alpha}))\\
=&(\frac{\partial}{\partial t}-\Delta)|H|^{2}f(\frac{1}{\cos\alpha})+|H|^{2}(\frac{\partial}{\partial t}-\Delta)f(\frac{1}{\cos\alpha})
-2\nabla |H|^{2}\cdot\nabla f(\frac{1}{\cos\alpha}).
\end{aligned}
\end{eqnarray*}
Under the assumptions of the theorem, we have
\begin{eqnarray*}
\begin{aligned}
(\frac{\partial}{\partial t}-\Delta)\cos\alpha&=|\overline\nabla J_{\Sigma_{t}}|^{2}\cos\alpha+Ric(Je_{1}, e_{2})\sin^{2}\alpha\\
&\geq |\overline\nabla J_{\Sigma_{t}}|^{2}\cos\alpha.
\end{aligned}
\end{eqnarray*}
By the evolution equation of $|H|^{2}$, we have
\begin{eqnarray*}
\begin{aligned}
&(\frac{\partial}{\partial t}-\Delta)|H|^{2}\\
=&-2|\nabla H|^{2}+2R_{33}(H^{3})^{2}+2R_{44}(H^{4})^{2}+4R_{34}H^{3}H^{4}-2R_{3434}|H|^{2}+2P_{3}\\
\leq &-2|\nabla H|^{2}+(6k_{2}-3k_{1})|H|^{2}-\frac{1}{2}[(3+3\cos^{2}\alpha)k_{1}-2k_{2}]|H|^{2}\\
&+\frac{1}{8}(23+6\sin^{2}\alpha)(k_{2}-k_{1})|H|^{2}+2|H|^{2}|A|^{2}\\
=&-2|\nabla H|^{2}+[\frac{5}{8}(17\lambda-13)-\frac{3}{4}(\lambda+1)\cos^{2}\alpha]k_{1}|H|^{2}+2|H|^{2}|A|^{2}.
\end{aligned}
\end{eqnarray*}
We have used $P_{3}\leq |H|^{2}|A|^{2}$ in the above equation.

Denote $C=\frac{5}{8}(17\lambda-13)-\frac{3}{4}(\lambda+1)\delta$. Since $\cos\alpha\geq \sqrt{\delta}>0$ and the pinching condition, we have
\begin{eqnarray*}
\begin{aligned}
&(\frac{\partial}{\partial t}-\Delta)|H|^{2}\\
\leq &-2|\nabla H|^{2}+Ck_{1}|H|^{2}+2|H|^{2}(\sigma |H|^{2}+bk_{1})\\
=&-2|\nabla H|^{2}+(C+2b)k_{1}|H|^{2}+2\sigma |H|^{4}.
\end{aligned}
\end{eqnarray*}
Putting the above inequality into the evolution of $|H|^{2}f(\frac{1}{\cos\alpha})$, we get that
\begin{eqnarray}
\begin{aligned}
&(\frac{\partial}{\partial t}-\Delta)(|H|^{2}f(\frac{1}{\cos\alpha}))\\
\leq & f(\frac{1}{\cos\alpha})(-2|\nabla H|^{2}+(C+2b)k_{1}|H|^{2}+2\sigma |H|^{4})\\
&-|H|^{2}(f'\frac{|\overline{\nabla}J_{\Sigma_{t}}|^{2}}{\cos\alpha}+2f'\frac{|\nabla \cos\alpha|^{2}}{\cos^{3}\alpha}+f''\frac{|\nabla\cos\alpha|^{2}}{\cos^{4}\alpha})\\
&-2\frac{\nabla(f|H|^{2})-|H|^{2}\nabla f}{f}\cdot \nabla f(\frac{1}{\cos\alpha})\\
=&(C+2b)k_{1}f|H|^{2}+|H|^{2}f(\frac{1}{\cos\alpha})(-2\frac{|\nabla H|^{2}}{|H|^{2}}+2\sigma |H|^{2}-\frac{f'}{f}\frac{|\overline{\nabla}J_{\Sigma_{t}}|^{2}}{\cos\alpha})\\
&+|H|^{2}(-f''+2\frac{(f')^{2}}{f}-2f'\cos\alpha)\frac{|\nabla \cos\alpha|^{2}}{\cos^{4}\alpha}\\
&-2|H|^{2}\frac{\nabla(f|H|^{2})}{f|H|^{2}}\cdot \nabla f(\frac{1}{\cos\alpha})\end{aligned}
\end{eqnarray}
Set $\phi=f|H|^{2}$. At the point where $\phi\neq 0$, it is easy to see that
\begin{equation*}
\nabla \phi=f\nabla|H|^{2}+|H|^{2}\nabla f=f\nabla|H|^{2}-|H|^{2}f'\frac{\nabla\cos\alpha}{\cos^{2}\alpha},
\end{equation*}
i.e.,
\begin{equation}
\frac{\nabla\cos\alpha}{\cos^{2}\alpha}=\frac{f}{f'}(\frac{\nabla|H|^{2}}{|H|^{2}}-\frac{\nabla\phi}{\phi}).
\end{equation}
Plugging (52) into (51), we obtain
\begin{eqnarray}
\begin{aligned}
&(\frac{\partial}{\partial t}-\Delta)\phi\\
\leq &(C+2b)k_{1}\phi+\phi(-2\frac{|\nabla H|^{2}}{|H|^{2}}+2\sigma|H|^{2}-\frac{f'}{f}\frac{\overline{\nabla}J_{\Sigma_{t}}|^{2}}{\cos\alpha})\\
&+\frac{\phi f}{(f')^{2}}(-f''+2\frac{(f')^{2}}{f}-2f'\cos\alpha)(\frac{|\nabla|H|^{2}|^{2}}{|H|^{4}}-2\frac{\nabla|H|^{2}}{|H|^{2}}\cdot \frac{\nabla \phi}{\phi}+\frac{|\nabla\phi|^{2}}{\phi^{2}})\\
&+2|H|^{2}f'\frac{\nabla\phi}{\phi}\cdot\frac{\nabla\cos\alpha}{\cos^{2}\alpha}\\
\leq&(C+2b)k_{1}k_{1}\phi+\phi(-\frac{f'}{2f}\frac{|H|^{2}}{\cos\alpha}+2\sigma|H|^{2})\\
&+\phi(-2\frac{|\nabla H|^{2}}{|H|^{2}}-4\frac{ff''}{(f')^{2}}\frac{|\nabla|H||^{2}}{|H|^{2}}+8\frac{|\nabla|H||^{2}}{|H|^{2}}-8\frac{f|\nabla|H||\cos\alpha}{f'|H|^{2}})\\
&+2|H|^{2}f'\frac{\nabla\phi}{\phi}\cdot\frac{\nabla\cos\alpha}{\cos^{2}\alpha}+\phi(-\frac{ff''}{(f')^{2}}-2\frac{f}{f'}\cos\alpha+2)(\frac{|\nabla\phi|^{2}}{\phi^{2}}-2\frac{\nabla|H|^{2}}{|H|^{2}}\cdot\frac{\nabla\phi}{\phi})\\
=&(C+2b)k_{1}\phi+\phi(-\frac{f'}{2f\cos\alpha}+2\sigma)|H|^{2}+\phi(-4\frac{ff''}{(f')^{2}}-8\frac{f\cos\alpha}{f'}+6)\frac{|\nabla|H||^{2}}{|H|^{2}}\\
&+\phi(-\frac{ff''}{(f')^{2}}-2\frac{f}{f'}\cos\alpha+2)(\frac{|\nabla\phi|^{2}}{\phi^{2}}-2\frac{\nabla|H|^{2}}{|H|^{2}}\cdot\frac{\nabla\phi}{\phi})+2|H|^{2}f'\frac{\nabla\phi}{\phi}\cdot\frac{\nabla\cos\alpha}{\cos^{2}\alpha}
\end{aligned}
\end{eqnarray}
Set $\frac{f}{f'}=g$, we choose $g$ such that for $x\in[1,\frac{1}{\sqrt{\delta}}]$
\[\begin{cases}
x/g\geq 4\sigma,\\
-4g'+8g/x-2=0.
\end{cases}\]
We choose $c(x)=\frac{1}{2}-ax$ by solving the last equation, where $a$ will be defined later. It reduces to solve the inequality
\[0<\frac{1}{2}-ax\leq \frac{1}{4\sigma},\quad x\in[1,\frac{1}{\sqrt{\delta}}],\]
i.e.,
\[(\frac{1}{2}-\frac{1}{4\sigma})\frac{1}{x}\leq a<\frac{1}{2x},\quad x\in[1,\frac{1}{\sqrt{\delta}}].\]
Note that
\[\cos^{2}\alpha\geq \frac{13\lambda-10}{3(\lambda+2)}\geq \frac{1}{3}>(1-\frac{1}{2\sigma})^{2}\]
for any $\sigma \in (\frac{1}{2}, \frac{2}{3}]$.

Hence we can choose $a=\frac{1}{2}-\frac{1}{4\sigma}$, then
\[g=\frac{x}{2}-(\frac{1}{2}-\frac{1}{4\sigma})x^{2},\]
and
\[f(x)=\frac{(1-2a)^{2}x^{2}}{(1-2ax)^{2}}=\frac{x^{2}}{(2\sigma-(2\sigma-1)x)^{2}}, \quad x\in [1,\frac{1}{\sqrt{\delta}}].\]
Then for any $x\in[1,\frac{1}{\sqrt{\delta}}]$,
\[1\leq f(x)\leq \frac{1}{(2\sigma \sqrt{\delta}-(2\sigma-1))^{2}}.\]
By (44), we have
\begin{eqnarray}
\begin{aligned}
&(\frac{\partial}{\partial t}-\Delta)\phi\\
\leq&(C+2b)k_{1}\phi+\phi(-\frac{ff''}{(f')^{2}}-2\frac{f}{f'}\cos\alpha+2)(\frac{|\nabla\phi|^{2}}{\phi^{2}}-2\frac{\nabla|H|^{2}}{|H|^{2}}\cdot\frac{\nabla\phi}{\phi})\\
&+2|H|^{2}f'\frac{\nabla\phi}{\phi}\cdot\frac{\nabla\cos\alpha}{\cos^{2}\alpha}
\end{aligned}
\end{eqnarray}
This implies that
\[\frac{\partial}{\partial t}\phi_{max}(t)\leq (C+2b)k_{1}\phi_{max}(t),\]
where $\phi_{max}(t)$ mean the maximum of $\phi$ on $\Sigma_{t}$. Hence
\[|H|^{2}(t)\leq \phi_{max}(t)\leq e^{(C+2b)k_{1}t}|H|^{2}(0)f(\frac{1}{\cos\alpha})(0).\]
We have
\[|H|^{2}(t)\leq C_{0}e^{C_{1}t},\]
where $C_{0}$ depends only on $\max_{\Sigma_{0}}|H|^{2}, \lambda$ and $\sigma$. Pinching inequality implies that
\[|A|^{2}\leq C_{2}e^{C_{1}t}+bk_{1}.\]
We finish the proof of the theorem.
\end{proof}
\begin{theorem}
Under the assumption of Theorem \ref{Thm1.3}, the symplectic mean curvature flow converges to a holomorphic curve at infinity.
\end{theorem}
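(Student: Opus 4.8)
The plan is to turn the a~priori control of the K\"ahler angle provided by Section~3 into genuine convergence of the surfaces, using the exponential decay estimates of Section~4 together with White's local regularity theorem; the one new feature is that we do not have the smallness hypothesis $C_{0}\le\epsilon_{1}$ of Section~4 and must recover an analogue of it by restarting the flow at a late time. First I would record the consequences of the hypotheses of Theorem~\ref{Thm1.3}: the proof of the theorem of Section~5 shows that $|A|^{2}\le\sigma|H|^{2}+bk_{1}$ and $\cos\alpha\ge\sqrt{\delta}$ are preserved, and that
$$\frac{29(\lambda-1)}{\sqrt{(48-24\lambda)^{2}+(29\lambda-29)^{2}}}<\frac{\sqrt{3}}{3}\le\sqrt{\delta},$$
so Theorem~3.1 applies and gives $(\tfrac{\partial}{\partial t}-\Delta)\cos\alpha\ge|\overline{\nabla}J_{\Sigma_{t}}|^{2}\cos\alpha+C\sin^{2}\alpha$ for a fixed constant $C>0$; also, by Theorem~6.1 the flow exists for all $t\in[0,\infty)$. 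Writing $w=1-\cos\alpha\ge0$ and using $\sin^{2}\alpha=(1-\cos\alpha)(1+\cos\alpha)\ge w$ together with $|\overline{\nabla}J_{\Sigma_{t}}|^{2}\cos\alpha\ge0$, one gets $(\tfrac{\partial}{\partial t}-\Delta)w\le -Cw$, whence the maximum principle yields $\max_{\Sigma_{t}}(1-\cos\alpha)\le e^{-Ct}\max_{\Sigma_{0}}(1-\cos\alpha)$. Thus $\cos\alpha\to1$ uniformly and exponentially along the flow.

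Next I would import the integral estimates of Section~4 verbatim. Since $\sqrt{\delta}\ge\sqrt{3}/3$ exceeds the thresholds appearing in Propositions~4.1 and~4.2 for $1\le\lambda\le1+\tfrac1{200}$, those propositions apply; in particular, for every $T\ge0$, setting $C_{0}(T):=\int_{\Sigma_{T}}\frac{\sin^{2}\alpha}{\cos\alpha}d\mu_{T}$,
$$\int_{T}^{\infty}\int_{\Sigma_{t}}|H|\,d\mu_{t}\,dt\ \le\ \big(C_{0}(T)\big)^{1/2}\,\frac{\mathrm{Area}(\Sigma_{0})^{1/2}}{1-e^{-\frac{3}{8}(2-\lambda)k_{1}}},$$
and also $\int_{0}^{\infty}\int_{\Sigma_{t}}|H|^{2}d\mu_{t}\,dt\le C_{0}(0)<\infty$. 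Moreover $C_{0}(T)\le\delta^{-1/2}\,\big(\sup_{\Sigma_{T}}\sin^{2}\alpha\big)\,\mathrm{Area}(\Sigma_{0})\to0$ as $T\to\infty$ by the uniform convergence $\cos\alpha\to1$ of the previous paragraph; in particular the space-time tail $\int_{T}^{\infty}\int_{\Sigma_{t}}|H|\,d\mu_{t}\,dt$ can be made as small as we wish by taking $T$ large.

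To conclude the convergence I would restart the argument of Theorem~4.2 at a late time $T$. Since $\Sigma_{T}$ is a smooth compact surface in the fixed compact manifold $M$, there is a scale $r=r_{T}>0$ with $\Phi(X_{0},r^{2},0)\le1+\tfrac{\epsilon_{0}}{2}$ for every $X_{0}\in M$, where $\Phi$ is Huisken's density for the flow restarted at time $T$ (the construction of Remark~4.1.1 applied to $\Sigma_{T}$). Repeating the monotonicity computation in the proof of Theorem~4.2 from time $T$, the accumulated error is bounded by a fixed multiple of $r^{-3}\int_{T}^{\infty}\int_{\Sigma_{t}}|H|\,d\mu_{t}\,dt$, which is $<\tfrac{\epsilon_{0}}{2}$ once $T$ is large enough; hence $\Phi(X_{0},t,t-r^{2})\le1+\epsilon_{0}$ for all $X_{0}$ and all $t\ge T+r^{2}$, and White's theorem (Theorem~4.1 in the present paper) yields a uniform bound $|A|\le C$ on $\Sigma_{t}$ for all $t\ge T+r^{2}$. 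With this bound and the long-time existence, standard interior estimates for the mean curvature flow show that $\Sigma_{t}$ converges smoothly to a limit surface $\Sigma_{\infty}$ (first along a sequence $t_{i}\to\infty$, then for all $t\to\infty$, using $\int_{0}^{\infty}\int|H|^{2}<\infty$ so that $H\equiv0$ on $\Sigma_{\infty}$). Passing to the limit in $\int_{\Sigma_{t}}\frac{\sin^{2}\alpha}{\cos\alpha}d\mu_{t}\to0$ forces $\int_{\Sigma_{\infty}}\frac{\sin^{2}\alpha_{\infty}}{\cos\alpha_{\infty}}d\mu_{\infty}=0$, i.e.\ $\cos\alpha_{\infty}\equiv1$, so $\Sigma_{\infty}$ is a holomorphic curve.

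I expect the last step to be the main obstacle. Unlike in Section~4, Theorem~6.1 only gives an exponentially growing bound on $|A|$, so White's local regularity theorem cannot be applied directly to the original flow; the way around it is exactly the exponential decay $\cos\alpha\to1$, which makes $C_{0}(T)$ — and hence the space-time tail of $|H|$ — arbitrarily small, after which one applies White's argument to the flow restarted at $\Sigma_{T}$. One must only check that the auxiliary scale $r_{T}$ and the smallness threshold needed at time $T$ depend solely on $M$, on $\lambda$ and $k_{1}$, and on $\mathrm{Area}(\Sigma_{0})\ge\mathrm{Area}(\Sigma_{T})$, so that a single sufficiently large choice of $T$ works.
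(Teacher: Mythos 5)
Your plan diverges from the paper's actual proof of this theorem, and there is a genuine gap in the step you yourself flag as the main obstacle.

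The paper's proof does \emph{not} invoke White's theorem here. After establishing the exponential decay $\sin^{2}(\alpha/2)\le e^{-\frac{4}{9}k_{1}t}$ (your first paragraph proves essentially the same thing, with $1-\cos\alpha$ in place of $\sin^{2}(\alpha/2)$), it uses the preserved pinching $|A|^{2}\le\sigma|H|^{2}+bk_{1}$ together with $(\tfrac{\partial}{\partial t}-\Delta)\cos\alpha\ge\frac12|H|^{2}\cos\alpha$ and $|\nabla\cos\alpha|^{2}\le\sin^{2}\alpha\,|\overline{\nabla}J_{\Sigma_{t}}|^{2}\le 4\epsilon|A|^{2}$ to run a \emph{direct maximum--principle argument} on the quantity $|A|^{2}/e^{p\cos\alpha}$ with $p=\epsilon^{-1/2}$. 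For $t$ large enough that $\epsilon$ is small, the coefficient of $|A|^{4}$ in the evolution becomes strictly negative, which gives a uniform bound on $|A|$; smooth convergence follows, and the decay of $\sin^{2}(\alpha/2)$ forces the limit to be holomorphic. This is self-contained (no local regularity theorem) and, crucially, it \emph{produces} the uniform curvature bound rather than presupposing control at small scales.

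The gap in your restart argument is precisely the point you defer to at the end. When you restart the monotonicity computation at time $T$, the scale $r=r_{T}$ at which $\Phi(X_{0},T+r^{2},T)\le 1+\tfrac{\epsilon_{0}}{2}$ is achieved depends on the second fundamental form of $\Sigma_{T}$, not merely on $M$, $\lambda$, $k_{1}$ and $\mathrm{Area}(\Sigma_{0})$. The only a~priori control on $|A|$ available at this stage is the exponentially \emph{growing} bound $|A|^{2}\le C_{2}e^{C_{1}t}+bk_{1}$ from the long-time existence theorem, so $r_{T}$ could in principle shrink like $e^{-C_{1}T/2}$. Your error term scales like $r_{T}^{-3}\int_{T}^{\infty}\int|H|$, where the tail decays like $e^{-\frac{3}{8}(2-\lambda)k_{1}T}$ (or at the rate from the exponential decay of $1-\cos\alpha$). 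There is no comparison in the paper between the growth rate $C_{1}$ and the decay rate, so one cannot conclude that a single sufficiently large $T$ makes the error $<\tfrac{\epsilon_{0}}{2}$. The assertion ``one must only check that $r_{T}$ depends solely on $M,\lambda,k_{1},\mathrm{Area}(\Sigma_{0})$'' is exactly what fails: making $r_{T}$ uniform in $T$ is equivalent to a uniform curvature bound, which is the content you are trying to obtain from White's theorem in the first place. (The paper remarks that the theorem ``can follows from Corollary 4.2.1,'' which is the route you take, but that corollary's own proof is a one-line pointer back to Theorem 4.2 and does not address this circularity either; the argument the paper actually carries out is the maximum-principle one from [HLY].)

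To repair your argument you would either need to produce a $T$-independent scale $r$ — which requires a curvature bound and so is circular — or replace the White-theorem step by the paper's pinching-based maximum-principle estimate on $|A|^{2}/e^{p\cos\alpha}$, which is exactly designed to turn the decay $\cos\alpha\to 1$ plus the preserved pinching into a uniform bound on $|A|$.
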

In fact, this theorem can follows from Corollary 4.2.1. But here we also provide the argument from [HLY].
\begin{proof}
Since $\cos^{2}\alpha\geq\frac{1}{3}$ and $\lambda\leq 1+\frac{1}{200}$ and (32), we know $Ric(Je_{1},e_{2})>\frac{1}{3}k_{1}$. By the evolution equation of $\cos\alpha$
\[(\frac{\partial}{\partial t}-\Delta)\cos\alpha=|\overline{\nabla}J_{\Sigma_{t}}|^{2}\cos\alpha+Ric(Je_{1},e_{2})\sin^{2}\alpha,\]
we have
\[(\frac{\partial}{\partial t}-\Delta)\cos\alpha\geq \frac{1}{3}k_{1}\sin^{2}\alpha.\]
Rewrite the above inequality as
\[(\frac{\partial}{\partial t}-\Delta)\sin^{2}(\alpha/2)\leq -\frac{4}{9}k_{1}\sin^{2}(\alpha/2).\]
Applying the maximum principle, we get that $\sin^{2}(\alpha/2)\leq e^{-\frac{4}{9}k_{1}t}.$ By Theorem 6.1 we know that the symplectic mean curvature flow exists for long time. Thus for any $\epsilon>0$, there exists $T>0$ such that as $t>T$, we have
\begin{eqnarray}
\begin{aligned}
\cos\alpha&\geq 1-\epsilon,\\
\sin\alpha&\leq 2\epsilon,\\
|\nabla \cos\alpha|^{2}&\leq \sin^{2}\alpha|\overline{\nabla}J_{\Sigma_{t}}|^{2}\leq 2\epsilon |\overline{\nabla}J_{\Sigma_{t}}|^{2}\leq 4\epsilon |A|^{2}.
\end{aligned}
\end{eqnarray}
Therefore by pinching inequality,
\begin{eqnarray}
\begin{aligned}
(\frac{\partial}{\partial t}-\Delta)\cos\alpha\geq \frac{1}{2}|H|^{2}\cos\alpha&\geq(\frac{1}{2\sigma}|A|^{2}-\frac{bk_{1}}{2\sigma})\cos\alpha\\
&\geq \frac{1}{2\sigma}(1-\epsilon)|A|^{2}-\frac{bk_{1}}{2\sigma}.
\end{aligned}
\end{eqnarray}
From the evolution equation of $|A|^{2}$, we have
\[(\frac{\partial}{\partial t}-\Delta)|A|^{2}\leq -2|\nabla A|^{2}+C_{1}|A|^{4}+C_{2}|A|^{2}+C_{3},\]
where $C_{1},C_{2},C_{3}$ are positive constants that depend only on $k_{1}, \lambda, K$.

Let $p>1$ be a constant to be fixed later. For simplicity, we set $u=\cos\alpha$. Now we consider the function $\frac{|A|^{2}}{e^{pu}}.$
\begin{eqnarray*}
\begin{aligned}
&(\frac{\partial}{\partial t}-\Delta)\frac{|A|^{2}}{e^{pu}}\\
=&2\nabla(\frac{|A|^{2}}{e^{pu}})\cdot \frac{\nabla e^{pu}}{e^{pu}}+\frac{1}{e^{2pu}}[e^{pu}(\frac{\partial}{\partial t}-\Delta)|A|^{2}-|A|^{2}(\frac{\partial}{\partial t}-\Delta)e^{pu}]\\
\leq&2p\nabla(\frac{|A|^{2}}{e^{pu}})\cdot\nabla u+\frac{1}{e^{2pu}}\{e^{pu}(C_{1}|A|^{4}+C_{2}|A|^{2}+C_{3})-p|A|^{2}e^{pu}[\frac{1}{2\sigma}(1-\epsilon)|A|^{2}-\frac{bk_{1}}{2\sigma}-p|\nabla u|^{2}]\}.
\end{aligned}
\end{eqnarray*}

Using (52) we obtain that,
\[(\frac{\partial}{\partial t}-\Delta)\frac{|A|^{2}}{e^{pu}}\leq 2p\nabla(\frac{|A|^{2}}{e^{pu}})\cdot \nabla u+\frac{1}{e^{pu}}[(C_{1}-\frac{1}{2\sigma}p(1-\epsilon)+4p^{2}\epsilon)|A|^{4}+C_{4}|A|^{2}+C_{3}]\]
Set $p^{2}=\frac{1}{\epsilon}$, then
\[C_{1}-\frac{1}{2\sigma}p(1-\epsilon)+4p^{2}\epsilon=C_{1}-\frac{1}{2\sigma}\epsilon^{-\frac{1}{2}}+\frac{1}{2\sigma}\epsilon^{\frac{1}{2}}+4.\]
As $t$ is sufficiently large, i.e., $\epsilon$ is sufficiently close to $0$, we have
\[C_{1}-\frac{1}{2\sigma}\epsilon^{-\frac{1}{2}}+\frac{1}{2\sigma}\epsilon^{\frac{1}{2}}+4\leq -1.\]
Hence
\[(\frac{\partial}{\partial t}-\Delta)\frac{|A|^{2}}{e^{pu}}\leq 2p\nabla(\frac{|A|^{2}}{e^{pu}})\cdot \nabla u-\frac{|A|^{4}}{e^{2pu}}+\frac{C_{4}|A|^{2}}{e^{pu}}+\frac{C_{3}}{e^{pu}}.\]
Applying the maximum principle, we conclude that $\frac{|A|^{2}}{e^{pu}}$ is uniformly bounded, thus $|A|^{2}$ is also uniformly bounded. Thus $F(\cdot,t)$ converges to $F_{\infty}$ in $C^{2}$ as $t\rightarrow \infty$. Since $\sin^{2}(\frac{\alpha}{2})\leq e^{-\frac{4}{9}k_{1}t}$, we have $\cos\alpha \equiv 1$ at infinity. Thus the limiting surface $F_{\infty}$ is a holomorphic curve.
\end{proof}
\section*{Acknowledgments}
The author is partially supported by NSFC No. 11301017, Research Fund for the Doctoral Program of Higher Education of China and the Fundamental Research Funds for the Central Universities. He thanks Professor Jiayu Li, Xiaoli Han, Jun Sun and Liuqing Yang for many useful discussions.

\end{document}